\numberwithin{equation}{section}
\theoremstyle{plain} 
\newtheorem{theorem}{Theorem}[section]
\newtheorem{lemma}[theorem]{Lemma}
\newtheorem{corollary}[theorem]{Corollary}
\theoremstyle{remark} 
\newtheorem*{remark}{Remark}
\theoremstyle{definition}
\newtheorem*{definition}{Definition}
\newtheorem{problem}{Problem}
\DeclareMathOperator{\mre}{Re}
\DeclareMathOperator{\mim}{Im}
\DeclareMathOperator{\Arg}{Arg}
\begin{document} 
\title[A mean counting function and compact composition operators]{A mean counting function for Dirichlet series and compact composition operators}
\date{\today} 

\author{Ole Fredrik Brevig}
\address{Department of Mathematics, University of Oslo, 0851 Oslo, Norway} 
\email{obrevig@math.uio.no}

\author{Karl-Mikael Perfekt}
\address{Department of Mathematics and Statistics, University of Reading, Reading RG6 6AX, United Kingdom}
\email{k.perfekt@reading.ac.uk}

\begin{abstract}
	We introduce a mean counting function for Dirichlet series, which plays the same role in the function theory of Hardy spaces of Dirichlet series as the Nevanlinna counting function does in the classical theory. The existence of the mean counting function is related to Jessen and Tornehave's resolution of the Lagrange mean motion problem. We use the mean counting function to describe all compact composition operators with Dirichlet series symbols on the Hardy--Hilbert space of Dirichlet series, thus resolving a problem which has been open since the bounded composition operators were described by Gordon and Hedenmalm. The main result is that such a composition operator is compact if and only if the mean counting function of its symbol satisfies a decay condition at the boundary of a half-plane.
\end{abstract}

\subjclass[2010]{Primary 42A75. Secondary 47B33, 30H10.}

\maketitle

\section{Introduction}
Let $\mathscr{H}^2$ denote the Hardy space of Dirichlet series $f(s)=\sum_{n\geq1} a_n n^{-s}$ with square summable coefficients, 
\[\|f\|_{\mathscr{H}^2}^2 = \sum_{n=1}^\infty |a_n|^2 < \infty.\]
By the Cauchy--Schwarz inequality, it is easily verified that $\mathscr{H}^2$ is a space of analytic functions in the half-plane $\mathbb{C}_{1/2}$, where $\mathbb{C}_\theta = \left\{s \in \mathbb{C}\,:\,\mre{s}>\theta\right\}$. 

Therefore, if $\varphi\colon\mathbb{C}_{1/2}\to\mathbb{C}_{1/2}$ is an analytic function, the composition $\mathscr{C}_\varphi f = f\circ\varphi$ defines an analytic function in $\mathbb{C}_{1/2}$ for every $f\in\mathscr{H}^2$. However, not every \emph{symbol} $\varphi$ yields a bounded composition operator $\mathscr{C}_\varphi \colon \mathscr{H}^2 \to \mathscr{H}^2$. In their seminal paper \cite{GH99}, Gordon and Hedenmalm demonstrated that $\mathscr{C}_\varphi$ defines a bounded composition operator on $\mathscr{H}^2$ if and only if $\varphi$ belongs to the Gordon--Hedenmalm class $\mathscr{G}$.

\begin{definition}
	The Gordon--Hedenmalm class, denoted $\mathscr{G}$, consists of the functions $\varphi\colon \mathbb{C}_{1/2}\to\mathbb{C}_{1/2}$ of the form
	\[\varphi(s) = c_0 s + \sum_{n=1}^\infty c_n n^{-s} = c_0s + \varphi_0(s),\]
	where $c_0$ is a non-negative integer and the Dirichlet series $\varphi_0$ converges uniformly in $\mathbb{C}_\varepsilon$ for every $\varepsilon>0$ and satisfies the following mapping properties:
	\begin{enumerate}
		\item[(a)] If $c_0=0$, then $\varphi_0(\mathbb{C}_0)\subseteq \mathbb{C}_{1/2}$.
		\item[(b)] If $c_0\geq1$, then either $\varphi_0(\mathbb{C}_0)\subseteq \mathbb{C}_0$ or $\varphi_0\equiv0$.
 	\end{enumerate}
\end{definition}

Ever since \cite{GH99} appeared in 1999, one of the field's central open problems has been to classify the symbols generating compact composition operators on $\mathscr{H}^2$. We refer to the surveys \cite[Prob.~4]{Hedenmalm04} and \cite[Prob.~3.3]{SS16} for a discussion of the problem and to the papers \cite{Bailleul15,Bayart02,Bayart03,BB17,FQV04,QS15} for some partial results. It is important to know that the two cases of the Gordon--Hedenmalm class are fundamentally different. In particular, the associated symbols feature almost periodic behavior only in case (a). The notion of almost periodicity will play an essential role in this paper.

Up to now, the problem has been open in both cases (a) and (b), although the latter case has seen much more progress (which we will briefly review in Section~\ref{subsec:b}). In case (a), the only non-trivial results have pertained to specific polynomial symbols \cite{Bayart03,BB17,FQV04,QS15} and periodic symbols \cite{QS15}. The main purpose of the present paper is to completely resolve the compactness problem in case (a). To simplify certain statements below, we will therefore let $\mathscr{G}_0$ denote subclass of $\mathscr{G}$ where $c_0=0$.

We have two sources of inspiration. The first is the description of compact composition operators on the Hardy space of the unit disc $\mathbb{D}=\{z\in\mathbb{C}\,:\,|z|<1\}$ due to Shapiro \cite{Shapiro87}. Recall in this setting that Littlewood's subordination principle \cite{Littlewood25} implies that any analytic function $\phi \colon \mathbb{D} \to \mathbb{D}$ generates a bounded composition operator on $H^2(\mathbb{D})$. For $\xi\neq\phi(0)$, consider the \emph{Nevanlinna counting function}
\begin{equation} \label{eq:nevanlinnadisc}
	N_\phi(\xi) = \sum_{z \in \phi^{-1}(\{\xi\})} \log{\frac{1}{|z|}}.
\end{equation}
Shapiro's result states that the composition operator generated by $\phi$ on $H^2(\mathbb{D})$ is compact if and only if
\begin{equation} \label{eq:compactdisc}
	\lim_{|\xi| \to 1^{-}} \frac{N_\phi(\xi)}{\log\frac{1}{|\xi|}} = 0.
\end{equation}
In this context, we recall the Littlewood inequality: for an analytic function $\phi\colon\mathbb{D}\to\mathbb{D}$ and every $\xi\in \mathbb{D}\setminus\{\phi(0)\}$,
\begin{equation} \label{eq:littlewoodineq}
	N_\phi(\xi) \leq \log\left|\frac{1-\overline{\xi}\phi(0)}{\xi-\phi(0)}\right|.
\end{equation}
Consequently, the limiting quantity in \eqref{eq:compactdisc} is bounded for every analytic function $\phi\colon\mathbb{D}\to\mathbb{D}$. This observation yields one way to prove the boundedness of the composition operator generated by $\phi$, via the Stanton formula, which we shall provide an analogue of in Theorem~\ref{thm:cov}. We remark also that by the maximum principle, \eqref{eq:compactdisc} is a statement about the behavior of $\phi$ near the boundary of $\mathbb{D}$.

Our second source of inspiration is Jessen and Tornehave's \cite{JT45} extraordinary resolution of the Lagrange mean motion problem, outlined in Section~\ref{sec:JT}. Let $f \not \equiv 0$ be a Dirichlet series which converges uniformly in $\mathbb{C}_\varepsilon$ for every $\varepsilon > 0$. Then $f$ is \emph{almost periodic} in every such half-plane, and therefore the equation $f(s) = 0$ has either $0$ or an infinite number of solutions $s\in\mathbb{C}_0$. Jessen and Tornehave proved that the \emph{unweighted mean counting function}
\[\mathscr{Z}_f(\sigma) = \lim_{T \to \infty} \frac{1}{2T} \sum_{\substack{s \in f^{-1}(\{0\}) \\ \,\,|\mim{s}|<T \\ \sigma<\mre{s} < \infty}} 1 \]
exists for every $\sigma > 0$. Furthermore, if $f(+\infty) \neq 0$, then $\mathscr{Z}_f$ coincides with the right-derivative of the \emph{Jessen function}
\[\mathscr{J}_f(\sigma) = \lim_{T \to \infty} \frac{1}{2T} \int_{-T}^T \log |f(\sigma+it)| \, dt.\]
The convexity of $\mathscr{J}_f$, previously demonstrated by Jessen \cite{Jessen33}, plays an important role, in particular guaranteeing that the the right-derivative exists. 

Let $f$ be a Dirichlet series which converges uniformly in $\mathbb{C}_\varepsilon$ for every $\varepsilon>0$. As an analogue of the Nevanlinna counting function \eqref{eq:nevanlinnadisc}, we introduce for $w \neq f(+\infty)$ the \emph{mean counting function}
\begin{equation} \label{eq:meancounting}
	\mathscr{M}_f(w) = \lim_{\sigma\to 0^+}\lim_{T\to\infty} \frac{\pi}{T} \sum_{\substack{s \in f^{-1}(\{w\}) \\ \,\,|\mim{s}|<T \\ \sigma<\mre{s}<\infty}} \mre{s}.
\end{equation}
Through Littlewood's lemma \cite{Littlewood24}, we can relate the mean counting function \eqref{eq:meancounting} to the Jessen function and the unweighted mean counting function. In particular, if $f$ satisfies a Nevanlinna--type condition, we will in Theorem~\ref{thm:Mexistgeneral} show that
\begin{equation} \label{eq:countingJintro}
	\mathscr{M}_f(w) = \lim_{\sigma \to 0^+}  \mathscr{J}_{f- w}(\sigma) - \log |f(+\infty) - w|.
\end{equation}
Exploiting the mapping properties of a symbol $\varphi \in \mathscr{G}_0$, equation \eqref{eq:countingJintro} will lead us to our first main result.
\begin{theorem} \label{thm:mean}
	Suppose that $\varphi \in \mathscr{G}_0$. The mean counting function $\mathscr{M}_\varphi$ exists for every $w \in \mathbb{C}_{1/2}\setminus\{\varphi(+\infty)\}$ and enjoys the point-wise estimate  
	\begin{equation} \label{eq:littlewoodineqM}
		\mathscr{M}_\varphi(w) \leq \log\left|\frac{\overline{w}+\varphi(+\infty)-1}{w-\varphi(+\infty)}\right|.
	\end{equation}
\end{theorem}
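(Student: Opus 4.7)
My plan is to use Theorem~\ref{thm:Mexistgeneral} (equation~\eqref{eq:countingJintro}) to reduce the problem to an upper bound on the Jessen function of $\varphi-w$, and then to obtain that bound by a subordination argument modelled on the disc proof of Littlewood's inequality~\eqref{eq:littlewoodineq}. Introduce the Möbius transformation
\[
B_w(z) = \frac{z-w}{z+\overline{w}-1},
\]
which maps $\mathbb{C}_{1/2}$ onto $\mathbb{D}$ and satisfies $B_w(w)=0$. Since $\varphi(\mathbb{C}_0)\subseteq\mathbb{C}_{1/2}$ for $\varphi\in\mathscr{G}_0$, the composition $B_w\circ\varphi$ maps $\mathbb{C}_0$ into $\mathbb{D}$, so $|B_w\circ\varphi|\leq 1$, which rearranges to the pointwise bound
\[
\log|\varphi(s)-w| \leq \log|\varphi(s)+\overline{w}-1|, \qquad s \in \mathbb{C}_0.
\]
Averaging over $|\mim s|<T$ at fixed $\mre s=\sigma>0$ and letting $T\to\infty$ then gives, by the existence of the Jessen function, the key inequality $\mathscr{J}_{\varphi-w}(\sigma)\leq\mathscr{J}_{\varphi+\overline{w}-1}(\sigma)$ for every $\sigma>0$.

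I would next identify the majorant as a constant. The inequality
\[
\mre(\varphi(s)+\overline{w}-1) > \tfrac{1}{2}+\mre w - 1 = \mre w - \tfrac{1}{2} > 0
\]
shows that $\varphi+\overline{w}-1$ is zero-free on $\mathbb{C}_0$, so $\mathscr{Z}_{\varphi+\overline{w}-1}\equiv 0$. The Jessen--Tornehave theorem from Section~\ref{sec:JT} identifies the right-derivative of $\mathscr{J}_{\varphi+\overline{w}-1}$ with this vanishing unweighted counting function; combined with convexity, $\mathscr{J}_{\varphi+\overline{w}-1}$ is therefore constant on $(0,\infty)$. The uniform convergence of $\varphi$ at $+\infty$ evaluates this constant as $\log|\varphi(+\infty)+\overline{w}-1|$. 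Putting the two steps together,
\[
\mathscr{J}_{\varphi-w}(\sigma) \leq \log|\varphi(+\infty)+\overline{w}-1| \qquad \text{for every } \sigma>0.
\]

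Jessen's convexity theorem ensures that $\mathscr{J}_{\varphi-w}$ is convex in $\sigma$; together with the uniform upper bound above and the limit $\mathscr{J}_{\varphi-w}(\sigma)\to\log|\varphi(+\infty)-w|$ as $\sigma\to+\infty$, convexity forces $\mathscr{J}_{\varphi-w}$ to be non-increasing on $(0,\infty)$, so $\lim_{\sigma\to 0^+}\mathscr{J}_{\varphi-w}(\sigma)$ exists (as the supremum) and still satisfies the same bound. Invoking~\eqref{eq:countingJintro} via Theorem~\ref{thm:Mexistgeneral} thus establishes both the existence of $\mathscr{M}_\varphi(w)$ and the inequality
\[
\mathscr{M}_\varphi(w) \leq \log|\varphi(+\infty)+\overline{w}-1| - \log|\varphi(+\infty)-w|,
\]
which coincides with~\eqref{eq:littlewoodineqM} since $|w-\varphi(+\infty)|=|\varphi(+\infty)-w|$.

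The step most requiring care is the verification that $\varphi-w$ satisfies the Nevanlinna-type hypothesis of Theorem~\ref{thm:Mexistgeneral}, enabling the use of~\eqref{eq:countingJintro}. The mapping property $\varphi(\mathbb{C}_0)\subseteq\mathbb{C}_{1/2}$ gives uniform boundedness of $\log^+|\varphi-w|$ in $t$, and the subordination inequality above supplies integral control of $\log^-|\varphi-w|$; together these should suffice. The passage from the pointwise inequality to the inequality between Jessen functions is routine, thanks to local integrability of $\log|\varphi-w|$ near the isolated zeros of $\varphi-w$.
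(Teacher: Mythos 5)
Your overall architecture is sound and runs parallel to the paper's: the paper applies Theorem~\ref{thm:Mexistgeneral} to the bounded auxiliary function $\Phi_w=\frac{w-\varphi}{\overline{w}+\varphi-1}$, for which the Nevanlinna hypothesis is trivial ($\Phi_w\in\mathscr{H}^\infty\subset\mathscr{N}_{\operatorname{u}}$) and \eqref{eq:littlewoodineqM} is immediate from $\mathscr{J}_{\Phi_w}\leq 0$; you instead apply it to $\varphi-w$ and recover the same bound by subordination against the zero-free majorant $\varphi+\overline{w}-1$, whose Jessen function you correctly identify as the constant $\log|\varphi(+\infty)+\overline{w}-1|$ via the Jessen--Tornehave relation and convexity. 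That part of the argument is correct.

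The gap is exactly at the step you flag: your verification that $\varphi-w\in\mathscr{N}_{\operatorname{u}}$ is wrong as written. The mapping property $\varphi(\mathbb{C}_0)\subseteq\mathbb{C}_{1/2}$ does \emph{not} give uniform boundedness of $\log^+|\varphi-w|$, because the target half-plane is unbounded; for instance $\varphi(s)=\frac{1}{2}+\frac{1+2^{-s}}{1-2^{-s}}$ belongs to $\mathscr{G}_0$ and is unbounded on $\mathbb{C}_0$. You have also reversed the roles of $\log^+$ and $\log^-$: the definition \eqref{eq:Nudef} of $\mathscr{N}_{\operatorname{u}}$ asks for control of the means of $\log^+$ only (Lemma~\ref{lem:Nu} then handles $\log^-$), and it is precisely the $\log^+$ means near $\sigma=0$ that require an argument. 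The paper supplies this through the Kolmogorov-type estimate of Lemma~\ref{lem:map0p1}, which shows $\varphi\in\mathscr{H}^p$ for $0<p<1$, so that $\log^+|\varphi-w|\leq|w|+p^{-1}|\varphi|^p$ has uniformly bounded means by \eqref{eq:carlsonp}. Alternatively, you can close the gap with your own tools: setting $g=\varphi+\overline{w}-1$, your subordination inequality gives $\log^+|\varphi-w|\leq\log^+|g|$, while the pointwise bound $|g|>\mre{w}-1/2$ together with $\mathscr{J}_g\equiv\log|g(+\infty)|$ and the identity $\log^+=\log+\log^-$ yields
\[\lim_{T\to\infty}\frac{1}{2T}\int_{-T}^{T}\log^+|g(\sigma+it)|\,dt\leq\mathscr{J}_g(\sigma)+\log^-\bigl(\mre{w}-\tfrac{1}{2}\bigr)=\log|g(+\infty)|+\log^-\bigl(\mre{w}-\tfrac{1}{2}\bigr),\]
uniformly in $\sigma>0$. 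With that repaired, the remainder of your proof goes through.
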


To note the analogy of \eqref{eq:littlewoodineqM} with the Littlewood inequality \eqref{eq:littlewoodineq}, we point out that the pseudo-hyperbolic distance in $\mathbb{C}_{1/2}$ between two points $w,\nu\in\mathbb{C}_{1/2}$ is given by
\[\varrho_{\mathbb{C}_{1/2}}(w,\nu) = \left|\frac{w-\nu}{\overline{w}+\nu-1}\right|,\]
while the pseudo-hyperbolic distance in $\mathbb{D}$ between two points $\xi, \nu \in \mathbb{D}$ is given by
\[\varrho_{\mathbb{D}}(\xi,\nu) = \left|\frac{\xi-\nu}{1-\overline{\xi}\nu}\right|.\]
Note also that for fixed $\nu=\varphi(+\infty)$ the upper bound of \eqref{eq:littlewoodineqM} decays like $(\mre{w}-1/2)$ when $\mre{w}\to 1/2^+$ and $\mim w$ is fixed, and like $|\mim{w}|^{-2}$ when $|\mim{w}|\to \infty$ and $\mre w$ is fixed. We refer to Lemma~\ref{lem:logest} for the precise behavior.

It is natural to next ask which $\varphi\in\mathscr{G}_0$ attain the upper bound \eqref{eq:littlewoodineqM}. Fix $\nu\in\mathbb{C}_{1/2}$ and consider 
\begin{equation} \label{eq:varphinu}
	\varphi_\nu(s) = \frac{\nu + (1-\nu) 2^{-s}}{1+2^{-s}}.
\end{equation}
Since this symbol is periodic, it is easy to establish that
\begin{equation} \label{eq:Mvarphinu}
	\mathscr{M}_{\varphi_\nu}(w) = \log\left|\frac{\overline{w}+\nu-1}{w-\nu}\right|
\end{equation}
for every $w \in \mathbb{C}_{1/2}\setminus\{\nu\}$ (see Section~\ref{sec:prelim}). In fact, we can completely classify the symbols $\varphi$ attaining equality in \eqref{eq:littlewoodineqM} for just one $w\in\mathbb{C}_{1/2}$. To state our result, recall that every $\varphi \in \mathscr{G}_0$ has \emph{generalized boundary values} $\varphi^\ast(\chi)$ for almost every $\chi \in \mathbb{T}^\infty$ (see \cite[Sec.~2]{BP20} or Corollary~\ref{cor:varphiboundary} below).  

\begin{theorem} \label{thm:sharpiro}
	Suppose that $\varphi$ is in $\mathscr{G}_0$ and let $\nu = \varphi(+\infty)\in\mathbb{C}_{1/2}$. The following are equivalent:
	\begin{enumerate}
		\item[(i)] $\mre{\varphi^\ast(\chi)}=1/2$ for almost every $\chi \in \mathbb{T}^\infty$.
		\item[(ii)] $\mathscr{M}_\varphi(w) = \log\big|\frac{\overline{w}+\nu-1}{w-\nu}\big|$ for quasi-every $w\in\mathbb{C}_{1/2}$. 
		\item[(iii)] $\mathscr{M}_\varphi(w) = \log\big|\frac{\overline{w}+\nu-1}{w-\nu}\big|$ for one $w\in\mathbb{C}_{1/2}$. 
	\end{enumerate}
\end{theorem}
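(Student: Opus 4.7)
The plan is to reformulate the equality in (ii)/(iii) as a boundary integral condition via the Möbius transformation from $\mathbb{C}_{1/2}$ to $\mathbb{D}$. For $w \in \mathbb{C}_{1/2}$, let $\psi_w(\zeta) = \frac{\zeta - w}{\zeta + \overline{w} - 1}$, a conformal map of $\mathbb{C}_{1/2}$ onto $\mathbb{D}$ sending $w$ to $0$, and set $g_w = \psi_w \circ \varphi$. Then $g_w \colon \mathbb{C}_0 \to \mathbb{D}$ is analytic, its zeros coincide with $\varphi^{-1}(\{w\})$, and its boundary values on $\mathbb{T}^\infty$ are $g_w^\ast = \psi_w \circ \varphi^\ast$. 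The Dirichlet series $\overline{w} + \varphi(s) - 1$ has strictly positive real part in $\mathbb{C}_0$ (since $\mre w > 1/2$ and $\mre \varphi > 1/2$), so $\log(\overline{w} + \varphi - 1)$ is a well-defined analytic branch on $\mathbb{C}_0$ whose constant Dirichlet coefficient is $\log(\overline{w} + \nu - 1)$. The almost-periodic mean extracts precisely this coefficient, yielding
\[
\lim_{\sigma \to 0^+} \mathscr{J}_{\overline{w} + \varphi - 1}(\sigma) = \log|\overline{w} + \nu - 1|.
\]
Combining this with \eqref{eq:countingJintro} applied to $f = \varphi - w$ gives the key decomposition
\[
\mathscr{M}_\varphi(w) = \log\left|\frac{\overline{w} + \nu - 1}{w - \nu}\right| + \lim_{\sigma \to 0^+} \mathscr{J}_{g_w}(\sigma).
\]

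Since $g_w$ is a bounded Dirichlet-series-type function on $\mathbb{C}_0$ with values in $\mathbb{D}$, Hardy space theory on $\mathbb{T}^\infty$ via the Bohr correspondence supplies the boundary integral representation
\[
\lim_{\sigma \to 0^+} \mathscr{J}_{g_w}(\sigma) = \int_{\mathbb{T}^\infty} \log|g_w^\ast(\chi)| \, dm(\chi) \leq 0,
\]
with equality to zero if and only if $|g_w^\ast| = 1$ almost everywhere on $\mathbb{T}^\infty$. By the conformality of $\psi_w$, the condition $|g_w^\ast(\chi)| = 1$ is equivalent to $\varphi^\ast(\chi) \in \partial \mathbb{C}_{1/2}$, i.e., to $\mre \varphi^\ast(\chi) = 1/2$, and this condition on $\varphi^\ast$ is evidently independent of $w$. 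The Littlewood-type inequality \eqref{eq:littlewoodineqM} is thus recovered, and equality in (iii) at a single $w$ becomes equivalent to the boundary condition in (i).

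With this in hand, (ii) $\Rightarrow$ (iii) is trivial. The implication (iii) $\Rightarrow$ (i) follows because single-$w$ equality forces the boundary integral to vanish, giving $\mre \varphi^\ast = 1/2$ almost everywhere. Conversely, (i) $\Rightarrow$ (ii) because $\mre \varphi^\ast = 1/2$ a.e.\ makes $|g_w^\ast| = 1$ a.e.\ for every $w \in \mathbb{C}_{1/2} \setminus \{\nu\}$, so the equality in fact holds at every admissible $w$; the "quasi-every" formulation only accommodates the possible exceptional set on which \eqref{eq:countingJintro} or the boundary integral identity might require exclusion. The principal technical hurdle is rigorously establishing the identity $\lim_{\sigma \to 0^+} \mathscr{J}_{g_w}(\sigma) = \int_{\mathbb{T}^\infty} \log|g_w^\ast| \, dm$: although $g_w$ is bounded by $1$, it arises as a composition $\psi_w \circ \varphi$ rather than as a Dirichlet series with directly controlled coefficients, so verifying that $g_w$ lifts to an $H^\infty(\mathbb{T}^\infty)$ function with well-behaved boundary trace---likely via a convergent Dirichlet series expansion of $1/(\overline{w} + \varphi - 1)$ in a suitable half-plane, combined with the existing boundary value theory for $\varphi^\ast$---is the main work needed to close the argument.
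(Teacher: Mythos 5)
Your setup is the same as the paper's: you derive the decomposition
\begin{equation*}
\mathscr{M}_\varphi(w) = \log\left|\frac{\overline{w}+\nu-1}{w-\nu}\right| + \lim_{\sigma\to0^+}\int_{\mathbb{T}^\infty}\log|(\Phi_w)_\chi(\sigma)|\,dm_\infty(\chi),
\end{equation*}
and your argument for (iii) $\Rightarrow$ (i) (Fatou/upper semicontinuity forces $|\Phi_w^\ast|=1$ a.e.) is exactly the paper's. The genuine gap is in (i) $\Rightarrow$ (ii). You assert the identity $\lim_{\sigma\to0^+}\mathscr{J}_{g_w}(\sigma)=\int_{\mathbb{T}^\infty}\log|g_w^\ast|\,dm_\infty$ for \emph{every} $w$, and you locate the difficulty in checking that $g_w$ has a well-behaved $H^\infty(\mathbb{D}^\infty)$ boundary trace. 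That part is routine (Bohr's theorem gives $\sigma_{\operatorname{u}}(\Phi_w)\le0$, and Lemma~\ref{lem:Hpintdown} handles the boundary values). The real obstruction is that the identity is simply \emph{false} in general: Theorem~\ref{thm:logintdown} only gives the one-sided inequality $\int\log|F|\,dm_\infty\ge\varlimsup_{\sigma\to0^+}\int\log|F_\sigma|\,dm_\infty$, and strict inequality can occur. Concretely, take $\varphi(s)=\psi(2^{-s})$ periodic with $\mre\psi^\ast=1/2$ a.e.\ and choose $\psi$ so that $\Psi_{w_0}$ is the atomic singular inner function $\exp\bigl(-\frac{1+z}{1-z}\bigr)$ for some $w_0$: then (i) holds, $\Phi_{w_0}$ is inner, yet $\mathscr{M}_\varphi(w_0)=N_{\Psi_{w_0}}(0)=0$ while $\log\bigl|\frac{\overline{w_0}+\nu-1}{w_0-\nu}\bigr|=1$. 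So (i) cannot imply equality at every $w$, and the ``quasi-every'' in (ii) is not a formality accommodating a harmless exceptional set --- it is the substance of the theorem.

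What your argument is missing is precisely the Frostman-type theorem. Knowing that $\Phi_w$ is inner tells you $\int_{\mathbb{T}^\infty}\log|\Phi_w^\ast|\,dm_\infty=0$, but to conclude that $\lim_{\sigma\to0^+}\int_{\mathbb{T}^\infty}\log|(\Phi_w)_\chi(\sigma)|\,dm_\infty(\chi)=0$ you must rule out a ``singular part,'' and this can only be done off an exceptional set of logarithmic capacity zero. The paper does this via Theorem~\ref{thm:rudinfrostman} (Rudin's form of Frostman's theorem on $\mathbb{D}^\infty$, proved with an energy/continuity-principle argument from potential theory), combined with the observation that the Frostman shifts $\phi_\alpha\circ\Phi_w$ are again of the form $\lambda_\alpha\Phi_{w_\alpha}$ with $\alpha\mapsto w_\alpha$ conformal, which transfers the quasi-everywhere statement in $\alpha\in\mathbb{D}$ to quasi-everywhere in $w\in\mathbb{C}_{1/2}$ (Corollary~\ref{cor:Phiwinner}). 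Without this ingredient the implication (i) $\Rightarrow$ (ii) does not close.
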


Theorem~\ref{thm:sharpiro} should be compared with the analogous statement \cite[Sec.~4.2]{Shapiro87} for the Nevanlinna counting function \eqref{eq:nevanlinnadisc} and the Littlewood--type inequality \eqref{eq:littlewoodineq}. A key component in the proof of the latter statement is Frostman's theorem for inner functions in the unit disc, leading us to develop some of the corresponding theory for Dirichlet series.

For $f$ belonging to the Hardy space of Dirichlet series $\mathscr{H}^p$, $0 < p < \infty$, we shall therefore consider a generalization of the Jessen function,
\begin{equation} \label{eq:introconvex}
	\int_{\mathbb{T}^\infty} \log |f_\chi(\sigma)| \, dm_\infty(\chi).
\end{equation}
Here $f_\chi$ denotes the \emph{vertical limit function} of $f$ for $\chi\in\mathbb{T}^\infty$, and $f_\chi(0)$ denotes the generalized boundary value $f^\ast(\chi)$ (see Sections~\ref{sec:prelim} and \ref{sec:convexity} for these two notions). Our first result, Theorem~\ref{thm:logintdown}, is that \eqref{eq:introconvex} is a convex non-increasing function of $\sigma \geq 0$. This generalizes the convexity of the Jessen function, since we do not assume that $f$ converges uniformly in $\mathbb{C}_{\theta}$ for any $\theta \leq 1/2$. We also obtain a new proof of the fact that $\log |f^\ast| \in L^1(\mathbb{T}^\infty)$ for $f \in \mathscr{H}^p$, a result which recently appeared in \cite{AOS19}.

Based on our study of \eqref{eq:introconvex} we will then prove a Frostman theorem in Rudin's form \cite{Rudin67}. We describe only a special case of the result here, deferring the more general statement to Theorem~\ref{thm:rudinfrostman}. Recall that $f \in \mathscr{H}^2$ is \emph{inner} if $|f^\ast(\chi)| = 1$ for almost every $\chi \in \mathbb{T}^\infty$. The result is that if $f$ is inner, then for quasi-every $\alpha \in \mathbb{D}$ (in the sense of logarithmic capacity), we have that
\[\lim_{\sigma\to0^+} \lim_{T\to\infty} \frac{1}{2T}\int_{-T}^T \log|\phi_\alpha \circ f(\sigma+it)|\,dt = 0,\]
where  $\phi_\alpha$ is the automorphism of $\mathbb{D}$ given by $\phi_\alpha(z) = \frac{\alpha-z}{1-\overline{\alpha}z}$. Note here that that every inner function $f \in \mathscr{H}^2$ converges uniformly in  $\mathbb{C}_{\varepsilon}$ for every $\varepsilon > 0$, by Bohr's theorem (Lemma~\ref{lem:bohr}). 

A slightly different corollary of our general Frostman theorem will be employed to prove the implication (i) $\implies$ (ii) of Theorem~\ref{thm:sharpiro}. In Theorem~\ref{thm:innertfae}, we will also give a version of Theorems~\ref{thm:mean} and \ref{thm:sharpiro} for inner functions in $\mathscr{H}^2$, which is a direct generalization of the result in \cite[Sec.~4.2]{Shapiro87}.

Our third main result is the analogue of the Stanton formula, which arises from a non-injective change of variables in the Littlewood--Paley formula for the $\mathscr{H}^2$-norm (Lemma~\ref{lem:curlyLP}). The classical Stanton formula may be found for example in \cite[Sec.~10.3]{Shapiro93}. To prove our formula, we need a set of technical estimates collected in Lemma~\ref{lem:ThetaLsub}, in addition to the existence of the mean counting function $\mathscr{M}_\varphi$ from Theorem~\ref{thm:mean}.
\begin{theorem} \label{thm:cov}
	Let $\mathscr{M}_\varphi$ be the mean counting function \eqref{eq:meancounting} for some $\varphi\in\mathscr{G}_0$. Then 
	\begin{equation} \label{eq:stanton}
		\|\mathscr{C}_\varphi f\|_{\mathscr{H}^2}^2 = |f(\varphi(+\infty))|^2 + \frac{2}{\pi}\int_{\mathbb{C}_{1/2}} |f'(w)|^2 \mathscr{M}_\varphi(w)\,dw
	\end{equation}
	for every $f\in\mathscr{H}^2$.
\end{theorem}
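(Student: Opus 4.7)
The plan is to combine the Littlewood--Paley identity of Lemma~\ref{lem:curlyLP} with a non-injective change of variables under $\varphi$, following Shapiro's strategy for the classical Stanton formula. Applying Lemma~\ref{lem:curlyLP} to $g = \mathscr{C}_\varphi f$ and using $g(+\infty) = f(\varphi(+\infty))$ together with the chain rule $g'(s) = f'(\varphi(s))\varphi'(s)$, the quantity $\|\mathscr{C}_\varphi f\|_{\mathscr{H}^2}^2 - |f(\varphi(+\infty))|^2$ should take the shape
\[
\lim_{T\to\infty}\frac{2}{T}\int_0^\infty\!\!\int_{-T}^T \sigma|f'(\varphi(\sigma+it))|^2|\varphi'(\sigma+it)|^2\,dt\,d\sigma.
\]

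Next, for each $\sigma_0 > 0$ and $T > 0$, I apply the area formula for the holomorphic map $\varphi$ on the rectangle $\{s:\sigma_0<\mre s,\,|\mim s|<T\}$. With $|\varphi'(s)|^2$ as Jacobian and $\mre s$ as weight, this rewrites the piece of the above integral with $\sigma\in(\sigma_0,\infty)$ as
\[
\frac{2}{\pi}\int_{\mathbb{C}_{1/2}}|f'(w)|^2\cdot\frac{\pi}{T}\!\!\sum_{\substack{s\in\varphi^{-1}(\{w\})\\ \sigma_0<\mre s<\infty\\ |\mim s|<T}}\!\!\mre s\,dw.
\]
As $T\to\infty$, the inner expression converges pointwise in $w$ to the inner limit in \eqref{eq:meancounting}. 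To interchange this $T$-limit with the $w$-integration I use dominated convergence: the construction underlying Theorem~\ref{thm:mean} should supply a $T$-independent pointwise majorant of order $\log\big|\tfrac{\overline w+\varphi(+\infty)-1}{w-\varphi(+\infty)}\big|$, whose decay rates (as recorded after \eqref{eq:littlewoodineqM}) make it integrable against $|f'(w)|^2$ for $f\in\mathscr{H}^2$. Sending $\sigma_0\to 0^+$ afterwards promotes the inner limit to $\mathscr{M}_\varphi(w)$ by monotone convergence in $\sigma_0$.

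The principal obstacle is the complementary boundary-strip integral
\[
\lim_{\sigma_0\to 0^+}\lim_{T\to\infty}\frac{2}{T}\int_0^{\sigma_0}\!\!\int_{-T}^T \sigma|f'(\varphi(\sigma+it))|^2|\varphi'(\sigma+it)|^2\,dt\,d\sigma,
\]
which must vanish in order to match the two sides. The hypothesis $\varphi\in\mathscr{G}_0$ enforces $\mre\varphi(s)\geq 1/2$ on $\mathbb{C}_0$, which controls $|f'(\varphi(s))|$, but $|\varphi'(s)|^2$ may still concentrate near the boundary $\mre s = 0$, and the weight $\sigma$ alone is not enough to damp it. The sharp quantitative control needed to kill this boundary strip, both uniformly in $T$ and in $\sigma_0$, is exactly the role of the technical estimates collected in Lemma~\ref{lem:ThetaLsub}. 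Once this contribution is eliminated, \eqref{eq:stanton} follows directly from matching the two sides.
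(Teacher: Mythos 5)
Your overall architecture (Littlewood--Paley for $f\circ\varphi$, non-injective change of variables, dominated convergence in $T$, monotone convergence in $\sigma_0$) is indeed the paper's, but there is a structural error at the very first step. Lemma~\ref{lem:curlyLP} does not yield $\lim_{T\to\infty}\frac{2}{T}\int_0^\infty\int_{-T}^T(\cdots)\,dt\,\sigma d\sigma$; it yields the iterated limit $\lim_{\sigma_0\to0^+}\lim_{T\to\infty}\frac{2}{T}\int_{\sigma_0}^\infty\int_{-T}^T(\cdots)\,dt\,\sigma d\sigma$, with the $\sigma_0$-limit on the outside. Writing the formula with $\int_0^\infty$ and a single $T$-limit amounts to interchanging the two limits, which is not established anywhere and is closely related to the open interchange problem of Section~\ref{subsec:limswap}. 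Consequently the ``complementary boundary-strip integral'' over $0<\sigma<\sigma_0$ that you identify as the principal obstacle does not occur in the correct argument: one works on $\sigma_0<\mre s<\infty$ for fixed $\sigma_0$, takes $T\to\infty$ there, and only then lets $\sigma_0\to0^+$, so no such strip ever enters. The role you assign to Lemma~\ref{lem:ThetaLsub} --- killing that strip --- is therefore not its role, and if you tried to carry out your plan literally you would be stuck proving an interchange of limits that the paper deliberately avoids.

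You also gloss over the step where Lemma~\ref{lem:ThetaLsub} is actually needed. You assert that the construction behind Theorem~\ref{thm:mean} ``should supply a $T$-independent pointwise majorant'' of order $\log\big|\frac{\overline{w}+\varphi(+\infty)-1}{w-\varphi(+\infty)}\big|$ for the partial counting sums. For general (non-periodic) $\varphi$ no such majorant, uniform in $T$ and valid for all $w$, is available --- the paper says so explicitly after \eqref{eq:periodicuniform}. Estimate \eqref{eq:ThetaLsub2} provides a $T$-uniform majorant only for $|w-\varphi(+\infty)|\geq\delta$, so dominated convergence applies only on $\mathbb{C}_{1/2}\setminus\mathbb{D}(\varphi(+\infty),\delta)$; on the remaining small disc one must instead invoke \eqref{eq:ThetaLsub1}, whose bound involves $\varphi(2T)$ rather than $\varphi(+\infty)$, together with a pointwise bound on $|f'|$, to show that this contribution is $O(\delta^2\log\delta^{-1})$ uniformly in $T$. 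This $\delta$-splitting near $\varphi(+\infty)$ is the genuine technical content of the proof and is absent from your outline. Two smaller omissions: the paper first proves the identity for Dirichlet polynomials, truncating to $\sigma_0<\mre s\leq\sigma_1$ and computing the $\sigma_1\to\infty$, $\sigma_0\to0^+$ limits of the left-hand side by coefficient calculations as in Lemma~\ref{lem:curlyLP}, and only then extends to all $f\in\mathscr{H}^2$ by density using the boundedness of $\mathscr{C}_\varphi$.
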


We will apply Theorem~\ref{thm:cov} to prove our final main result.

\begin{theorem} \label{thm:compact}
	Let $\varphi \in \mathscr{G}_0$ and let $\mathscr{M}_\varphi$ denote the mean counting function \eqref{eq:meancounting}. Then $\mathscr{C}_{\varphi} \colon \mathscr{H}^2 \to \mathscr{H}^2$ is compact if and only if
	\begin{equation*} 
		\lim_{\mre w \to \frac{1}{2}^+} \frac{\mathscr{M}_{\varphi}(w)}{\mre w - 1/2} = 0.
	\end{equation*}
\end{theorem}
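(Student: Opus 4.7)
My plan is to use the Stanton formula of Theorem~\ref{thm:cov} in both directions, combined with the Littlewood-type bound $\mathscr{M}_\varphi \leq \mathscr{M}_{\varphi_\nu}$ of Theorem~\ref{thm:mean} (with $\nu = \varphi(+\infty)$) and the explicit mean counting function \eqref{eq:Mvarphinu} of the extremal symbol $\varphi_\nu$.

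For the sufficiency direction, let $\{f_n\}$ be a bounded sequence in $\mathscr{H}^2$ with $f_n \to 0$ weakly, so that $f_n$ and $f_n'$ tend to zero uniformly on compact subsets of $\mathbb{C}_{1/2}$. Theorem~\ref{thm:cov} gives
\[\|\mathscr{C}_\varphi f_n\|_{\mathscr{H}^2}^2 = |f_n(\nu)|^2 + \frac{2}{\pi}\int_{\mathbb{C}_{1/2}} |f_n'(w)|^2 \mathscr{M}_\varphi(w)\,dw,\]
and the point value vanishes. The crucial uniform bound comes from the Stanton formula applied to the bounded symbol $\varphi_\nu$, which yields the Carleson-type estimate $\int |f_n'|^2 \mathscr{M}_{\varphi_\nu}\,dw \leq C\|f_n\|^2$. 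I would combine this with the Littlewood bound $\mathscr{M}_\varphi \leq \mathscr{M}_{\varphi_\nu}$, the decay hypothesis (forcing $\mathscr{M}_\varphi(w) \leq \varepsilon (\mre w - 1/2)$ near the boundary), and the automatic decay of $\mathscr{M}_{\varphi_\nu}$ in $|\mim w|$, splitting $\mathbb{C}_{1/2}$ into a near-boundary strip, a compact middle region (where $f_n'$ tends to zero uniformly), and a tail in $|\mim w|$; this shows that $\mathscr{M}_\varphi\,dw$ behaves as a \emph{vanishing} Carleson measure for the weighted norm on the right side of the Stanton formula, and $\|\mathscr{C}_\varphi f_n\|_{\mathscr{H}^2} \to 0$.

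For the necessity direction, assume $\mathscr{C}_\varphi$ is compact and consider the normalized reproducing kernels $k_w = K_w/\|K_w\|$ with $K_w(s) = \zeta(s+\overline{w})$ and $\|K_w\|^2 = \zeta(2\mre w)$. Density of Dirichlet polynomials combined with $\|K_w\|\to\infty$ as $\mre w \to 1/2^+$ shows that $k_w \to 0$ weakly in $\mathscr{H}^2$, uniformly in $\mim w$, so compactness forces $\|\mathscr{C}_\varphi k_w\|_{\mathscr{H}^2} \to 0$. Theorem~\ref{thm:cov} then gives
\[\frac{2}{\pi}\int_{\mathbb{C}_{1/2}} \frac{|\zeta'(z+\overline{w})|^2}{\zeta(2\mre w)}\,\mathscr{M}_\varphi(z)\,dz \longrightarrow 0 \quad \text{as } \mre w \to \tfrac{1}{2}^{+},\]
the point-value term $|\zeta(\nu+\overline w)|^2/\zeta(2\mre w)$ vanishing. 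Writing $\epsilon = \mre w - 1/2$, the integrand $|k_w'(z)|^2 = |\zeta'(z+\overline{w})|^2/\zeta(2\mre w)$ is of order $\epsilon^{-3}$ on a hyperbolic ball of radius of order $\epsilon$ around $w$, so integration against $\mathscr{M}_\varphi$ produces a lower bound of order $\mathscr{M}_\varphi(w)/\epsilon$ modulo an averaging of $\mathscr{M}_\varphi$ over the ball. The vanishing of the whole quantity then yields $\mathscr{M}_\varphi(w)/(\mre w - 1/2)\to 0$.

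The principal obstacle is the pointwise extraction in the necessity direction: converting the vanishing of the Stanton integral into a pointwise decay of $\mathscr{M}_\varphi(w)/(\mre w - 1/2)$ requires some mild regularity of $\mathscr{M}_\varphi$, so that local averages on hyperbolic balls of radius $\sim \mre w - 1/2$ approximate pointwise values. This should follow from the expression \eqref{eq:countingJintro} of $\mathscr{M}_\varphi$ via the convex Jessen function. If that regularity proves insufficient, a sharper test family such as $K_w^\alpha$ for $\alpha > 1$, localizing $|k_w'|^2$ more strongly, could be used instead.
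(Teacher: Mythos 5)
Your necessity argument is essentially the paper's: normalized reproducing kernels $K_w/\sqrt{\zeta(2\mre w)}$ tend weakly to zero, $|K_w'|^2\gtrsim (\mre w-1/2)^{-3}$ on a disc of radius $\sim \mre w-1/2$ about $w$, and the passage from the average of $\mathscr{M}_\varphi$ over that disc back to the pointwise value is exactly the submean value property of Lemma~\ref{lem:submean}, which the paper derives from the representation of $\mathscr{M}_\varphi$ via the Jessen function and the subharmonicity of $w\mapsto\log|f_\chi(\sigma)-w|$. So the regularity you flag as the "principal obstacle" is available and in the right direction ($\mathscr{M}_\varphi(w)\leq$ average), and this half of your proof is sound.

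The sufficiency direction has a genuine gap: the three inputs you list (the hypothesis, the bound $\mathscr{M}_\varphi\leq\mathscr{M}_{\varphi_\nu}$, and the Stanton formula for $\varphi_\nu$) do not yield a vanishing Carleson measure. The Stanton formula for $\varphi_\nu$ gives only the \emph{global} bound $\int|f'|^2\mathscr{M}_{\varphi_\nu}\,dw\ll\|f\|_{\mathscr{H}^2}^2$, and a global Carleson bound cannot be localized to produce small constants on the regions where the mass of a weakly null sequence may escape. Concretely, in the near-boundary strip the hypothesis gives $\mathscr{M}_\varphi(w)\leq\varepsilon(\mre w-1/2)$, but $(\mre w-1/2)\,dw$ restricted to a strip is not a Carleson measure at all: for $f(s)=2^{-s}$ the quantity $|f'(w)|^2$ is independent of $\mim w$ and the $t$-integral diverges. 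One must therefore combine the $\varepsilon$-gain from the hypothesis \emph{multiplicatively} with the $|\mim w|^{-2}$ decay from the Littlewood bound (Lemma~\ref{lem:logest}), arriving at an intermediate majorant of the form $\varepsilon^{c}(\mre w-1/2)|w-\nu|^{-1-\delta}$ with $0<\delta<1$; and the Carleson property of \emph{that} measure does not follow from the $\delta=1$ case furnished by $\mathscr{M}_{\varphi_\nu}$. It requires the local embedding theorem of Hedenmalm--Lindqvist--Seip, which is precisely what the paper's Lemmas~\ref{lem:carleson} and~\ref{lem:carleson2} package. A second instance of the same problem occurs in your region $\{\mre w\geq\theta,\ |\mim w|>M\}$: weak convergence gives uniform decay of $f_n'$ only on compact sets, so the unbounded part of $\{\mre w\geq\theta\}$ is uncovered; the paper closes this with the coefficient estimate of Lemma~\ref{lem:weakuniform}, which shows $|f_n'(s)|\leq\varepsilon|(2^{-s})'|$ \emph{uniformly in} $\mim s$ on $\mre s\geq\theta$ for $n$ large. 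In short, the missing ingredient is a quantitative, localizable Carleson estimate (the local embedding), without which the "vanishing Carleson measure" step fails in the corners where $|\mim w|\to\infty$.
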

Given Theorem~\ref{thm:cov} and the submean value property of the mean counting function (Lemma~\ref{lem:submean}), it is straightforward to prove that the vanishing condition is necessary for compactness, by using the reproducing kernels of $\mathscr{H}^2$ in a standard manner. The sufficiency relies on Carleson measure techniques and the fact that the Littlewood-type inequality \eqref{eq:littlewoodineqM} actually produces more than sufficient decay as $|\mim{w}|\to\infty$. 

\subsection*{Organization} In Section~\ref{sec:prelim} we discuss the notions of almost periodicity and vertical limits of Dirichlet series, and examine the mean counting function in the simple case of a periodic symbol. Furthermore, we establish certain technical estimates needed for the proof of Theorem~\ref{thm:cov}. In Section~\ref{sec:convexity} we outline the foundations of $\mathscr{H}^p$-theory and establish the convexity of the logarithmic integrals \eqref{eq:introconvex}. In Section~\ref{sec:jessen} we discuss the Jessen function, a Nevanlinna-type class of Dirichlet series, and prove our Frostman theorem. In Section~\ref{sec:JT} we briefly outline the work of Jessen and Tornehave \cite{JT45}. In Section~\ref{sec:mean} we prove that the mean counting function exists and establish Theorems~\ref{thm:mean} and \ref{thm:sharpiro}. In Section~\ref{sec:compact} we prove Theorems~\ref{thm:cov} and \ref{thm:compact}. In Section~\ref{sec:further} we discuss some interesting open problems, as well as outline the state-of-the-art for composition operators associated with case (b) of the Gordon--Hedenmalm class.

\subsection*{Acknowledgments} The authors are grateful to Titus Hilberdink and Kristian Seip for some useful discussions. The research of K.-M. Perfekt was supported by the UK Engineering and Physical Sciences Research Council, grant EP/S029486/1.

\section{Preliminaries and applications of the classical counting function} \label{sec:prelim}
Consider a Dirichlet series $f(s)=\sum_{n\geq1} a_n n^{-s}$. The \emph{abscissa of convergence} $\sigma_{\operatorname{c}}(f)$ is defined as the infimum of all real numbers $\theta$ such that $f$ converges in the half-plane $\mathbb{C}_\theta$, and $\sigma_{\operatorname{a}}(f)$ is the similarly defined \emph{abscissa of absolute convergence}. As mentioned in the introduction, we have $\sigma_{\operatorname{a}}(f)\leq 1/2$ for every $f \in \mathscr{H}^2$. This is easily seen to be optimal, for instance by considering
\[f(s) = \sum_{n=2}^\infty \frac{1}{\sqrt{n}(\log{n})} n^{-s},\]
which evidently satisfies $\sigma_{\operatorname{c}}(f)=\sigma_{\operatorname{a}}(f)=1/2$. 

The \emph{abscissa of uniform convergence} $\sigma_{\operatorname{u}}(f)$ is the infimum of all $\theta$ such that the Dirichlet series $f$ converges uniformly in $\overline{\mathbb{C}_{\theta}}$. We recall from the introduction that if $\varphi\in\mathscr{G}_0$, then $\sigma_{\operatorname{u}}(\varphi)\leq 0$ (see also \cite[Sec.~3]{QS15}). The well-known upper bound
\begin{equation} \label{eq:au12}
	\sigma_{\operatorname{a}}(f)-\sigma_{\operatorname{u}}(f)\leq \frac{1}{2}
\end{equation}
follows at once from the Cauchy--Schwarz inequality and the Cahen--Bohr formulas for the relevant abscissas (see e.g. \cite[Thm.~4.2.1]{QQ13}). A deep result of Bohnenblust and Hille \cite{BH31} is that the upper bound in \eqref{eq:au12} is sharp. We shall also have use of the following result, often called Bohr's theorem \cite{Bohr13}. 

\begin{lemma} \label{lem:bohr}
	Suppose that $f$ is a somewhere convergent Dirichlet series. Let $\sigma_{\operatorname{b}}(f)$ denote the infimum of all $\theta$ such that $f$ may be extended (by analytic continuation if necessary) to a bounded analytic function in $\mathbb{C}_\theta$. Then $\sigma_{\operatorname{u}}(f)=\sigma_{\operatorname{b}}(f)$.
\end{lemma}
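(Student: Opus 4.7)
The inclusion $\sigma_{\operatorname{b}}(f) \leq \sigma_{\operatorname{u}}(f)$ is immediate: each Dirichlet polynomial $S_N(s) = \sum_{n \leq N} a_n n^{-s}$ is almost periodic in $\mim s$ and decays as $\mre s \to +\infty$, hence is bounded on $\overline{\mathbb{C}_\theta}$, and a uniform limit $f$ of bounded functions on $\overline{\mathbb{C}_\theta}$ satisfies $|f| \leq |S_{N_0}| + 1$ for some $N_0$ and is therefore bounded. For the converse, I would fix $\theta > \sigma_{\operatorname{b}}(f)$ and $\varepsilon > 0$; translating by $\theta$, we may assume $\theta = 0$, so $f$ extends to a bounded analytic function on $\mathbb{C}_0$ with $|f| \leq M$, and the task is to show that $\sum a_n n^{-s}$ converges uniformly on $\overline{\mathbb{C}_\varepsilon}$.

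The plan is to use a Perron-type integral representation of smoothed partial sums. For $c > \sigma_{\operatorname{a}}(f) - \mre s$, interchanging summation and integration gives
\[
\sum_{n \leq N} a_n \Bigl(1 - \tfrac{n}{N}\Bigr) n^{-s} = \frac{1}{2\pi i} \int_{c - i\infty}^{c + i\infty} f(s+w) \frac{N^{w}}{w(w+1)}\,dw.
\]
I would then deform the contour across $\mre w = 0$ to $\mre w = -\mre s + \delta$ for small $\delta > 0$, picking up the residue $f(s)$ at $w = 0$. On the shifted contour the bound $|f(s+w)| \leq M$ applies because $\mre(s+w) = \delta > 0$, and combined with the $1/|w|^2$ decay of the kernel this yields
\[
\left|\sum_{n \leq N} a_n \Bigl(1 - \tfrac{n}{N}\Bigr) n^{-s} - f(s)\right| \leq C M N^{-\mre s + \delta},
\]
uniformly for $s \in \overline{\mathbb{C}_\varepsilon}$ as soon as $\delta < \varepsilon$. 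Consequently the Cesàro-smoothed partial sums converge uniformly to $f$ on $\overline{\mathbb{C}_\varepsilon}$. To replace them by the ordinary partial sums $S_N$ on any slightly smaller half-plane $\overline{\mathbb{C}_{\varepsilon'}}$ with $\varepsilon' > \varepsilon$, I would invoke a standard Tauberian argument using the elementary coefficient bound $|a_n| = O\bigl(n^{\sigma_{\operatorname{c}}(f) + \eta}\bigr)$, valid for every $\eta > 0$ by somewhere-convergence of $f$.

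The principal obstacle is preserving uniformity in $\mim s$: the bound $M$ does not degrade as $|\mim s| \to \infty$, but the contour integral threatens to, so the smoothing kernel must have vertical decay independent of $s$. The choice $1/(w(w+1))$ is dictated precisely by this need, providing $1/|w|^2$ decay and hence a convergent integral on every vertical line with a bound independent of $\mim s$. Once the smoothed convergence is in hand, the Tauberian passage to ordinary partial sums is technical but routine, since only polynomial growth of the coefficients is needed, and this is already furnished by the hypothesis that $f$ is somewhere convergent.
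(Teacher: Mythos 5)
The paper does not actually prove this lemma; it is Bohr's theorem, quoted with a reference to \cite{Bohr13}, so your argument has to stand on its own. The easy inequality $\sigma_{\operatorname{b}}(f)\leq\sigma_{\operatorname{u}}(f)$ and the Perron representation of the Ces\`aro means are fine, up to the small point that your $s$-dependent contour $\mre w=-\mre s+\delta$ crosses the second pole at $w=-1$ whenever $\mre s>1+\delta$; shifting instead to the fixed line $\mre w=-\varepsilon+\delta$ avoids this and still gives $\bigl|\sum_{n\leq N}a_n(1-n/N)n^{-s}-f(s)\bigr|\leq C M N^{-\varepsilon+\delta}$ uniformly on $\overline{\mathbb{C}_\varepsilon}$. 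The genuine gap is the final Tauberian step, which is not routine and, as stated, fails. Passing from the first-order Riesz means of type $n$ back to the ordinary partial sums costs you a window sum: writing $NR_N(s)=\int_0^N S_t(s)\,dt$ for the means $R_N$ and partial sums $S_t$, one obtains for $N<M\leq 2N$
\[
|S_N(s)-f(s)|\leq C\,\frac{N^{1-\varepsilon+\delta}}{M-N}+\sum_{N<n\leq M}|a_n|\,n^{-\mre s},
\]
and with only $|a_n|=O(n^{\beta})$, $\beta=\sigma_{\operatorname{c}}(f)+\eta$, optimizing the window length $M-N$ yields uniform convergence only for $\mre s>(1+\beta)/2+o(1)$. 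This does not reach an arbitrary half-plane $\overline{\mathbb{C}_{\varepsilon'}}$, and bootstrapping (a better $\sigma_{\operatorname{u}}$ gives a better $\sigma_{\operatorname{a}}$ via \eqref{eq:au12}, hence a better $\beta$) stalls at a fixed point with $\sigma_{\operatorname{u}}\leq 3/2$ rather than $0$. In Hardy--Riesz terms, summability by typical means of the second kind of order one is genuinely weaker than convergence, and a polynomial coefficient bound is not a strong enough Tauberian hypothesis to remove the loss.

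The defect lies in the choice of weight, not in the Perron/contour-shift strategy: the Ces\`aro weight $1-n/N$ perturbs every coefficient with $n\leq N$, so the discrepancy $S_N-R_N=N^{-1}\sum_{n\leq N}a_n n^{1-s}$ carries mass from the entire range and cannot be controlled by $|a_n|=O(n^\beta)$ alone. The standard proofs use a kernel whose weight equals $1$ for all $n\leq N$: either the sharp cutoff $N^w/w$ with the contour truncated at height $T=N^A$ for $A$ large, combined with the classical truncated-Perron error term involving $|\log(N/n)|^{-1}$; or a one-sided smoothing such as $\frac{N^w}{w}\cdot\frac{e^{\kappa w}-1}{\kappa w}$ with $\kappa=N^{-A}$, whose extraneous terms involve only $n\in(N,Ne^{\kappa}]$ and are then harmless under the polynomial coefficient bound. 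Either modification preserves enough vertical decay for your contour shift and closes the argument in one pass, with no Tauberian step at all.
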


A function $h \colon \mathbb{R} \to \mathbb{C}$ is \emph{almost periodic} if there for every $\varepsilon > 0$ exists a relatively dense set of real numbers $\tau$ such that
\[\sup_{t \in \mathbb{R}} |h(t+\tau) - h(t)| \leq \varepsilon.\]
A well-known equivalent definition, due to Bochner, is that the set $\{h(\cdot + \tau)\}_{\tau \in \mathbb{R}}$ is relatively compact in the topology of uniform convergence. If $h$ is almost periodic, then the mean value
\[\lim_{T \to \infty} \frac{1}{2T} \int_{-T}^T h(t) \, dt\]
exists. For $\tau \in \mathbb{R}$, let $T_\tau$ denote the vertical translation $T_\tau f(s) =f(s+i\tau)$. We say that a function $f \colon \overline{\mathbb{C}_\theta} \to \mathbb{C}$ is almost periodic if there for each $\varepsilon>0$ exists a relatively dense set of real numbers $\tau$ such that
\[\sup_{\mre{s}\geq \theta} |T_\tau f(s)-f(s)|\leq\varepsilon.\]
It is well-known that if $f(s) = \sum_{n\geq1} a_n n^{-s}$ satisfies $\sigma_{\operatorname{u}}(f)\leq0$, then $f$ is almost periodic in the half-plane $\overline{\mathbb{C}_\theta}$ for every $\theta>0$ (see e.g.~\cite[Sec.~1.5]{QQ13}). In this case,
\[\lim_{T \to \infty} \frac{1}{2T} \int_{-T}^T f(\sigma + it) \, dt = a_1\]
for every $\sigma >0$.

Let us now discuss how uniformly convergent Dirichlet series have \emph{vertical limit functions}. Let $\mathbb{T}^\infty$ denote the countably infinite Cartesian product of the torus,
\[\mathbb{T}=\{z\in\mathbb{C}\,:\,|z|=1\},\]
forming a compact commutative group under coordinate-wise multiplication. The Haar measure of $\mathbb{T}^\infty$, denoted $m_\infty$, is the countably infinite product measure generated by the normalized Lebesgue arc length measure of $\mathbb{T}$, denoted $m$. For
\[\chi = (\chi_1,\chi_2,\ldots)\in\mathbb{T}^\infty,\]
define the \emph{character} $\chi \colon \mathbb{N}\to\mathbb{T}$ as the completely multiplicative function defined by $\chi(p_j)=\chi_j$, where $(p_j)_{j\geq1}$ denotes the increasing sequence of prime numbers. For a Dirichlet series $f(s)=\sum_{n\geq1} a_n n^{-s}$ and a character $\chi$, the vertical limit function $f_\chi$ is defined as 
\[f_\chi(s) = \sum_{n=1}^\infty a_n \chi(n) n^{-s}\]
Note that the vertical translation $T_\tau f$ corresponds to the vertical limit function $f_\chi$ of the character $\chi(n) = n^{-i\tau}$. 

Let $f$ be a Dirichlet series which is uniformly convergent in $\overline{\mathbb{C}_\theta}$. By e.g.~\cite[Lem.~2.4]{HLS97}, the vertical limit functions $f_\chi$ are precisely the functions obtained as uniform limits
\begin{equation} \label{eq:chitauk}
	f_\chi(s) = \lim_{k\to\infty} T_{\tau_j}f(s)
\end{equation}
in $\overline{\mathbb{C}_\theta}$, where $(\tau_j)_{j\geq1}$ is a sequence of real numbers.

By almost periodicity and Lemma~\ref{lem:bohr}, we see from \eqref{eq:chitauk} that $\sigma_{\operatorname{u}}(f)=\sigma_{\operatorname{u}}(f_\chi)$ for every Dirichlet series $f$ and $\chi \in \mathbb{T}^\infty$. However, the abscissa of convergence is not invariant under vertical limits (see \cite[Sec.~2.2]{Bayart02} or \cite[Sec.~4.2]{HLS97}). Specifically, if $f\in\mathscr{H}^2$, then $\sigma_{\operatorname{c}}(f_\chi)\leq0$ for almost every $\chi \in \mathbb{T}^\infty$. Moreover, the \emph{generalized boundary value}
\begin{equation} \label{eq:genboundary}
	f^\ast(\chi) = \lim_{\sigma\to0^+} f_\chi(\sigma)
\end{equation}
exists for almost every $\chi \in \mathbb{T}^\infty$ and $\|f^\ast\|_{L^2(\mathbb{T}^\infty)}=\|f\|_{\mathscr{H}^2}$.

Under stronger assumptions, which hold in the context of Theorem~\ref{thm:cov}, it is possible to compute the $\mathscr{H}^2$-norm as an $L^2$-average near the imaginary axis. Namely, we have Carlson's formula \cite{Carlson52}, which states that if $f\in\mathscr{H}^2$ and $\sigma_{\operatorname{u}}(f)\leq0$, then 
\begin{equation} \label{eq:carlson2}
	\|f\|_{\mathscr{H}^2}^2 = \lim_{\sigma\to0^+} \lim_{T\to\infty} \frac{1}{2T} \int_{-T}^T |f(\sigma+it)|^2 \,dt.
\end{equation}

The starting point of this paper is to make note of the Littlewood--Paley formula that follows from \eqref{eq:carlson2}. We include a short proof for the benefit of the reader, although this formula is certainly known to experts. See for example \cite[Prop.~2]{Bayart02} for a similar formula which has relevance to the compactness question for symbols $\varphi$ associated with case (b) of the Gordon--Hedenmalm class.

\begin{lemma} \label{lem:curlyLP}
	Suppose that $f\in\mathscr{H}^2$ and that $\sigma_{\operatorname{u}}(f)\leq 0$. Then
	\begin{equation} \label{eq:curlyLP}
		\|f\|_{\mathscr{H}^2}^2 = |f(+\infty)|^2 + \lim_{\sigma_0 \to 0^+} \lim_{T\to\infty} \frac{2}{T} \int_{\sigma_0}^\infty \int_{-T}^T |f'(\sigma+it)|^2 \,dt \, \sigma d\sigma.
	\end{equation}
\end{lemma}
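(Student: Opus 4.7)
The plan is to reduce both sides of \eqref{eq:curlyLP} to the squared coefficients of $f(s) = \sum_{n \geq 1} a_n n^{-s}$ by combining almost-periodic mean values on vertical lines with the substitution $u = 2\sigma \log n$ in the outer integral. As a preliminary step, I would promote the hypothesis to $\sigma_{\operatorname{u}}(f') \leq 0$: Bohr's theorem (Lemma~\ref{lem:bohr}) bounds $f$ analytically on $\mathbb{C}_\varepsilon$ for every $\varepsilon > 0$, a Cauchy estimate transfers that boundedness to $f'$ on $\mathbb{C}_{2\varepsilon}$, and the converse direction of Bohr's theorem returns the assertion. In particular, $f'$ is uniformly almost periodic on every $\overline{\mathbb{C}_\theta}$ with $\theta > 0$, and its Dirichlet series converges uniformly there.

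The first main step is the line-wise Parseval identity
\[
    \lim_{T \to \infty} \frac{1}{2T} \int_{-T}^{T} |f'(\sigma+it)|^2 \, dt = \sum_{n=1}^\infty |a_n|^2 (\log n)^2 n^{-2\sigma}, \quad \sigma > 0,
\]
which I would establish by approximating $f$ with its Dirichlet polynomials $f_N$. For $f_N'$ the mean value is computed directly, the off-diagonal cross terms contributing an $O_N(1/T)$ which vanishes after dividing by $T$. The uniform convergence $f_N' \to f'$ on the line $\mre s = \sigma$ (from yet another Cauchy estimate applied to $f_N - f$) then legitimizes exchanging $\lim_N$ and $\lim_T$.

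The second main step is passing the $T$-limit through the outer $\sigma$-integral by dominated convergence, followed by a Fubini--Tonelli swap and the substitution $u = 2\sigma\log n$. A Cauchy estimate on disks of radius $\sigma_0/2$ bounds $|f'(\sigma+it)|^2$ uniformly in $t$ on $[\sigma_0,1]$ by a constant depending only on $\sigma_0$, while Cauchy--Schwarz applied to the Dirichlet series of $f'$ gives $|f'(\sigma+it)|^2 \leq C \cdot 4^{-\sigma}$ on $[1,\infty)$; both bounds are $T$-independent and integrable against $\sigma\,d\sigma$ on $[\sigma_0,\infty)$. After the swap, the $n$-th inner integral evaluates to $\tfrac{1}{4} \int_{2\sigma_0 \log n}^\infty u e^{-u}\,du$, which is zero for $n=1$ and tends to $1/4$ for each $n \geq 2$ as $\sigma_0 \to 0^+$ by monotone convergence. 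Accounting for the prefactor $\tfrac{2}{T} = 4 \cdot \tfrac{1}{2T}$, the right-hand side of \eqref{eq:curlyLP} becomes $|f(+\infty)|^2 + \sum_{n \geq 2} |a_n|^2 = \|f\|_{\mathscr{H}^2}^2$.

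The principal technical obstacle is the dovetailing of the two majorants for $|f'|^2$ needed for dominated convergence: the Cauchy estimate alone decays only like $1/\sigma^2$, so $\sigma \cdot 1/\sigma^2 = 1/\sigma$ fails to be integrable at infinity, and one must harvest the exponential decay built into the Dirichlet series itself to close off the tail.
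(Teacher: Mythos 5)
Your proposal is correct and follows essentially the same route as the paper: compute the vertical mean of $|f'(\sigma+it)|^2$ coefficient-wise (Carlson's formula applied to $f'(\cdot+\sigma)$), integrate against $\sigma\,d\sigma$, use $\int_0^\infty (\log n)^2 n^{-2\sigma}\,\sigma\,d\sigma = \tfrac14$, and let $\sigma_0\to 0^+$ by monotone convergence. Your explicit two-piece majorant (Cauchy estimate near $\sigma_0$, Dirichlet-series tail decay for large $\sigma$) just fills in the limit interchanges that the paper dispatches with ``by approximating with partial sums.''
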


\begin{proof}
	Suppose first that $f(s) = \sum_{n\geq1} a_n n^{-s}$  is a Dirichlet polynomial and let $\sigma_0 < \sigma < \infty$. Exchanging the order of summation and integration, we see that
	\[\lim_{T\to\infty} \frac{2}{T} \int_{-T}^T |f'(\sigma+it)|^2 \,dt = 4\sum_{n=2}^\infty \frac{|a_n|^2 (\log{n})^2}{n^{2\sigma}}.\]
	In fact, this is \eqref{eq:carlson2} applied to $g(s)=f'(s+\sigma)$. Furthermore, 
	\[\lim_{T\to\infty} \frac{2}{T} \int_{\sigma_0}^\infty \int_{-T}^T |f'(\sigma+it)|^2 \,dt \, \sigma d\sigma = 4 \sum_{n=2}^\infty |a_n|^2 \int_{\sigma_0}^\infty \frac{(\log{n})^2}{n^{2\sigma}} \, \sigma\,d\sigma. \]
	By approximating with partial sums, this last formula remains valid for $f \in \mathscr{H}^2$ such that $\sigma_{\operatorname{u}}(f)\leq 0$. The identity \eqref{eq:curlyLP} now follows upon using the monotone convergence theorem to let $\sigma_0 \to 0^+$,  computing that
	\[\int_{0}^\infty \frac{(\log{n})^2}{n^{2\sigma}}\, \sigma\,d\sigma = \frac{1}{4},\]
	and recalling that $a_1=f(+\infty)$. 
\end{proof}

To obtain the change of variable formula of Theorem~\ref{thm:cov}, we are going to apply the Littlewood--Paley formula \eqref{eq:curlyLP} to the function $f \circ \varphi$. Note that $\sigma_{\operatorname{u}}(f \circ \varphi) \leq 0$ whenever $f \in \mathscr{H}^2$ and $\varphi \in \mathscr{G}_0$, by the maximum principle and Lemma~\ref{lem:bohr}. By a non-injective change of variables, we find, for $\sigma_0>0$ and $T>0$, that
\begin{equation} \label{eq:covT}
	\frac{2}{T} \int_{\sigma_0}^\infty \int_{-T}^T |(f\circ\varphi)'(\sigma+it)|^2 \,dt \, \sigma d\sigma = \frac{2}{\pi}\int_{\mathbb{C}_{1/2}} |f'(w)|^2 \mathscr{M}_\varphi(w,\sigma_0,T) \,dw,
\end{equation}
where
\begin{equation} \label{eq:meansigma0T}
	\mathscr{M}_\varphi(w,\sigma_0,T)=\frac{\pi}{T}\sum_{\substack{s \in \varphi^{-1}(\{w\}) \\ \,\,\,\,\,|\mim{s}|<T \\ \sigma_0<\mre{s}<\infty}}\mre s.
\end{equation}
If $w\neq\varphi(+\infty)$, it is clear that the equation $\varphi(s)=w$ has no solutions for sufficiently large $\mre{s}$. Consequently, \eqref{eq:meansigma0T} is a finite sum for any fixed $\sigma_0>0$ and $T>0$.

For $w \in \mathbb{C}_{1/2}$, set
\begin{equation} \label{eq:Phiw}
	\Phi_w(s) = \frac{w-\varphi(s)}{\overline{w}+\varphi(s)-1}.
\end{equation}
Clearly, $\Phi_w$ is an analytic function which maps $\mathbb{C}_0$ to $\mathbb{D}$. Moreover, since $\Phi_w$ can be expanded as a convergent Dirichlet series in some half-plane $\mathbb{C}_\theta$ it follows from Lemma~\ref{lem:bohr} that $\sigma_{\operatorname{u}}(\Phi_w)\leq 0$. This fact will not be used in the present section, but it will be important later. Noting that $\varphi(s)=w$ if and only if $\Phi_w(s)=0$, we reformulate \eqref{eq:meansigma0T} as
\begin{equation} \label{eq:meansigma0T0}
	\mathscr{M}_\varphi(w,\sigma_0,T)=\frac{\pi}{T}\sum_{\substack{s \in \Phi_w^{-1}(\{0\}) \\ \,\,\,\,\,|\mim{s}|<T \\ \sigma_0<\mre{s}<\infty}}\mre s.
\end{equation}

Let us now consider the counting function \eqref{eq:meansigma0T0} in a simple case which may be easily understood. Our analysis is inspired by the transference principle of \cite[Sec.~9]{QS15}. Suppose that $\varphi(s)=\psi(2^{-s})$ for some analytic function $\psi \colon \mathbb{D}\to \mathbb{C}_{1/2}$. In this case $\varphi$ is \emph{periodic} (with period $i 2\pi/\log{2}$). The periodicity implies that
\[\mathscr{M}_\varphi(w,\sigma_0,T) = \left(1 + O(T^{-1})\right)\,\log{2} \sum_{\substack{s \in \Phi_w^{-1}(\{0\}) \\ 0 \leq \mim{s} < 2\pi/\log{2} \\ \sigma_0<\mre{s}<\infty}}\mre s \]
Note that
\begin{equation} \label{eq:periodic}
	\log{2} \sum_{\substack{s \in \Phi_w^{-1}(\{0\}) \\ 0 \leq \mim{s} < 2\pi/\log{2} \\ \sigma_0<\mre{s}<\infty}}\mre s = \sum_{\substack{z \in \Psi_w^{-1}(\{0\}) \\ |z|<2^{-\sigma_0}}} \log{\frac{1}{|z|}},
\end{equation}
where $\Psi_w$ is the analytic self-map of $\mathbb{D}$ defined in analogy with \eqref{eq:Phiw}, that is,
\[\Psi_w(z) = \frac{w-\psi(z)}{\overline{w}+\psi(z)-1}.\]
We recognize the right hand side of \eqref{eq:periodic} as the Nevanlinna counting function \eqref{eq:nevanlinnadisc} for $\Psi_w$, with summation restricted to $z\in\mathbb{D}(0,2^{-\sigma_0})$. Therefore the mean counting function $\mathscr{M}_\varphi(w)$ exists in this case,
\[\mathscr{M}_\varphi(w) = \lim_{\sigma_0 \to 0^+} \lim_{T \to \infty} \mathscr{M}_\varphi(w,\sigma_0,T) = N_{\Psi_w}(0).\]
Furthermore, \eqref{eq:littlewoodineq} yields that 
\[\mathscr{M}_\varphi(w) \leq \log \left| \frac{1}{\Psi_w(0)}\right| = \log\left|\frac{\overline{w}+\varphi(+\infty)-1}{w-\varphi(+\infty)}\right|.\]
Hence we have proven Theorem~\ref{thm:mean} in the case of a periodic symbol $\varphi$.

\begin{remark}
	If $\psi$ is univalent and $\varphi(s)=\psi(2^{-s})$, then the same argument gives that
	\[\mathscr{M}_\varphi(w) = \log\left|\frac{1}{\psi^{-1}(w)}\right|\]
	for every $w \in \mathbb{C}_{1/2}\setminus \{\varphi(+\infty)\}$. This observation applies to the symbols $\varphi_\nu$ which we discussed in the introduction, and yields a proof of the claim \eqref{eq:Mvarphinu}.
\end{remark}

Inspecting the argument above closely, we can extract two further pieces of information. Firstly, we may actually interchange the limits
\begin{equation} \label{eq:limswap}
	\lim_{\sigma_0\to0^+} \lim_{T\to \infty} \mathscr{M}_\varphi(w,\sigma_0,T) = \lim_{T\to \infty}  \lim_{\sigma_0\to0^+} \mathscr{M}_\varphi(w,\sigma_0,T) =  \mathscr{M}_\varphi(w).
\end{equation}
In the general case, whether \eqref{eq:limswap} holds is an interesting open problem which we feel deserves further work; see Section~\ref{subsec:limswap}. Secondly, we actually have the uniform estimate
\begin{equation} \label{eq:periodicuniform}
	\mathscr{M}_\varphi(w,\sigma_0,T) \leq \mathscr{M}_\varphi(w)\left(1+O(T^{-1})\right) \leq C\log\left|\frac{\overline{w}+\varphi(+\infty)-1}{w-\varphi(+\infty)}\right|.
\end{equation}
for $T\geq1$. This allows us to easily prove Theorem~\ref{thm:cov} in the periodic case, as it can be used to justify taking the limit $T\to\infty$ in the change of variable formula \eqref{eq:covT}, see Section~\ref{sec:compact} below.

For general symbols $\varphi$, we are not able to establish an estimate quite as good as \eqref{eq:periodicuniform}. The rest of this section is devoted to proving some different estimates, uniform in $T$, that are sufficient to prove Theorem~\ref{thm:cov} once the existence of $\mathscr{M}_\varphi$ has been established in Section~\ref{sec:mean}. We begin by clarifying the behavior of the bound in Theorem~\ref{thm:mean} for points $w$ which are far from $\varphi(+\infty)$.

\begin{lemma} \label{lem:logest}
	Let $w$ and $\nu$ be distinct points in $\mathbb{C}_{1/2}$. Then
	\[2\frac{(\mre{w}-1/2)(\mre{\nu}-1/2)}{|\overline{w}+\nu-1|^2} \leq \log\left|\frac{\overline{w}+\nu-1}{w-\nu}\right| \leq 2\frac{(\mre{w}-1/2)(\mre{\nu}-1/2)}{|w-\nu|^2}.\]
\end{lemma}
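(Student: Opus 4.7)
The plan is to reduce both inequalities to the elementary estimates $1 - 1/x \leq \log x \leq x - 1$, valid for $x > 0$, applied to the modulus squared.

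First I would compute the crucial identity
\[
|\overline{w}+\nu-1|^2 - |w-\nu|^2 = 4\bigl(\mre w - 1/2\bigr)\bigl(\mre \nu - 1/2\bigr).
\]
Writing $w = \alpha + i\beta$ and $\nu = \gamma + i\delta$, both sides factor through $(\alpha+\gamma-1)^2 - (\alpha-\gamma)^2 = (2\alpha-1)(2\gamma-1)$, while the imaginary parts $(\delta-\beta)^2$ cancel. In particular, since $w,\nu\in\mathbb{C}_{1/2}$, this quantity is strictly positive, which confirms that the ratio inside the logarithm is larger than $1$.

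Next, set $x = \bigl|\tfrac{\overline{w}+\nu-1}{w-\nu}\bigr|^2 > 1$, so that
\[
\log\left|\frac{\overline{w}+\nu-1}{w-\nu}\right| = \frac{1}{2}\log x.
\]
For the upper bound, I would apply $\log x \leq x - 1$ to obtain
\[
\frac{1}{2}\log x \leq \frac{1}{2}\,\frac{|\overline{w}+\nu-1|^2 - |w-\nu|^2}{|w-\nu|^2} = \frac{2(\mre w - 1/2)(\mre \nu - 1/2)}{|w-\nu|^2},
\]
using the identity above. For the lower bound, the analogous estimate $\log x \geq 1 - 1/x = (x-1)/x$ yields
\[
\frac{1}{2}\log x \geq \frac{1}{2}\,\frac{|\overline{w}+\nu-1|^2 - |w-\nu|^2}{|\overline{w}+\nu-1|^2} = \frac{2(\mre w - 1/2)(\mre \nu - 1/2)}{|\overline{w}+\nu-1|^2}.
\]
There is no serious obstacle here; the only content is the algebraic identity for the difference of the squared moduli, and everything else is a one-line application of the standard logarithmic inequalities.
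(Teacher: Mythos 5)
Your proposal is correct and is essentially the paper's argument: the paper applies the equivalent inequalities $\tfrac{1}{2}(1-x^2)\leq -\log x\leq \tfrac{1}{2}(x^{-2}-1)$ to $x=\bigl|\tfrac{w-\nu}{\overline{w}+\nu-1}\bigr|$, which under the substitution $x^{-2}\mapsto x$ is exactly your use of $1-1/x\leq\log x\leq x-1$, combined with the same algebraic identity $|\overline{w}+\nu-1|^2-|w-\nu|^2=4(\mre w-1/2)(\mre\nu-1/2)$ that the paper leaves implicit. No gaps.
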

\begin{proof}
	Apply the elementary inequalities
	\[\frac{1}{2}(1-x^2)\leq -\log{x} \leq \frac{1}{2}(x^{-2}-1), \qquad 0 <x<\infty,\]
	to $x = \big|\frac{w-\nu}{\overline{w}+\nu-1}\big|$.
\end{proof}

We shall now study the mean counting function in the general case via the ordinary Nevanlinna counting function \eqref{eq:nevanlinnadisc}. The approach is insufficient to prove that $\mathscr{M}_\varphi$ exists, but we are able to extract the desired technical estimates. Let $\Theta$ denote the unique conformal map from $\mathbb{D}$ onto the half-strip
\[\sqsubset = \{s = \sigma+it \,:\, \sigma>0, -1<t<1\}\]
with $\Theta(0)=1$ and $\Theta'(0)>0$. A computation reveals that
\[\Theta^{-1}(s) = \frac{\sinh(s \pi/2)-\sinh(\pi/2)}{\sinh(s\pi/2)+\sinh(\pi/2)}.\]
By standard regularity results for conformal maps, there is an absolute constant $C>0$ such that
\begin{equation} \label{eq:Reslog}
	\pi \mre{s} \leq C \log{\frac{1}{|\Theta^{-1}(s)|}}
\end{equation}
whenever $|\mim{s}| \leq 1/2$ and $0\leq \mre{s} \leq 1/2$. For $T > 0$, set
\[\sqsubset_T = \{s=\sigma+it \,:\, \sigma>0, -T<t<T\},\]
and define $\Theta_T \colon \mathbb{D}\to \sqsubset_T$ by $\Theta_T(z) = T \Theta(z)$. Note that $\Theta_T(0)=T$.

Let $\varphi\in \mathscr{G}_0$ and fix some $w \in \mathbb{C}_{1/2}\setminus\{\varphi(+\infty)\}$. Consider the function $\psi_T \colon \mathbb{D}\to \mathbb{D}$ defined by 
\[\psi_T = \Phi_w \circ \Theta_T\]
where $\Phi_w$ is as in \eqref{eq:Phiw}. Note that $\psi_T(0)=\Phi_w(T)$ and that the Nevanlinna counting function of $\psi_T$ at $\xi=0$ satisfies
\begin{equation} \label{eq:psiT}
	N_{\psi_T}(0) = \sum_{z \in \psi_T^{-1}(\{0\})} \log{\frac{1}{|z|}} = \sum_{\substack{s \in \varphi^{-1}(\{w\}) \\ |\mim{s}| < T}} \log{\frac{1}{|\Theta^{-1}_T(s)|}}.
\end{equation}
Based on this we have the following result.

\begin{lemma} \label{lem:ThetaLsub}
	Let $\varphi\in\mathscr{G}_0$ and let $\Phi_w$ be defined as in \eqref{eq:Phiw} for $w \in \mathbb{C}_{1/2}\setminus\{\varphi(+\infty)\}$. There is an absolute constant $C > 0$ such that
	\begin{align}
		\frac{\pi}{T} \sum_{\substack{s \in \Phi_w^{-1}(\{0\}) \\ \,|\mim{s}|< T \\ 0<\mre s < \sigma_1}} \mre{s} &\leq C \log \left|\frac{\overline{w}+ \varphi(2T)-1}{w - \varphi(2T)} \right|, \qquad T \geq \sigma_1 > 0. \label{eq:ThetaLsub1} 
		\intertext{For every $\delta>0$ there are constants $\sigma_2 = \sigma_2(\varphi,\delta)$ and $D = D(\varphi,\delta)$ such that}
		\frac{\pi}{T} \sum_{\substack{s \in \Phi_w^{-1}(\{0\}) \\ |\mim{s}|< T}} \mre{s} &\leq D \log \left| \frac{\overline{w}+\varphi(+\infty)-1}{w - \varphi(+\infty)} \right| \label{eq:ThetaLsub2}	
	\end{align}
	uniformly for $T\geq \sigma_2$ and $|w-\varphi(+\infty)|\geq \delta$.
\end{lemma}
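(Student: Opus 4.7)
The plan is to follow the template set up immediately before the lemma, but using the conformal map $\Theta_{2T}$ in place of $\Theta_T$, since the scaled version of the regularity estimate \eqref{eq:Reslog} — valid for $|\mim s| \leq T$ and $0 \leq \mre s \leq T$ under $\Theta_{2T}$ — will then cover the summation ranges of both \eqref{eq:ThetaLsub1} and \eqref{eq:ThetaLsub2}. I would set $\psi_{2T} = \Phi_w \circ \Theta_{2T}$, so that $\psi_{2T}(0) = \Phi_w(2T)$. For \eqref{eq:ThetaLsub1}, three ingredients combine: Littlewood's inequality \eqref{eq:littlewoodineq} applied to $\psi_{2T}$ at $\xi = 0$ yields $N_{\psi_{2T}}(0) \leq \log |\Phi_w(2T)|^{-1} = \log\big|(\overline{w} + \varphi(2T) - 1)/(w - \varphi(2T))\big|$; the analogue of \eqref{eq:psiT} for $\Theta_{2T}$ expresses $N_{\psi_{2T}}(0)$ as a sum of $\log|\Theta_{2T}^{-1}(s)|^{-1}$ over $s \in \varphi^{-1}(\{w\})$ with $|\mim s| < 2T$; and rescaling \eqref{eq:Reslog} by $2T$ yields $\pi \mre s/T \leq 2C \log|\Theta_{2T}^{-1}(s)|^{-1}$ whenever $|\mim s| \leq T$ and $0 \leq \mre s \leq T$. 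Since $\sigma_1 \leq T$, the summation range on the LHS of \eqref{eq:ThetaLsub1} lies in this intersection of validity regions, and restricting the sum then chaining the three inequalities produces \eqref{eq:ThetaLsub1}.

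For \eqref{eq:ThetaLsub2}, the plan is to first show that the summation is automatically restricted to $\mre s < \sigma_3$, then apply \eqref{eq:ThetaLsub1} with $\sigma_1 = \sigma_3$, and finally compare $\log|\Phi_w(2T)|^{-1}$ with $\log|\Phi_w(+\infty)|^{-1}$. Since $\varphi_0$ converges uniformly in $\mathbb{C}_\varepsilon$ for every $\varepsilon > 0$, one has $\varphi(s) \to \varphi(+\infty)$ uniformly as $\mre s \to \infty$; I would pick $\sigma_3 = \sigma_3(\varphi, \delta)$ so that $|\varphi(s) - \varphi(+\infty)| \leq \min\big(\delta/2,\,(\mre\varphi(+\infty) - 1/2)/2\big)$ for $\mre s \geq \sigma_3$. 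Then any $s \in \varphi^{-1}(\{w\})$ with $|w - \varphi(+\infty)| \geq \delta$ satisfies $\mre s < \sigma_3$, and setting $\sigma_2 = \sigma_3$ makes \eqref{eq:ThetaLsub1} directly applicable.

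The remaining task is to show $\log|\Phi_w(2T)|^{-1} \leq D'(\varphi, \delta) \log|\Phi_w(+\infty)|^{-1}$ uniformly for $T \geq \sigma_2$ and $|w - \varphi(+\infty)| \geq \delta$. Here I would appeal to Lemma~\ref{lem:logest}: the upper bound there applied at $\nu = \varphi(2T)$, together with $\mre\varphi(2T) - 1/2 \leq 2(\mre\varphi(+\infty) - 1/2)$ and $|w - \varphi(2T)| \geq |w - \varphi(+\infty)|/2$ (both ensured by the choice of $\sigma_3$), gives
\[\log|\Phi_w(2T)|^{-1} \leq C_1 \frac{(\mre w - 1/2)(\mre\varphi(+\infty) - 1/2)}{|w - \varphi(+\infty)|^2},\]
while the lower bound of Lemma~\ref{lem:logest} at $\nu = \varphi(+\infty)$ gives
\[\log|\Phi_w(+\infty)|^{-1} \geq C_2 \frac{(\mre w - 1/2)(\mre\varphi(+\infty) - 1/2)}{|\overline{w} + \varphi(+\infty) - 1|^2}.\]
Dividing, the ratio is dominated by a constant multiple of $|\Phi_w(+\infty)|^{-2} = |\overline{w} + \varphi(+\infty) - 1|^2/|w - \varphi(+\infty)|^2$. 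To close the argument, I would invoke the algebraic identity $|\Phi_w(+\infty)|^{-2} = 1 + 4(\mre w - 1/2)(\mre\varphi(+\infty) - 1/2)/|w - \varphi(+\infty)|^2$, bounding the second term uniformly for $|w - \varphi(+\infty)| \geq \delta$ via a simple case split on whether $\mre w$ is large or bounded relative to $\mre\varphi(+\infty)$.

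The hard part will be this final uniform comparison of the two logarithms over the unbounded region $\{|w - \varphi(+\infty)| \geq \delta\} \cap \mathbb{C}_{1/2}$. A naive uniform-continuity argument fails because $|\Phi_w(\pm\infty)|$ can approach $1$ as $w$ moves toward the boundary $\mre w = 1/2$ or to infinity, making both logarithms vanish while their ratio must still be controlled. The two-sided asymptotic content of Lemma~\ref{lem:logest} is exactly what is needed to reduce the comparison to the purely algebraic bound on $|\Phi_w(+\infty)|^{-2}$ indicated above.
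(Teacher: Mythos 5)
Your argument is correct and follows essentially the same route as the paper's proof: rescaling \eqref{eq:Reslog} to $\Theta_{2T}$, applying Littlewood's inequality to $\psi_{2T}$ at the origin, restricting the zeros to a bounded strip via uniform convergence of $\varphi$, and comparing the two logarithms through the two-sided bounds of Lemma~\ref{lem:logest}. The only difference is cosmetic: where the paper simply defines $D(\varphi,\delta)$ as a supremum and asserts its finiteness, you justify that finiteness explicitly via the identity $|\overline{w}+\nu-1|^2 = |w-\nu|^2 + 4(\mre w - 1/2)(\mre\nu-1/2)$, which is a valid (and slightly more transparent) way to close the same argument.
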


\begin{proof}
	We begin by using that $T\geq \sigma_1$ and \eqref{eq:Reslog} to find that
\begin{align*}	
\frac{\pi}{T} \sum_{\substack{s \in \Phi_w^{-1}(\{0\}) \\ \,|\mim{s}|< T \\ 0<\mre s < \sigma_1}} \mre{s} &\leq C\sum_{\substack{s \in \Phi_w^{-1}(\{0\}) \\ \,|\mim{s}|< T \\ 0<\mre s < \sigma_1}} \log{\frac{1}{|\Theta_{2T}^{-1}(s)|}} \\
&\leq C\sum_{\substack{s \in \Phi_w^{-1}(\{0\}) \\ |\mim{s}|< 2T}} \log{\frac{1}{|\Theta_{2T}^{-1}(s)|}} = CN_{\psi_{2T}}(0),
\end{align*}
	where $N_{\psi_{2T}}$ is from \eqref{eq:psiT}. By the Littlewood inequality \eqref{eq:littlewoodineq} we obtain \eqref{eq:ThetaLsub1}. 

	To establish \eqref{eq:ThetaLsub2}, we choose $\sigma_2\geq1$ so large that $|\varphi(+\infty)-\varphi(s)|\leq \delta/2$ for $s$ in the half-plane $\mathbb{C}_{\sigma_2}$. In particular, for $s \in \mathbb{C}_{\sigma_2}$
	\[|w-\varphi(s)| \geq |w-\varphi(+\infty)|-|\varphi(+\infty)-\varphi(s)|\geq \delta/2.\]
	Therefore, the equation $\Phi_w(s)=0$ has no solutions in $\overline{\mathbb{C}_{\sigma_2}}$. By \eqref{eq:ThetaLsub1}, with $\sigma_1=\sigma_2$, we have that
	\[\frac{\pi}{T} \sum_{\substack{s \in \Phi_w^{-1}(\{0\}) \\ |\mim{s}|< T}} \mre{s} = \frac{\pi}{T} \sum_{\substack{s \in \Phi_w^{-1}(\{0\}) \\ \,|\mim{s}|< T \\ 0<\mre s < \sigma_2}} \mre{s} \leq C \log \left|\frac{\overline{w}+ \varphi(2T)-1}{w - \varphi(2T)} \right|\]
	for $T\geq \sigma_2$. To estimate the right hand side, we want to use the upper bound in Lemma~\ref{lem:logest} with $\nu=\varphi(2T)$ and the lower bound in Lemma~\ref{lem:logest} with $\nu=\varphi(+\infty)$. We then obtain \eqref{eq:ThetaLsub2} with
	\[D(\varphi,\delta) = \sup{\frac{\mre{\varphi(2T)}-1/2}{\mre{\varphi(+\infty)}-1/2} \frac{|w+\varphi(+\infty)-1|^2}{|w-\varphi(2T)|^2}} < \infty,\]
where the supremum is taken over all $T \geq \sigma_2$ and $|w-\varphi(+\infty)| \geq \delta$.
\end{proof}

\section{$\mathscr{H}^p$-theory I: Convexity} \label{sec:convexity}
The purpose of the present section is to collect some results regarding Hardy spaces of Dirichlet series from \cite{Bayart02,BBSS19,CG86,HLS97} and to obtain some new results on the convexity of certain integrals.

Let $\mathscr{P}$ denote the set of all Dirichlet polynomials $f(s)=\sum_{n\leq N} a_n n^{-s}$. For $0<p<\infty$, we define the Hardy space $\mathscr{H}^p$ as the closure of $\mathscr{P}$ with respect to
\begin{equation} \label{eq:curlyHp}
	\|f\|_{\mathscr{H}^p}^p = \lim_{T\to\infty} \frac{1}{2T}\int_{-T}^T |f(it)|^p\,dt.
\end{equation}
Since Dirichlet polynomials are almost periodic, it is clear that the limit \eqref{eq:curlyHp} exists. Moreover, $\|f\|_{\mathscr{H}^p}$ is a norm when $1\leq p <\infty$ and a quasi-norm when $0<p<1$. In the latter case, note that $\|f-g\|_{\mathscr{H}^p}^p$ defines a metric on $\mathscr{P}$.

To make sense of this definition, we recall that the \emph{Bohr correspondence} allows us to identify $\mathscr{H}^p$ with certain Hardy spaces of the (countably) infinite polydisc 
\[\mathbb{D}^\infty=\mathbb{D}\times\mathbb{D}\times\cdots.\]
Let $\operatorname{Poly}_\infty$ denote the set of all analytic polynomials in an arbitrary number of variables and let $\mathscr{B}\colon \mathscr{P}\to\operatorname{Poly}_\infty$ denote the multiplicative bijection defined by $F = \mathscr{B}f$ where
\[f(s) = \sum_{n=1}^N a_n n^{-s} \qquad \text{and} \qquad F(\chi) = \sum_{n=1}^N a_n \chi(n).\]
For $0<p<\infty$, define $H^p(\mathbb{D}^\infty)$ as the closure of $\operatorname{Poly}_\infty$ in the norm of $L^p(\mathbb{T}^\infty)$. By results of \cite{Bayart02,BBSS19,HLS97}, we know that $\mathscr{B}$ extends to a multiplicative isometric isomorphism from $\mathscr{H}^p$ to $H^p(\mathbb{D}^\infty)$.

It is demonstrated in \cite{CG86} that via Poisson extension, $H^p(\mathbb{D}^\infty)$ can be seen as a space of convergent power series in $\mathbb{D}^\infty \cap \ell^2$. In particular, this implies that $\mathscr{H}^p$ is a space of absolutely convergent Dirichlet series in the half-plane $\mathbb{C}_{1/2}$. As in the case $p=2$, if $f\in\mathscr{H}^p$, then $\sigma_{\operatorname{c}}(f_\chi)\leq0$ for almost every $\chi \in \mathbb{T}^\infty$. Moreover, the generalized boundary value \eqref{eq:genboundary} exists for almost every $\chi\in\mathbb{T}^\infty$. For these claims we refer to \cite[Thm.~5]{Bayart02}, and note that the proofs given there carry over to the range $0<p<1$ once the results of \cite[Sec.~2]{BBSS19} are known. 

Since we will swap between the polydisc and half-plane points of view, it will be convenient to consistently use the notation
\begin{equation} \label{eq:Ffsigma}
	F(\chi) = f^\ast(\chi) \qquad \text{and} \qquad F_\sigma(\chi) = f_\chi(\sigma),
\end{equation}
where $\sigma \geq 0$. 
 Note that the identifications \eqref{eq:Ffsigma} makes sense for almost every $\chi \in \mathbb{T}^\infty$, whenever $f \in \mathscr{H}^p$ for some $0<p<\infty$.

For a positive integer $m$, let $A_m$ denote what Bohr called \emph{der} $m$\emph{te Abschnitt}, defined formally by
\[A_mF(\chi)=F(\chi_1,\chi_2,\ldots,\chi_m,0,0,\ldots).\]
Note that if $F(\chi)=\sum_{n\geq1} a_n \chi(n)$ then, $A_m F$ is obtained by replacing $a_n$ by $0$ whenever $p_j|n$ for some $j>m$. Der $m$te Abschnitt is often a useful tool when we desire to extend results from the finite polydisc $\mathbb{D}^m$ to the infinite polydisc $\mathbb{D}^\infty$. Recall in particular \cite[Thm.~2.1]{BBSS19}, which states that $F\in H^p(\mathbb{D}^\infty)$ if and only if 
\[\sup_{m\geq1} \|A_m F\|_{H^p(\mathbb{D}^m)}<\infty\]
and moreover that if $F\in H^p(\mathbb{D}^\infty)$, then $A_m F \to F$ in the norm of $H^p(\mathbb{D}^\infty)$. The following result is a typical application of der $m$te Abschnitt. We omit the proof, which is a combination of the corresponding result for $H^p(\mathbb{D}^m)$, essentially found in \cite[Sec.~3.4]{Rudin69}, and \cite[Thm.~2.1]{BBSS19}.

\begin{lemma} \label{lem:Hpintdown}
	Fix $0<p<\infty$. Suppose that $F(\chi)$ is given by a convergent power series in $\mathbb{D}^\infty \cap \ell^2$. Then $F\in H^p(\mathbb{D}^\infty)$ if and only if 
	\[\sup_{\sigma>0} \|F_\sigma\|_{H^p(\mathbb{D}^\infty)}<\infty.\] 
	Moreover, if $F\in H^p(\mathbb{D}^\infty)$, then $\|F_\sigma-F\|_{H^p(\mathbb{D}^\infty)}\to 0$ as $\sigma\to 0^+$.
\end{lemma}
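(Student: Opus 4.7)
The plan is to reduce to the finite polydisc via der $m$te Abschnitt and invoke the classical result of \cite[Sec.~3.4]{Rudin69}. Two structural facts drive the argument: first, that $A_m$ and the dilation $F \mapsto F_\sigma$ commute, since both act on Bohr coefficients by pointwise multiplication so that $A_m(F_\sigma) = (A_m F)_\sigma$; and second, that dilation is an $L^p(\mathbb{T}^\infty)$-contraction, i.e., $\sigma \mapsto \|F_\sigma\|_{L^p(\mathbb{T}^\infty)}$ is non-increasing. The latter may be verified at each finite level from plurisubharmonicity of $|A_m F|^p$ on $\mathbb{D}^m$, applied coordinate-by-coordinate to accommodate the non-uniform scalings $r_j = p_j^{-\sigma}$, and then transferred to the infinite polydisc through the fact that $A_m F \to F$ in $H^p(\mathbb{D}^\infty)$.

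For each fixed $m$, $A_m F$ is a power series in $m$ variables. The classical $H^p(\mathbb{D}^m)$ result, though traditionally stated for uniform dilates $G(r\, \cdot\,)$, extends to the non-uniform dilates $G(r_1 \chi_1, \ldots, r_m \chi_m)$ appearing here by repeated application of one-variable subharmonic monotonicity. One obtains that $A_m F \in H^p(\mathbb{D}^m)$ if and only if $\sup_{\sigma > 0}\|(A_m F)_\sigma\|_{H^p(\mathbb{D}^m)} < \infty$, and in that case $(A_m F)_\sigma \to A_m F$ in $H^p(\mathbb{D}^m)$ as $\sigma \to 0^+$.

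To assemble the two implications I close the loop with \cite[Thm.~2.1]{BBSS19}. For the forward direction, the dilation contraction gives $\|F_\sigma\|_{H^p(\mathbb{D}^\infty)} \leq \|F\|_{H^p(\mathbb{D}^\infty)}$ for every $\sigma > 0$ as soon as $F \in H^p(\mathbb{D}^\infty)$. For the converse, suppose $M := \sup_{\sigma > 0}\|F_\sigma\|_{H^p(\mathbb{D}^\infty)} < \infty$. Since conditional expectation onto the first $m$ coordinates is an $L^p$-contraction, $\|(A_m F)_\sigma\|_{H^p(\mathbb{D}^m)} = \|A_m(F_\sigma)\|_{H^p(\mathbb{D}^\infty)} \leq M$ uniformly in $\sigma$ and $m$. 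The finite-dimensional step in the preceding paragraph yields $\|A_m F\|_{H^p(\mathbb{D}^m)} \leq M$ for all $m$, and \cite[Thm.~2.1]{BBSS19} then produces $F \in H^p(\mathbb{D}^\infty)$.

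For the norm convergence I would run the standard $\varepsilon/3$ decomposition
\[
\|F_\sigma - F\|_{H^p(\mathbb{D}^\infty)} \leq \|F_\sigma - (A_m F)_\sigma\|_{H^p(\mathbb{D}^\infty)} + \|(A_m F)_\sigma - A_m F\|_{H^p(\mathbb{D}^\infty)} + \|A_m F - F\|_{H^p(\mathbb{D}^\infty)}.
\]
By the contraction property, the first term is bounded by $\|F - A_m F\|_{H^p(\mathbb{D}^\infty)}$, which together with the third term vanishes as $m \to \infty$ by \cite[Thm.~2.1]{BBSS19}. Once $m$ is chosen so that both are small, the middle term is handled as $\sigma \to 0^+$ by the finite-dimensional convergence. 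The main point requiring care is the non-uniform dilation step in the finite-dimensional result, together with establishing the dilation contraction at the infinite-dimensional level before knowing a priori that $F$ lies in $H^p(\mathbb{D}^\infty)$; once these are in place, the remainder is a mechanical reduction.
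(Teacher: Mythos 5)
Your proof follows exactly the route the paper indicates for this lemma (whose proof it omits): reduce to $H^p(\mathbb{D}^m)$ via der $m$te Abschnitt, adapt the classical dilation result of \cite[Sec.~3.4]{Rudin69} to the non-uniform radii $p_j^{-\sigma}$, and reassemble with \cite[Thm.~2.1]{BBSS19}; the argument is correct. One small repair: justifying $\|A_m G\|_{H^p} \leq \|G\|_{H^p}$ by viewing $A_m$ as a conditional expectation only works for $p \geq 1$; for $0<p<1$ one should instead use the sub-mean value property of the plurisubharmonic function $|G|^p$ in the truncated variables, which gives $|A_m G(\chi)|^p \leq \int |G(\chi_1,\ldots,\chi_m,\chi_{m+1},\ldots)|^p\,dm(\chi_{m+1})\cdots$ --- the same tool you already invoke for the dilation contraction.
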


From Lemma~\ref{lem:Hpintdown} and \eqref{eq:Ffsigma}, we get the formula
\begin{equation} \label{eq:carlsonchi}
	\|f\|_{\mathscr{H}^p}^p = \lim_{\sigma\to0^+} \int_{\mathbb{T}^\infty} |f_\chi(\sigma)|^p\,dm_\infty(\chi).
\end{equation}
From \eqref{eq:carlsonchi}, we can obtain the following analogue of Carlson's formula \eqref{eq:carlson2} for $\mathscr{H}^p$. Namely, if $f$ is a Dirichlet series with $\sigma_{\operatorname{u}}(f) \leq 0$, then
\begin{equation} \label{eq:carlsonp}
	\|f\|_{\mathscr{H}^p}^p = \lim_{\sigma\to0^+} \lim_{T\to\infty} \frac{1}{2T} \int_{-T}^T |f(\sigma+it)|^p \,dt.	
\end{equation}
To obtain \eqref{eq:carlsonp} from \eqref{eq:carlsonchi}, one can either use Birkhoff's ergodic theorem (see the proof of Lemma~\ref{lem:Jessen} below) or a more elementary argument involving the Weierstrass approximation theorem (see~\cite[Sec.~3]{SS09}). 

The following result illustrates that $\mathscr{H}^p$-theory is relevant to the study of composition operators on $\mathscr{H}^2$, since it applies to the Gordon--Hedenmalm class $\mathscr{G}$.

\begin{lemma} \label{lem:map0p1}
	Fix $0<p<1$. Suppose that $\varphi$ is a Dirichlet series with $\sigma_{\operatorname{u}}(\varphi)\leq0$ and that $\varphi(\mathbb{C}_0)\subseteq \mathbb{C}_0$. Then $\varphi\in\mathscr{H}^p$ and 
	\begin{equation} \label{eq:kolmogorov}
		\|\varphi\|_{\mathscr{H}^p}^p \leq \cos\left(\frac{p \pi}{2}\right)^{-1} \big(\mre{\varphi(+\infty)}\big)^p + \big|\mim{\varphi(+\infty)}\big|^p.
	\end{equation}
\end{lemma}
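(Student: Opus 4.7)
Plan: The argument follows the classical Kolmogorov strategy, lifted to the infinite polytorus via the Bohr correspondence. First I would reduce to the case where $\varphi(+\infty)$ is real positive. Setting $a = \mre \varphi(+\infty)$ and $b = \mim \varphi(+\infty)$, the Dirichlet series $\psi := \varphi - ib$ still satisfies $\sigma_{\operatorname{u}}(\psi) \leq 0$ and $\psi(\mathbb{C}_0) \subseteq \mathbb{C}_0$, with $\psi(+\infty) = a > 0$. Since $0 < p < 1$, the $p$-subadditivity of the quasi-norm $\|\cdot\|_{\mathscr{H}^p}^p$ yields
\[\|\varphi\|_{\mathscr{H}^p}^p \leq \|\psi\|_{\mathscr{H}^p}^p + |b|^p,\]
so it suffices to prove $\|\psi\|_{\mathscr{H}^p}^p \leq \sec(p\pi/2)\,a^p$.

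The engine of the proof is the elementary pointwise estimate
\[|w|^p \leq \sec(p\pi/2)\,\mre(w^p), \qquad w \in \overline{\mathbb{C}_0},\]
where $w^p$ is the principal branch; it follows from $\mre(w^p) = |w|^p\cos(p\arg w)$ and $|\arg w| \leq \pi/2$. To bring this to the polytorus, for each $\sigma > 0$ I would pass to the shifted series $\psi_\sigma(s) = \psi(s+\sigma)$, which has $\sigma_{\operatorname{u}}(\psi_\sigma) \leq -\sigma < 0$ and therefore, by Lemma \ref{lem:bohr}, extends to a bounded analytic function on $\mathbb{C}_0$. Its Bohr lift $G_\sigma \in H^\infty(\mathbb{D}^\infty)$ maps $\mathbb{D}^\infty$ into $\overline{\mathbb{C}_0}$, so the composition $G_\sigma^p$ (via the principal branch) also belongs to $H^\infty(\mathbb{D}^\infty)$, and its mean over $\mathbb{T}^\infty$ equals $G_\sigma(0)^p = \psi(+\infty)^p = a^p$. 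Integrating the pointwise inequality over $\mathbb{T}^\infty$ and appealing to the Bohr correspondence gives
\[\|\psi_\sigma\|_{\mathscr{H}^p}^p = \int_{\mathbb{T}^\infty}|G_\sigma|^p\,dm_\infty \leq \sec(p\pi/2)\,\mre(a^p) = \sec(p\pi/2)\,a^p.\]

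Finally I would invoke Lemma \ref{lem:Hpintdown} to transfer the uniform bound $\sup_{\sigma>0}\|\psi_\sigma\|_{\mathscr{H}^p}^p \leq \sec(p\pi/2)\,a^p$ into the desired estimate for $\psi$ itself, which combined with the reduction step yields \eqref{eq:kolmogorov}. I expect the main technical point to be the last step: verifying the standing hypothesis of Lemma \ref{lem:Hpintdown}, that the Bohr lift of $\psi$ defines a convergent power series on $\mathbb{D}^\infty \cap \ell^2$. This is not directly furnished by $\sigma_{\operatorname{u}}(\psi) \leq 0$ (which only gives $\sigma_{\operatorname{a}}(\psi) \leq 1/2$), but should follow from the fact that each $G_\sigma$ is bounded analytic on $\mathbb{D}^\infty \cap \ell^2$ by the Cole--Gamelin theory, together with pointwise convergence of $G_\sigma$ as $\sigma \to 0^+$ under the uniform bound.
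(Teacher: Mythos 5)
Your argument is, at its core, the same Kolmogorov-type argument the paper uses: the reduction to $\varphi(+\infty)>0$ via $p$-subadditivity, the principal branch $\varphi^p$, the pointwise inequality $|w|^p\leq \cos(p\pi/2)^{-1}\mre(w^p)$ on the closed right half-plane, and the identification of the mean of $\varphi^p$ with $\varphi(+\infty)^p$. The one structural difference is where you take that mean: you integrate over $\mathbb{T}^\infty$ via the Bohr lift of the shifted series $\psi_\sigma$, whereas the paper stays on the half-plane, shows $\sigma_{\operatorname{u}}(\varphi^p)\leq 0$ from boundedness on each $\mathbb{C}_\varepsilon$ together with Lemma~\ref{lem:bohr}, and uses almost periodicity to compute $\lim_{T\to\infty}\frac{1}{2T}\int_{-T}^T \varphi^p(\sigma+it)\,dt$ directly for each $\sigma>0$. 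These two averages agree (this is exactly the equivalence of \eqref{eq:carlsonchi} and \eqref{eq:carlsonp} via Birkhoff or Weierstrass), so nothing is gained or lost up to that point.

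The wrinkle is in your final step, and your own diagnosis of it is accurate: Lemma~\ref{lem:Hpintdown} carries the standing hypothesis that the Bohr lift be given by a convergent power series on $\mathbb{D}^\infty\cap\ell^2$, which is not supplied by $\sigma_{\operatorname{u}}(\psi)\leq 0$ alone, and your proposed repair (``pointwise convergence of $G_\sigma$ under the uniform bound'') is the right idea but is not yet an argument. The paper sidesteps this entirely by invoking Carlson's formula \eqref{eq:carlsonp}, whose only hypothesis is $\sigma_{\operatorname{u}}(f)\leq 0$: your uniform bound $\|\psi_\sigma\|_{\mathscr{H}^p}^p\leq \cos(p\pi/2)^{-1}a^p$ is precisely a uniform bound on $\lim_{T\to\infty}\frac{1}{2T}\int_{-T}^T|\psi(\sigma+it)|^p\,dt$, so letting $\sigma\to0^+$ in \eqref{eq:carlsonp} finishes the proof. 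If you prefer to remain on the polytorus, replace Lemma~\ref{lem:Hpintdown} by the Abschnitt criterion \cite[Thm.~2.1]{BBSS19}: for each fixed $m$, $\|A_m G_\sigma\|_{H^p(\mathbb{D}^m)}\leq\|G_\sigma\|_{H^p(\mathbb{D}^\infty)}\leq \cos(p\pi/2)^{-1}a^p$ (monotonicity of $p$-means in each variable), the coefficients of $A_mG_\sigma$ converge to those of $A_mG$ as $\sigma\to0^+$, and normal families plus lower semicontinuity of the $H^p(\mathbb{D}^m)$-quasinorm give $\sup_{m}\|A_mG\|_{H^p(\mathbb{D}^m)}\leq \cos(p\pi/2)^{-1}a^p$, hence $G\in H^p(\mathbb{D}^\infty)$ with the same bound and no a priori convergence on $\mathbb{D}^\infty\cap\ell^2$ is needed.
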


\begin{proof}
	Suppose that $\varphi(s)=\sum_{n\geq1} c_n n^{-s}$, so that $\varphi(+\infty)=c_1 \in \mathbb{C}_0$. We begin by assuming that $c_1>0$. Due to the assumption $\varphi$ does not vanish in $\mathbb{C}_0$, we can define the analytic function $\varphi_p = \varphi^p$ in $\mathbb{C}_0$. Since $\sigma_{\operatorname{u}}(f)\leq 0$, we know that $\varphi$ is bounded on every half-plane $\mathbb{C}_\varepsilon$ for $\varepsilon>0$, and hence the same holds for $\varphi_p$. Moreover, since $\varphi$ converges absolutely in $\mathbb{C}_{1/2}$ by \eqref{eq:au12}, we may expand
	\[\varphi_p(s) = c_1^p \left(1 + c_1^{-p} \sum_{n=2}^\infty c_n n^{-s}\right)^p\]
	as a Dirichlet series for every $s$ with sufficiently large real part. Lemma~\ref{lem:bohr} therefore yields that $\sigma_{\operatorname{u}}(\varphi_p)\leq0$. The uniform convergence implies that for every $\sigma>0$ we have the mean value
	\[\lim_{T\to\infty} \frac{1}{2T} \int_{-T}^T \varphi_p(\sigma+it)\,dt = c_1^p\]
	By definition of $\varphi_p$, we have that $|\Arg{\varphi_p(s)}|\leq p \pi/2$. Taking real parts, we obtain
	\[c_1^p = \lim_{T\to\infty} \frac{1}{2T} \int_{-T}^T \mre{\varphi_p(\sigma+it)}\,dt \geq \lim_{T\to\infty} \frac{1}{2T} \int_{-T}^T |\varphi(\sigma+it)|^p \cos\left(\frac{p \pi}{2}\right)\,dt\]
	for every $\sigma>0$. Letting $\sigma\to0^+$ and using \eqref{eq:carlsonp}, we conclude $\varphi\in\mathscr{H}^p$ and that
	\[\|\varphi\|_{\mathscr{H}^p}^p \leq \cos\left(\frac{p \pi}{2}\right)^{-1} c_1^p,\]
	under the assumption that $\mim{c_1}=0$. Writing $\varphi(s) = \varphi(s)-\mim{c_1} + \mim{c_1}$ and using the triangle inequality for $0<p<1$, we obtain \eqref{eq:kolmogorov}.
\end{proof}

\begin{remark}
The symbols $\varphi_\nu$ from \eqref{eq:varphinu} satisfy the hypotheses of Lemma~\ref{lem:map0p1}, and hence $\varphi_\nu \in \mathscr{H}^p$ for every $0<p<1$, but $\varphi_\nu \notin \mathscr{H}^1$.
\end{remark}

From Lemma~\ref{lem:map0p1} and the fact that elements of $\mathscr{H}^p$ have generalized boundary values almost everywhere on $\mathbb{T}^\infty$, we obtain the following corollary, mentioned near the statement of Theorem~\ref{thm:sharpiro} in the introduction. A different proof of the same result can be found in \cite[Sec.~2]{BP20}.

\begin{corollary} \label{cor:varphiboundary}
	Suppose that $\varphi$ is a Dirichlet series with $\sigma_{\operatorname{u}}(\varphi)\leq0$ and that $\varphi(\mathbb{C}_0)\subseteq \mathbb{C}_0$. Then the generalized boundary value
	\[\varphi^\ast(\chi) = \lim_{\sigma\to0^+} \varphi_\chi(\sigma)\]
	exists for almost every $\chi \in \mathbb{T}^\infty$.
\end{corollary}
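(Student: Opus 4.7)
The plan is to directly combine Lemma~\ref{lem:map0p1} with the general boundary-value theory for $\mathscr{H}^p$ that was summarized in the paragraph following equation \eqref{eq:Ffsigma}. Concretely, I would proceed in two short steps.

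First, I would fix any exponent $0 < p < 1$ (for definiteness, $p = 1/2$) and apply Lemma~\ref{lem:map0p1} to $\varphi$. The hypotheses of that lemma are exactly $\sigma_{\operatorname{u}}(\varphi)\leq 0$ and $\varphi(\mathbb{C}_0) \subseteq \mathbb{C}_0$, so its conclusion gives $\varphi \in \mathscr{H}^p$ with an explicit control on $\|\varphi\|_{\mathscr{H}^p}$ in terms of $\varphi(+\infty)$. No further work is required at this step.

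Second, I would invoke the general fact, recalled explicitly in the paragraph preceding \eqref{eq:Ffsigma} (and based on \cite[Thm.~5]{Bayart02} together with the extension of its proof to $0 < p < 1$ via \cite[Sec.~2]{BBSS19}), that for every $f \in \mathscr{H}^p$ the abscissa $\sigma_{\operatorname{c}}(f_\chi) \leq 0$ for $m_\infty$-a.e.\ $\chi \in \mathbb{T}^\infty$ and, moreover, that the generalized boundary value
\[f^\ast(\chi) = \lim_{\sigma \to 0^+} f_\chi(\sigma)\]
exists for $m_\infty$-a.e.\ $\chi \in \mathbb{T}^\infty$. Applying this to $f = \varphi \in \mathscr{H}^p$ immediately yields the existence of $\varphi^\ast(\chi)$ almost everywhere, which is the desired conclusion.

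There is essentially no obstacle beyond making sure the two invoked results apply in the claimed generality; the only subtle point is that the boundary-value existence must be used in the range $0 < p < 1$, which is why the proof relies on Lemma~\ref{lem:map0p1} (giving $\mathscr{H}^p$-membership for small $p$) rather than, say, trying to place $\varphi$ in $\mathscr{H}^1$, where it need not lie as noted in the remark following Lemma~\ref{lem:map0p1}. Thus the corollary is just the combination of these two ingredients, and the argument is essentially one paragraph long.
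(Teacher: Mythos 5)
Your proof is correct and is exactly the paper's argument: the corollary is stated as an immediate consequence of Lemma~\ref{lem:map0p1} (giving $\varphi\in\mathscr{H}^p$ for $0<p<1$) combined with the almost-everywhere existence of generalized boundary values for $\mathscr{H}^p$ functions recalled in Section~\ref{sec:convexity}. Your remark about needing the range $0<p<1$ rather than $p=1$ is also the right observation.
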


Our next goal is to investigate integrals involving $\log|F_\sigma|$, for $F \in H^p(\mathbb{D}^\infty)$. In particular, we want to prove that such integrals are convex non-increasing functions of $\sigma$. In the case of $H^p(\mathbb{D})$, such results are typically obtained as consequences of subharmonicity (see e.g.~\cite[Sec.~1.3--1.4]{Duren70}). However, iteration of one-variable convexity does not seem to yield the result that we desire. In our next two results, we will therefore instead adapt an argument due to Hardy \cite{Hardy15}.

\begin{lemma} \label{lem:hardy}
	Suppose that $g$ is analytic in $\mathbb{C}_0$ and that $\psi \colon \mathbb{R} \to \mathbb{R}$ is a $C^2$ function such that $\operatorname{supp}(\psi') \cap (-\infty, 0]$ is compact. For every $s \in \mathbb{C}_0$,
	\[\Delta\, \psi(\log{|g(s)|}) = \left|\frac{g'(s)}{g(s)} \right|^2\psi''(\log |g(s)|)\]
	where $\Delta = \frac{\partial^2}{\partial \sigma^2} + \frac{\partial^2}{\partial t^2}$.
\end{lemma}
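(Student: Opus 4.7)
The plan is to split the argument at the zero set of $g$, since $\log|g|$ fails to be defined where $g$ vanishes, and the support hypothesis on $\psi'$ is precisely what is needed to smoothly extend $\psi \circ \log|g|$ across those zeros.

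First, I would handle a point $s_0 \in \mathbb{C}_0$ with $g(s_0) \neq 0$. Choose a simply connected neighborhood $U \ni s_0$ on which $g$ does not vanish, and pick an analytic branch $h = \log g = u + iv$ on $U$, so that $u = \log|g|$ is harmonic. Applying the usual chain rule for the Laplacian of a smooth composition yields
\[\Delta\bigl(\psi(u)\bigr) = \psi'(u)\, \Delta u + \psi''(u)\, |\nabla u|^2 = \psi''(u)\, |\nabla u|^2,\]
since $\Delta u = 0$. The Cauchy--Riemann equations give $u_\sigma = v_t$ and $u_t = -v_\sigma$, so $|\nabla u|^2 = u_\sigma^2 + v_\sigma^2 = |h'(s)|^2 = |g'(s)/g(s)|^2$. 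This delivers the formula on $U$.

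Second, I would handle a point $s_0$ where $g(s_0) = 0$. By hypothesis there exists $M > 0$ with $\psi'(x) = 0$ for $x \leq -M$, so $\psi$ is constant on $(-\infty, -M]$ and $\psi''$ vanishes on the open interval $(-\infty, -M)$. Since $g$ is analytic with $g(s_0) = 0$, continuity gives a neighborhood $V$ of $s_0$ on which $|g(s)| \leq e^{-M-1}$, i.e.\ $\log|g(s)| \leq -M-1$ for $s \in V \setminus g^{-1}(\{0\})$, with the natural convention $\log 0 = -\infty$. On $V$ the function $\psi(\log|g(s)|)$ is therefore identically equal to the constant value of $\psi$ on $(-\infty,-M]$; in particular it extends smoothly to all of $V$, with $\Delta(\psi(\log|g|)) \equiv 0$ there. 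Likewise $\psi''(\log|g(s)|) = 0$ throughout $V$, so the right-hand side is also zero (interpreted so that the factor $|g'/g|^2$, which may blow up at zeros of $g$, is multiplied by the identically vanishing factor $\psi''(\log|g|)$ in the relevant neighborhood, giving $0$).

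The only real obstacle is the interpretation at zeros of $g$, and the support condition on $\psi'$ resolves it cleanly by reducing the question near a zero to the trivial identity $0 = 0$. Away from zeros, the result is a standard chain rule calculation combined with Cauchy--Riemann. No additional analytic input is needed, so this amounts mostly to bookkeeping rather than a substantive estimate.
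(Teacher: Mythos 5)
Your proof is correct and follows essentially the same route as the paper: a chain-rule computation away from the zeros of $g$ (the paper factors $\Delta = \partial_{\overline{s}}\partial_s$, while you use $\Delta(\psi(u)) = \psi'(u)\Delta u + \psi''(u)|\nabla u|^2$ with $u = \log|g|$ harmonic, which is the same calculation in real rather than Wirtinger coordinates), together with the observation that the support condition on $\psi'$ makes both sides vanish identically near any zero of $g$. Your treatment of the zero set is in fact slightly more explicit than the paper's.
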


\begin{proof}
	The idea is to factor $\Delta = \partial_{\overline{s}} \partial_s$, where $\partial_{\overline{s}} = \frac{\partial}{\partial \sigma} + i \frac{\partial}{\partial t}$ and $\partial_s = \frac{\partial}{\partial \sigma} - i \frac{\partial}{\partial t}$. By the chain rule and the Cauchy--Riemann equations, we get
	\[\partial_s  \psi(\log{|g(s)|}) = \psi'(\log{|g(s)|}) \, \partial_s  \log|g(s)| 	= \psi'(\log{|g(s)|}) \,\frac{g'(s)}{g(s)},\]
	since $\log{|g(s)|}=\mre{\log{g(s)}}$. Note that by the assumptions on $\psi$, both sides of this equation are identically $0$ near any point $s\in\mathbb{C}_0$ where $g(s)=0$. By the product rule and the chain rule, we then have
	\[\Delta\, \psi(\log{|g(s)|}) = \partial_{\overline{s}} \left(\psi'(\log{|g(s)|}) \,\frac{g'(s)}{g(s)}\right) = \psi''(\log|g(s)|) \, \left|\frac{g'(s)}{g(s)} \right|^2\]
	where we in the final equality used that $\partial_{\overline{s}} \log|g(s)|=\overline{\partial_{s} \log|g(s)|}$ and the Cauchy--Riemann equations. 
\end{proof}

\begin{lemma} \label{lem:convex}
	Fix $F\in H^p(\mathbb{D}^\infty)$ for some $0<p<\infty$. Let $\psi \colon \mathbb{R} \to \mathbb{R}$ be a $C^2$ function such that $\operatorname{supp}(\psi') \cap (-\infty, 0]$ is compact and let $m\geq1$. For $\sigma>0$, set
	\[\Psi_m(\sigma) = \int_{\mathbb{T}^\infty} \psi(\log{|A_m F_\sigma(\chi)|})\,dm_\infty(\chi).\]
	If $\psi$ is convex, then $\Psi_m$ is convex and if $\log\circ\, \psi$ is convex, then $\log \circ\,\Psi_m$ is convex.
\end{lemma}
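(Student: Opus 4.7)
The plan is to realize $\Psi_m(\sigma)$ as a vertical mean of a subharmonic function on $\mathbb{C}_0$, and then adapt Hardy's classical argument. Let $g(s) = A_m f(s)$ be the Dirichlet polynomial in the primes $p_1, \dots, p_m$ corresponding to $A_m F$. Since $g$ involves only finitely many prime frequencies, $g$ and all its derivatives are bounded and almost periodic on every closed half-plane $\overline{\mathbb{C}_\varepsilon}$, $\varepsilon > 0$. Kronecker's equidistribution theorem applied to the continuous function $\chi \mapsto \psi(\log|A_m F_\sigma(\chi)|)$ on $\mathbb{T}^m$ then yields
\[
\Psi_m(\sigma) = \lim_{T\to\infty} \frac{1}{2T}\int_{-T}^{T} \psi(\log|g(\sigma+it)|)\,dt, \qquad \sigma > 0.
\]
Here continuity at zeros of $A_m F_\sigma$ is guaranteed by the support hypothesis: there is an $M > 0$ with $\psi' \equiv 0$ on $(-\infty,-M]$, so $\psi$ extends continuously to $-\infty$ by a constant.

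Now set $h(s) = \psi(\log|g(s)|)$. Because $\psi$ is constant on $(-\infty,-M]$, $h$ is constant in a neighborhood of each zero of $g$, so $h \in C^2(\mathbb{C}_0)$. Lemma~\ref{lem:hardy} then gives
\[
\Delta h(s) = \psi''(\log|g(s)|)\left|\frac{g'(s)}{g(s)}\right|^2,
\]
to be interpreted as $0$ on the zero set of $g$. When $\psi$ is convex this is $\geq 0$, so $h$ is subharmonic. To deduce convexity of $\Psi_m$ I would write $\partial_\sigma^2 = \Delta - \partial_t^2$ inside the vertical mean: since $\partial_t h = \psi'(\log|g|)\,\mim(g'/g)$ is bounded on every $\overline{\mathbb{C}_\varepsilon}$, the boundary contributions $[\partial_t h(\sigma+iT) - \partial_t h(\sigma-iT)]/(2T)$ vanish as $T\to\infty$, leaving
\[
\Psi_m''(\sigma) = \int_{\mathbb{T}^\infty} \psi''(\log|A_m F_\sigma|) \left|\frac{(A_m F_\sigma)'}{A_m F_\sigma}\right|^2\,dm_\infty \;\geq\; 0.
\]
The cleanest way to make this rigorous is to pair against a test function $\eta \in C_c^\infty(0,\infty)$ and integrate by parts, obtaining $\Psi_m'' \geq 0$ as a distribution.

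For the log-convexity, the analogous computation gives
\[
\Psi_m'(\sigma) = \int_{\mathbb{T}^\infty} \psi'(\log|A_m F_\sigma|)\,\mre\!\left(\frac{(A_m F_\sigma)'}{A_m F_\sigma}\right) dm_\infty.
\]
Applying the Cauchy--Schwarz inequality with the factorization $\psi' \mre(g'/g) = \sqrt{\psi}\cdot(\psi'/\sqrt{\psi})\,\mre(g'/g)$ yields
\[
(\Psi_m'(\sigma))^2 \leq \Psi_m(\sigma) \cdot \int_{\mathbb{T}^\infty} \frac{(\psi')^2}{\psi}(\log|A_m F_\sigma|)\,\mre\!\left(\frac{(A_m F_\sigma)'}{A_m F_\sigma}\right)^{\!2} dm_\infty.
\]
Convexity of $\log \circ\, \psi$ is precisely the pointwise inequality $(\psi')^2 \leq \psi\,\psi''$ (which in particular forces $\psi > 0$), while $\mre(g'/g)^2 \leq |g'/g|^2$. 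Combining these with the formula for $\Psi_m''$ above gives $(\Psi_m'(\sigma))^2 \leq \Psi_m(\sigma)\,\Psi_m''(\sigma)$, which is the desired log-convexity $(\log \Psi_m)'' \geq 0$.

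The main obstacle is the justification of the various exchanges of $\sigma$-differentiation with the integral over $\mathbb{T}^\infty$ and the vertical limit $T\to\infty$. This is where it is essential that $g$ involves only the first $m$ primes: $g, g', g''$ are uniformly bounded on each $\overline{\mathbb{C}_\varepsilon}$, and the compact support hypothesis on $\psi'$ forces $\psi, \psi', \psi''$ evaluated at $\log|g|$ to remain uniformly bounded (since $\log|g|$ is bounded above and $\psi'$ vanishes far to the left). Dominated convergence then handles the interchanges, with the distributional formulation above providing a fallback that avoids differentiating the limit directly.
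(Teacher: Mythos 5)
Your proof is correct and rests on the same two pillars as the paper's: the identity of Lemma~\ref{lem:hardy} for $\Delta\,\psi(\log|g|)$, and the Cauchy--Schwarz factorization $\psi'=\sqrt{\psi}\cdot(\psi'/\sqrt{\psi})$ together with $(\psi')^2\le\psi\psi''$ for the log-convexity. The one place where you take a genuinely different route is the reduction of $\Delta$ to $\partial_\sigma^2$. The paper keeps the integral over $\mathbb{T}^\infty$ and observes that, by rotation invariance of $m_\infty$ in each coordinate, $s\mapsto\int_{\mathbb{T}^\infty}\psi(\log|A_mf_\chi(s)|)\,dm_\infty(\chi)$ depends only on $\mre{s}$; hence $\Delta\Psi_m=\partial_\sigma^2\Psi_m$ with no boundary terms to control. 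You instead pass to a vertical Ces\`aro mean via Kronecker--Weyl equidistribution and kill the $\partial_t^2$ contribution through boundary terms of size $O(1/T)$; this works (your observation that $\psi'(\log|g|)$ vanishes near the zeros of $g$ is exactly what makes $\partial_t h$ bounded), but it is more laborious and, as you note, forces a distributional detour. One small internal wrinkle: in the log-convexity step you invoke ``the formula for $\Psi_m''$ above'' as a pointwise identity, whereas your convexity argument only produced it distributionally. This is easily repaired: the uniform bounds you list in the last paragraph (valid because $A_mF$ is holomorphic on $\mathbb{D}^m$ and $(p_1^{-s},\dots,p_m^{-s})$ stays in a compact polydisc for $\mre{s}\ge\varepsilon$; note that $A_mf$ is a Dirichlet series in finitely many prime frequencies, not a polynomial) justify differentiating twice under the $\mathbb{T}^\infty$-integral directly, which yields both pointwise formulas and renders the vertical-mean detour unnecessary, exactly as in the paper.
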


\begin{proof}
	Recalling the notation of \eqref{eq:Ffsigma}, we see that $g(s)=A_m f_\chi(s)$ is absolutely convergent in $\mathbb{C}_0$ for any fixed $m\geq1$ and every $\chi\in\mathbb{T}^\infty$. Combining this with the assumptions on $\psi$, the integral
	\[\Psi_m(s) = \int_{\mathbb{T}^\infty} \psi(\log{|g(s)|})\,dm_\infty(\chi).\]
	exists for every $s \in \mathbb{C}_0$. Moreover, since $dm_\infty$ is invariant under rotations in each variable, we actually see that $\Psi_m(s)=\Psi_m(\sigma)$ where $\sigma=\mre{s}$. Applying Lemma~\ref{lem:hardy} to $\psi(\log|g(s)|)$ and integrating over $\chi$, we find that
	\[\Delta\, \Psi_m(s) = \int_{\mathbb{T}^\infty} \Delta\, \psi(\log|g(s)|)\,dm_\infty(\chi) = \int_{\mathbb{T}^\infty} \psi''(\log|g(s)|)\left|\frac{g'(s)}{g(s)}\right|^2\,dm_\infty(\chi).\]
	However, recalling that $\Psi_m(\sigma+it)$ is constant as a function of $t$, we find that $\Delta\, \Psi_m(s) = \frac{\partial^2}{\partial \sigma^2 }\Psi_m(\sigma)$. Hence we have found that
	\begin{equation} \label{eq:convex1}
		\frac{\partial^2}{\partial \sigma^2 } \Psi_m(\sigma) = \int_{\mathbb{T}^\infty} \psi''(\log{|A_m F_\sigma(\chi)|})\left|\frac{\frac{\partial}{\partial \sigma} A_m F_\sigma(\chi)}{A_m F_\sigma(\chi)}\right|^2\,dm_\infty(\chi).
	\end{equation}
	If the $C^2$ function $\psi$ is convex, then $\psi''\geq0$, and we see from \eqref{eq:convex1} that $\Psi_m$ is convex. 

	To prove the second statement, we again use that $\Psi_m(s)=\Psi_m(\sigma)$ so that, with $\partial_s$ as in the proof of Lemma~\ref{lem:hardy}, 
	\begin{equation} \label{eq:convex2}
		\frac{\partial}{\partial \sigma} \Psi_m(\sigma) = \partial_s \Psi_m(s) = \int_{\mathbb{T}^\infty} \psi'(\log{|A_m F_\sigma(\chi)|})\,\frac{\frac{\partial}{\partial \sigma} A_m F_\sigma(\chi)}{A_m F_\sigma(\chi)}\,dm_\infty(\chi).
	\end{equation}
	If the $C^2$ function $\psi$ is log-convex, then  $(\psi')^2 \leq \psi \psi''$. 
By applying the Cauchy--Schwarz inequality on \eqref{eq:convex2} and using \eqref{eq:convex1} we see that also $(\Psi'_m)^2 \leq \Psi_m \Psi''_m$, so that $\Psi_m$ is log-convex.
\end{proof}

We are now in a position to obtain our result for the integrals of $\log{|F_\sigma|}$. We begin with the following basic result.

\begin{lemma} \label{lem:Nout}
	Suppose that $F\in H^p(\mathbb{D}^\infty)$ for some $0<p<\infty$, and that $F\not\equiv0$. Then there is a positive integer $N$ and a constant $a_N\neq0$ such that
	\[\int_{\mathbb{T}^\infty} \log{|F_\sigma(\chi)|}\,dm_\infty(\chi) = -\sigma \log{N} + \log{|a_N|}\]
	for all sufficiently large $\sigma>0$.
\end{lemma}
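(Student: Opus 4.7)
The plan is to isolate the leading nonvanishing term of the Dirichlet series $f(s)=\sum_{n\geq 1} a_n n^{-s}$ corresponding to $F$ under the Bohr correspondence. Let $N$ be the smallest positive integer with $a_N\neq 0$. Since $\mathscr{H}^p$ consists of Dirichlet series that are absolutely convergent in $\mathbb{C}_{1/2}$, for any $\sigma>1/2$ I can factor out the leading term as
\[
F_\sigma(\chi)=a_N\chi(N)N^{-\sigma}\bigl(1+h_\sigma(\chi)\bigr),\qquad h_\sigma(\chi)=\sum_{n>N}\frac{a_n}{a_N}\left(\frac{N}{n}\right)^{\sigma}\frac{\chi(n)}{\chi(N)}.
\]
An elementary comparison with any $\sigma_1>1/2$ gives $\sum_{n>N}|a_n/a_N|(N/n)^{\sigma}\to 0$ as $\sigma\to\infty$, so there is some $\sigma_0$ such that $|h_\sigma(\chi)|<1$ uniformly for $\chi\in\mathbb{T}^\infty$ and $\sigma\geq\sigma_0$. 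Using $|\chi(N)|=1$ and integrating over $\mathbb{T}^\infty$ then reduces the lemma to showing
\[
\int_{\mathbb{T}^\infty}\log|1+h_\sigma(\chi)|\,dm_\infty(\chi)=0\qquad\text{for } \sigma\geq \sigma_0.
\]

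The key step is to expand the principal branch of the logarithm as an absolutely convergent power series and show every moment vanishes:
\[
\int_{\mathbb{T}^\infty}\log(1+h_\sigma(\chi))\,dm_\infty(\chi)=\sum_{k\geq 1}\frac{(-1)^{k-1}}{k}\int_{\mathbb{T}^\infty}h_\sigma(\chi)^k\,dm_\infty(\chi).
\]
Taking real parts of this identity will yield the target formula. Multiplying out $h_\sigma^k$ produces an absolutely convergent sum over tuples $(n_1,\ldots,n_k)$ with each $n_i>N$, and each summand is a coefficient times the character $\chi(n_1\cdots n_k)/\chi(N)^k$, which is a Laurent monomial $\prod_j \chi_j^{\gamma_j}$ in finitely many variables with integer exponents.

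The central observation is that by orthogonality of the characters on $\mathbb{T}^\infty$, the integral of such a monomial vanishes unless every $\gamma_j=0$, i.e.\ unless $n_1n_2\cdots n_k=N^k$. But each $n_i\geq N+1$ forces $n_1\cdots n_k\geq (N+1)^k>N^k$, ruling out any contribution and giving $\int_{\mathbb{T}^\infty}h_\sigma^k\,dm_\infty=0$ for every $k\geq 1$. I do not expect a serious obstacle here; the only technical point is to justify the interchange of summation and integration in the two series, which is immediate because on $\mathbb{T}^\infty$ both the multinomial expansion of $h_\sigma^k$ and the logarithm expansion are uniformly absolutely convergent for $\sigma\geq\sigma_0$ by the bound $|h_\sigma|\leq r<1$.
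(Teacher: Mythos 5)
Your proof is correct and follows essentially the same route as the paper: factor out the leading term $a_N\chi(N)N^{-\sigma}$, use absolute convergence for large $\sigma$ to bound the remainder below $1$ in modulus, expand the logarithm, and kill every moment $\int_{\mathbb{T}^\infty} h_\sigma^k\,dm_\infty$ by orthogonality, since $h_\sigma^k$ only involves characters $\chi(n_1\cdots n_k)\overline{\chi(N^k)}$ with $n_1\cdots n_k\geq (N+1)^k>N^k$. The paper's proof is identical in substance, down to the same orthogonality observation.
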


\begin{proof}
	Since $F\not\equiv 0$, there is some integer $N$ such that $F(\chi) =\sum_{n\geq N} a_n \chi(n)$ and $a_N \neq 0$. We write
	\[F_\sigma(\chi) = N^{-\sigma} a_N \chi(N) \big(1 + \overline{\chi(N)}a_N^{-1} G_\sigma(\chi)\big), \qquad G_\sigma(\chi) = \sum_{n=N+1}^\infty \left(\frac{N}{n}\right)^\sigma a_n \chi(n).\]
	Since $F\in H^p(\mathbb{D}^\infty)$, there is a $\sigma_0 > 0$ such that if $\sigma\geq \sigma_0$, then $G_\sigma$ converges absolutely for $\chi\in\mathbb{T}^\infty$ and satisfies $|G_\sigma(\chi)|\leq |a_N|/2$. Hence we may expand the logarithm to obtain that
	\[\log\big(1 + \overline{\chi(N)}a_N^{-1} G_\sigma(\chi)\big) = \sum_{k=1}^\infty \frac{(-1)^{k-1}}{k}\big(\overline{\chi(N)}a_N^{-1} G_\sigma(\chi)\big)^k.\]
	However, for every $k\geq1$
	\[\int_{\mathbb{T}^\infty}\big(\overline{\chi(N)}a_N^{-1} G_\sigma(\chi)\big)^k\,dm_\infty(\chi)=0.\]
	To see this, note that $\chi(N)^k = \chi(N^k)$, by the fact that $\chi$ is completely multiplicative, while the expansion of $G_\sigma^k$ only contains characters $\chi(n)$ with $n\geq(N+1)^k$. The proof is completed by using  
	\[\int_{\mathbb{T}^\infty} \log\big|1 + \overline{\chi(N)}a_N^{-1} G_\sigma(\chi)\big|\,dm_\infty(\chi) =\mre{\int_{\mathbb{T}^\infty} \log\big(1 + \overline{\chi(N)}a_N^{-1} G_\sigma(\chi)\big)\,dm_\infty(\chi)},\]
	the fact that $|\chi(N)|=1$ and the relationship between $F_\sigma$ and $G_\sigma$.
\end{proof}

We require some notation. For $x \geq 0$, set $\log^+{x}=\max(\log{x},0)$ and $\log^-{x} = \log^+{x}-\log{x}$. We will have use of the elementary inequality 
\begin{equation} \label{eq:log+ineq}
	\left|\log^+{x} - \log^+{y}\right| \leq \frac{|x-y|^p}{p},
\end{equation} 
valid for $x,y \geq 0$ and $0 < p \leq 1$.

\begin{theorem} \label{thm:logintdown}
	Suppose that $F\in H^p(\mathbb{D}^\infty)$ for some $0<p<\infty$, and that $F\not\equiv0$. Then $\log|F_\sigma|\in L^1(\mathbb{T}^\infty)$ for every $\sigma\geq0$ and
	\begin{equation} \label{eq:logintdown}
		\int_{\mathbb{T}^\infty} \log{|F_\sigma(\chi)|}\,dm_\infty(\chi)
	\end{equation}
	is a convex non-increasing function of $\sigma \geq 0$.
\end{theorem}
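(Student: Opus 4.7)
The strategy is to apply Lemma~\ref{lem:convex} to the approximants $A_m F$ with a family of $C^2$ convex truncations of the identity, remove the truncation via monotone convergence to obtain convexity of $\Psi_m(\sigma) := \int_{\mathbb{T}^\infty} \log|A_m F_\sigma|\,dm_\infty$, and then pass $m \to \infty$. The natural choice $\psi(x) = x$ fails the support condition in Lemma~\ref{lem:convex}, so I would employ a family $\{\psi_R\}_{R\geq 1}$ of $C^2$ convex non-decreasing functions with $\psi_R(x) = x$ for $x \geq 0$ and $\psi_R(x) = -R$ for $x \leq -2R$, chosen so that $\psi_R(x) \searrow x$ as $R \to \infty$. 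Lemma~\ref{lem:convex} then gives convexity of $\Psi_{R,m}(\sigma) = \int \psi_R(\log|A_m F_\sigma|)\,dm_\infty$ on $(0,\infty)$. Since $A_m F_\sigma$ is bounded on $\mathbb{T}^\infty$ for $\sigma > 0$ and $A_m F_\sigma \not\equiv 0$ (for $m$ large), classical finite-dimensional log-integrability (iterated one-variable Jensen on $\mathbb{D}^m$) ensures $\log|A_m F_\sigma| \in L^1$, so monotone convergence lets me send $R \to \infty$ and obtain convexity of $\Psi_m$. Applying Lemma~\ref{lem:Nout} to $A_m F$ for $m$ large enough to capture the first nonzero coefficient $a_N$ of $F$, the asymptotic slope of $\Psi_m$ at $+\infty$ is $-\log N \leq 0$, and convexity then forces $\Psi_m$ to be non-increasing on $(0,\infty)$.

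Next I would pass $m \to \infty$. A careful reading of the proof of Lemma~\ref{lem:Nout} shows that the threshold $\sigma_0$ can be chosen independently of $m$, since the absolute tail estimate only improves under $A_m$; hence $\Psi_m(\sigma) = -\sigma \log N + \log|a_N|$ for $\sigma \geq \sigma_0$ and all sufficiently large $m$. The tangent-line inequality at $\sigma_0$ yields the uniform lower bound $\Psi_m(\sigma) \geq -\sigma \log N + \log|a_N|$ on $(0,\sigma_0]$, while Jensen's inequality combined with the uniform control on $\|A_m F\|_{H^p}$ provides a uniform upper bound. Helly's selection theorem then extracts a subsequential convex non-increasing pointwise limit on $(0,\infty)$. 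To identify this limit as $\Psi_\infty(\sigma) := \int \log|F_\sigma|\,dm_\infty$, combine the elementary inequality \eqref{eq:log+ineq} with the convergence $A_m F_\sigma \to F_\sigma$ in $L^p(\mathbb{T}^\infty)$ to obtain $\int \log^+|A_m F_\sigma|\,dm_\infty \to \int \log^+|F_\sigma|\,dm_\infty$, and control $\log^-$ from one side via Fatou.

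Finally, for the boundary value $\sigma = 0$, the convexity and monotonicity of $\Psi_\infty$ on $(0,\infty)$ ensure the existence of $L := \lim_{\sigma \to 0^+} \Psi_\infty(\sigma) \in (-\infty,+\infty]$. Lemma~\ref{lem:Hpintdown} gives $F_\sigma \to F$ in $H^p(\mathbb{D}^\infty)$ as $\sigma \to 0^+$, and a final application of \eqref{eq:log+ineq} paired with Fatou for $\log^-$ yields $L \geq \int \log|F|\,dm_\infty$; since $\log^+|F| \in L^1$ follows from $|F| \in L^p$, we conclude that $\log|F| \in L^1(\mathbb{T}^\infty)$ and that the convexity and monotonicity extend continuously to $\sigma = 0$. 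The principal obstacle I foresee lies in the identification step when passing $m \to \infty$: Fatou immediately gives $\limsup_m \Psi_m(\sigma) \leq \Psi_\infty(\sigma)$, but the reverse inequality $\Psi_\infty(\sigma) \leq \liminf_m \Psi_m(\sigma)$ is more subtle. It likely requires either a uniform-in-$m$ control on $\log^-|A_m F_\sigma|$ (perhaps via an outer/Blaschke-style decomposition on $\mathbb{D}^\infty$, or by exploiting the specific structure of the approximation $A_m F \to F$ through the Littlewood--Paley formula and the fact that $A_m$ is a conditional expectation on $\mathbb{T}^\infty$).
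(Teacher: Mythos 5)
Your overall architecture matches the paper's --- Lemma~\ref{lem:convex} applied to $A_mF$ with smooth convex truncations, Lemma~\ref{lem:Nout} to pin down the behaviour at $+\infty$ and deduce monotonicity, and \eqref{eq:log+ineq} plus Fatou at $\sigma=0$ --- but you take the two iterated limits (truncation removal and $m\to\infty$) in the opposite order from the paper, and this is where the argument breaks. Writing $\Psi_m(\sigma)=\int_{\mathbb{T}^\infty}\log|A_mF_\sigma|\,dm_\infty$ and $\Psi_\infty(\sigma)=\int_{\mathbb{T}^\infty}\log|F_\sigma|\,dm_\infty$ as you do: after sending $R\to\infty$ at fixed $m$ you are left needing $\Psi_\infty(\sigma)\le\liminf_m\Psi_m(\sigma)$, i.e.\ that no mass of $\log^-|A_mF_\sigma|$ escapes in the limit, and none of the remedies you sketch supplies this. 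There is no inner--outer or Blaschke-type factorization on $\mathbb{D}^\infty$ (nor even a canonical one on $\mathbb{D}^m$ for $m\ge2$), and the conditional-expectation structure of $A_m$ only yields the inequality in the direction you already have: since $A_mF_\sigma$ is the conditional expectation of $F_\sigma$ onto $(\chi_1,\dots,\chi_m)$ and $\log|\cdot|$ of an analytic function is plurisubharmonic, one gets $E[\log|F_\sigma|\mid\chi_1,\dots,\chi_m]\ge\log|A_mF_\sigma|$ pointwise, hence $\Psi_m(\sigma)\le\Psi_\infty(\sigma)$ --- the same one-sided bound that Fatou gives. Your uniform affine lower bound $\Psi_m(\sigma)\ge-\sigma\log N+\log|a_N|$ is correct and does yield uniform $L^1$ bounds on $\log^-|A_mF_\sigma|$, hence (via Fatou) that $\log|F_\sigma|\in L^1(\mathbb{T}^\infty)$; but $L^1$-boundedness is not uniform integrability, so the Helly limit $g$ is only known to satisfy $g\le\Psi_\infty$, and the convexity of $g$ does not transfer to $\Psi_\infty$.

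The paper closes exactly this gap by interchanging the limits. For a fixed truncation level $\beta<0$, the integrand $\log_\beta|A_mF_\sigma|=\max(\log|A_mF_\sigma|,\beta)$ is dominated in absolute value by $\max\bigl(|\beta|,\tfrac1p|A_mF_\sigma|^p\bigr)$ by \eqref{eq:log+ineq}, and since $A_mF_\sigma\to F_\sigma$ in $L^p(\mathbb{T}^\infty)$ this family is uniformly integrable in $m$; hence the truncated integrals converge as $m\to\infty$ and convexity in $\sigma$ survives the limit. Only afterwards is the truncation removed by letting $\beta\to-\infty$: this is a decreasing limit of convex functions, which remains convex and finite because it is finite for all large $\sigma$ by Lemma~\ref{lem:Nout} and a convex function finite at two points is bounded below by the corresponding affine secant everywhere. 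If you reorder your limits in this way, the remainder of your argument --- monotonicity from Lemma~\ref{lem:Nout} plus convexity of the right-derivative, and the endpoint $\sigma=0$ via Lemma~\ref{lem:Hpintdown}, \eqref{eq:log+ineq} and Fatou --- goes through essentially as you wrote it.
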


\begin{proof}
	Fix some $F\not \equiv 0$ in $H^p(\mathbb{D}^\infty)$. There is some $m_0$ such that if $m\geq m_0$, then $A_m F \not \equiv 0$. For a given $-\infty < \beta < 0$, choose a sequence $(\psi_j)_{j\geq1}$ of convex $C^2$ functions such that $\operatorname{supp}(\psi_j') \cap (-\infty, 0]$ is compact, $\beta - 1 \leq \psi_j(x) \leq \max(x,\beta+1)$ for all $x$, and $\psi_j(x) \to \max(x, \beta)$ point-wise as $j\to\infty$. For $m\geq m_0$, we define
	\[\Psi_{m,j}(\sigma) = \int_{\mathbb{T}^\infty} \psi_j(\log|A_m F_\sigma(\chi)|)\,dm_\infty(\chi).\]
	By Lemma~\ref{lem:convex}, we see that $\Psi_{m,j}$ is a convex function of $\sigma > 0$. For $-\infty < \beta < 0$, set $\log_{\beta} x = \max(\log x, \beta)$. By taking $j\to\infty$, we conclude that
	\begin{equation} \label{eq:betam}
		\int_{\mathbb{T}^\infty} \log_\beta{|A_m F_\sigma(\chi)|}\,dm_\infty(\chi).
	\end{equation}
	is convex function of $\sigma>0$. Note that by \eqref{eq:log+ineq}, 
	\[\left|\log_\beta |A_m F_{\sigma}(\chi)|\right| \leq \max\left(\log^+ |A_m F_{\sigma}(\chi)|,\, |\beta|\right) \leq \frac{1}{p} \max\left( |A_m F_{\sigma}(\chi)|^p,\, p|\beta|\right).\]
	Recall that $F_\sigma \in H^p(\mathbb{D}^\infty)$ for every $\sigma>0$ by Lemma~\ref{lem:Hpintdown}, and that $A_m F_\sigma \to F_\sigma$ in $H^p(\mathbb{D}^\infty)$ as $m \to \infty$, by \cite[Thm.~2.1]{BBSS19}. This implies that $\log_\beta |A_m F_{\sigma}|$ is uniformly integrable in $m\geq1$, for any $\sigma > 0$. Therefore $\log_\beta |F_{\sigma}|\in L^1(\mathbb{T}^\infty)$ for $\sigma > 0$, and taking $m \to \infty$ in \eqref{eq:betam} yields that
	\begin{equation} \label{eq:betaconvex1}
		\int_{\mathbb{T}^\infty} \log_\beta{|F_\sigma(\chi)|}\,dm_\infty(\chi)
	\end{equation}
	is a convex function of $\sigma>0$. We now let $\beta \to -\infty$. Since \eqref{eq:logintdown} is finite for all sufficiently large $\sigma$ by Lemma~\ref{lem:Nout}, the monotone convergence theorem and the convexity of \eqref{eq:betaconvex1} implies that $\log {|F_\sigma|} \in  L^1(\mathbb{T}^\infty)$ for all $\sigma > 0$, and that \eqref{eq:logintdown} is a convex function of $\sigma>0$. 
	
	Next, let us prove that \eqref{eq:logintdown} is non-increasing. By Lemma~\ref{lem:Nout} we see that this holds for all sufficiently large $\sigma$. However, by convexity, we know that \eqref{eq:logintdown} is continuous and has a non-decreasing right-derivative at every point, so it must be non-increasing as function of $\sigma$ for every $\sigma>0$.
	 
	Finally, we consider the case $\sigma = 0$, with $F_0 = F$. By Lemma~\ref{lem:Hpintdown} and \eqref{eq:log+ineq} we immediately see that $\log^+ |F_\sigma| \to \log^+ |F|$ in $L^1(\mathbb{T}^\infty)$ as $\sigma \to 0^+$. By the the fact that \eqref{eq:logintdown} is non-increasing and finite for $\sigma > 0$, we find that
	\[\int_{\mathbb{T}^\infty} \log^{-}{|F(\chi)|}\,dm_\infty(\chi) \leq \varliminf_{\sigma \to 0^+} \int_{\mathbb{T}^\infty} \log^{-}{|F_\sigma(\chi)|}\,dm_\infty(\chi) < \infty,\]
where we have also applied Fatou's lemma for the first inequality. Therefore $\log |F| \in L^1(\mathbb{T}^\infty)$ and \eqref{eq:logintdown} is upper semi-continuous at $\sigma = 0$,
\[\int_{\mathbb{T}^\infty} \log {|F(\chi)|}\,dm_\infty(\chi) \geq \varlimsup_{\sigma \to 0^+} \int_{\mathbb{T}^\infty} \log {|F_\sigma(\chi)|}\,dm_\infty(\chi),\]
finishing the proof that \eqref{eq:logintdown} is a convex function of $\sigma \geq 0$.
\end{proof}

\begin{remark}
	The fact that $\log |F| \in L^1(\mathbb{T}^\infty)$ whenever $0 \not\equiv F \in H^p(\mathbb{D}^\infty)$ also appeared in \cite{AOS19}, where a subharmonicity argument was employed.
\end{remark}

We conclude this section with the following result, which is an $\mathscr{H}^p$-analogue of Hardy's convexity theorem (c.f.~\cite[Thm.~1.5]{Duren70}). We omit the details of the proof, but note that (i) can easily be deduced iteratively from the corresponding result for $H^p(\mathbb{D})$  and that (ii) has a proof similar to, but much easier than, that of Theorem~\ref{thm:logintdown}, using the second statement of Lemma~\ref{lem:convex}. 

\begin{theorem} \label{thm:hardy}
    Let $0<p<\infty$. Suppose that $F \in H^p(\mathbb{D}^\infty)$, and define
   	\[\mathscr{A}_p(\sigma,F)=\int_{\mathbb{T}^\infty} |F_\sigma(\chi)|^p\,dm_\infty(\chi)\]
	for $\sigma\geq0$. Then
	\begin{enumerate}
		\item[(i)] $\mathscr{A}_p(\sigma,F)$ is a non-increasing function of $\sigma$,
		\item[(ii)] $\log{\mathscr{A}_p(\sigma,F)}$ is a convex function of $\sigma$.
	\end{enumerate}
\end{theorem}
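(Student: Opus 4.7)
The plan is to reduce both statements to their finite-dimensional counterparts via der $m$te Abschnitt and then pass to the limit. Identifying $A_m F$ with an element of $H^p(\mathbb{D}^m)$, we have $(A_m F)_\sigma(\chi) = A_m F(p_1^{-\sigma}\chi_1, \ldots, p_m^{-\sigma}\chi_m)$, and \cite[Thm.~2.1]{BBSS19} (combined with the commutation $A_m(F_\sigma) = (A_m F)_\sigma$) yields $\mathscr{A}_p(\sigma, A_m F) \to \mathscr{A}_p(\sigma, F)$ pointwise in $\sigma \geq 0$.

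For (i), consider
\[G(r_1, \ldots, r_m) = \int_{\mathbb{T}^m} |A_m F(r_1 \chi_1, \ldots, r_m \chi_m)|^p \, dm(\chi_1)\cdots dm(\chi_m), \qquad 0 < r_j < 1.\]
By Fubini and iterated application of the classical one-variable Hardy convexity theorem, $G$ is non-decreasing in each $r_j$ separately. Composing with the non-increasing functions $\sigma \mapsto p_j^{-\sigma}$ shows that $\mathscr{A}_p(\sigma, A_m F) = G(p_1^{-\sigma}, \ldots, p_m^{-\sigma})$ is non-increasing in $\sigma$, and this property survives the pointwise limit $m \to \infty$.

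For (ii), I would apply the second statement of Lemma~\ref{lem:convex} with a $C^2$ approximation of $\psi(x) = e^{px}$; note that $\psi(\log|g|) = |g|^p$ and $\log \psi(x) = px$ is (trivially) convex. Since $e^{px}$ violates the compact-support hypothesis on $(-\infty, 0]$, one introduces a family of cutoffs $\psi_\beta = e^{h_\beta}$ ($\beta < 0$), where $h_\beta$ is a smooth convex function approximating $p\max(x, \beta)$, equal to $px$ for $x \geq \beta$ and to the constant $p\beta$ for $x \leq \beta - \varepsilon$. Each $\psi_\beta$ is then automatically convex and log-convex (the latter because $\log \psi_\beta = h_\beta$ is convex), $\operatorname{supp}(\psi_\beta') \subseteq [\beta - \varepsilon, \infty)$ meets the hypothesis of Lemma~\ref{lem:convex}, and $\psi_\beta \searrow e^{px}$ pointwise as $\beta \to -\infty$. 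Lemma~\ref{lem:convex} then gives that
\[\Psi_{m,\beta}(\sigma) = \int_{\mathbb{T}^\infty} \psi_\beta(\log|A_m F_\sigma(\chi)|) \, dm_\infty(\chi)\]
is log-convex in $\sigma > 0$; dominated convergence yields $\Psi_{m,\beta}(\sigma) \to \mathscr{A}_p(\sigma, A_m F)$ as $\beta \to -\infty$, and log-convexity is preserved under pointwise limits of positive functions. Taking $m \to \infty$ extends log-convexity to $\mathscr{A}_p(\cdot, F)$ on $(0, \infty)$, and Lemma~\ref{lem:Hpintdown} provides continuity at $\sigma = 0$ so that convexity extends to $[0, \infty)$.

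The main technical point is to construct the cutoff family $\psi_\beta$ in a way that simultaneously preserves convexity and log-convexity; the choice $\psi_\beta = e^{h_\beta}$ reduces everything to the convexity of $h_\beta$, which is straightforward to arrange via direct mollification of $p\max(x, \beta)$, since convexity of $\psi_\beta$ then follows from $\psi_\beta'' = (h_\beta'' + (h_\beta')^2)e^{h_\beta} \geq 0$. Beyond this, the proof is a sequence of standard approximation and limit-passing steps, consistent with the paper's remark that the argument is much simpler than that of Theorem~\ref{thm:logintdown}.
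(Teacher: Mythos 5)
Your proof is correct and follows exactly the route the paper indicates for this (omitted) proof: part (i) by iterating the one-variable Hardy convexity theorem through der $m$te Abschnitt, and part (ii) by feeding suitably cut-off, simultaneously convex and log-convex approximations of $e^{px}$ into the second statement of Lemma~\ref{lem:convex} and passing to the limits in $\beta$ and $m$, with Lemma~\ref{lem:Hpintdown} handling $\sigma=0$. The details you supply (the commutation $A_m(F_\sigma)=(A_mF)_\sigma$, the construction $\psi_\beta=e^{h_\beta}$, and the limit-passing) are all sound.
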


It is interesting to note that the corresponding result for $H^p(\mathbb{D})$ is originally from Hardy's paper \cite{Hardy15}, which is often considered the historical starting point of the theory of $H^p(\mathbb{D})$.

\section{$\mathscr{H}^p$-theory II: Jessen's function and Frostman's theorem} \label{sec:jessen}
One goal of this paper is to illustrate that the \emph{Jessen} function
\begin{equation} \label{eq:Jessen}
	\mathscr{J}_f(\sigma) = \lim_{T\to\infty} \frac{1}{2T}\int_{-T}^T \log{|f(\sigma+it)|}\,dt.
\end{equation}
plays the same role in $\mathscr{H}^p$-theory as the corresponding Jensen function plays in $H^p(\mathbb{D})$-theory (see \cite[Ch.~2]{Duren70} and \cite[Sec.~7.3]{Shapiro93}). Here $f$ is a Dirichlet series with $\sigma_{\operatorname{u}}(f)\leq0$. The existence of $\mathscr{J}_f(\sigma)$ for $\sigma > 0$ is due to Jessen \cite[Satz~A]{Jessen33} (see also \cite[Thm.~5]{JT45}), and is proven by using almost periodicity to control the zeros of $f$ in $\mathbb{C}_{\sigma/2}$. The following result is essentially a corollary of the work of Jessen.
\begin{lemma} \label{lem:Jessen}
	Suppose that $\sigma_{\operatorname{u}}(f)\leq0$. For every $\sigma>0$ the Jessen function \eqref{eq:Jessen} exists and is a convex non-increasing function of $\sigma$. Moreover,
	\begin{equation} \label{eq:Jessenchi}
		\mathscr{J}_f(\sigma) = \int_{\mathbb{T}^\infty} \log{|f_\chi(\sigma)|}\,dm_\infty(\chi).
	\end{equation}
\end{lemma}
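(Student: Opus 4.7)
My plan is to identify $\mathscr{J}_f(\sigma)$ with the integral on the right of \eqref{eq:Jessenchi}; convexity and the non-increasing property then transfer directly from Theorem~\ref{thm:logintdown}. The hypothesis $\sigma_{\operatorname{u}}(f)\leq 0$ combined with Bohr's theorem (Lemma~\ref{lem:bohr}) gives that $f$ is bounded on each $\overline{\mathbb{C}_\varepsilon}$, so $F = \mathscr{B}f\in H^p(\mathbb{D}^\infty)$ for every $p>0$ and Theorem~\ref{thm:logintdown} applies. Write $J(\sigma) = \int_{\mathbb{T}^\infty}\log|f_\chi(\sigma)|\,dm_\infty(\chi)$ for the right-hand side of \eqref{eq:Jessenchi}; this is finite for $\sigma\geq 0$ and convex non-increasing by Theorem~\ref{thm:logintdown}. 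I would invoke Jessen~\cite{Jessen33} for existence of $\mathscr{J}_f(\sigma)$ when $\sigma>0$, so only the identification $\mathscr{J}_f = J$ remains.

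The main tool for the identification is ergodic theory on $\mathbb{T}^\infty$. Consider the Kronecker flow $\Phi_t\chi = (p_j^{-it}\chi_j)_{j\geq 1}$, which preserves $m_\infty$ and is ergodic (in fact uniquely ergodic) by the $\mathbb{Q}$-linear independence of $\{\log p_j\}_{j\geq 1}$. Since $f_{\Phi_t\chi}(\sigma) = f_\chi(\sigma + it)$, Birkhoff's pointwise ergodic theorem applied to the $L^1(\mathbb{T}^\infty)$-function $g(\chi) = \log|f_\chi(\sigma)|$---integrability of which is supplied by Theorem~\ref{thm:logintdown}---yields
\[\lim_{T\to\infty} \frac{1}{2T}\int_{-T}^T \log|f_\chi(\sigma+it)|\,dt = J(\sigma)\]
for $m_\infty$-a.e.\ $\chi$.

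The remaining task is to pass from this almost-everywhere statement to the specific point $\chi = \mathbf{1}$. One direction, $\mathscr{J}_f(\sigma)\leq J(\sigma)$, follows from unique ergodicity applied to the continuous bounded truncations $\chi \mapsto \max(\log|f_\chi(\sigma)|, \beta)$---continuity of $\chi \mapsto f_\chi(\sigma)$ on $\mathbb{T}^\infty$ is secured by applying Bohr's theorem to the Dirichlet tails $\sum_{n>N}a_n n^{-s}$, whose $H^\infty(\mathbb{C}_\varepsilon)$-norm tends to $0$---followed by $\beta\to -\infty$ via monotone convergence, using $\log^-|f_\chi(\sigma)|\in L^1(\mathbb{T}^\infty)$. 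For the reverse, I would rely on the uniformity inherent in Jessen's existence argument: his almost-periodic analysis of the zero distribution shows that $\mathscr{J}_{f_\chi}(\sigma)$ is in fact independent of $\chi \in \mathbb{T}^\infty$, since it equals $\mathscr{J}_f(\sigma)$ on the dense subset $\{(p_j^{-i\tau})_{j\geq 1}:\tau\in\mathbb{R}\}$ by time-translation invariance. Combined with the Birkhoff identity this forces $\mathscr{J}_f(\sigma) = J(\sigma)$.

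The main obstacle is this last propagation step. Because $\chi \mapsto \log|f_\chi(\sigma)|$ is only upper semicontinuous---taking the value $-\infty$ at zeros of the polydisc boundary values---approximation from above by continuous bounded functions inherently yields only the one-sided bound $\mathscr{J}_f \leq J$. Closing the gap at $\chi = \mathbf{1}$ genuinely requires exploiting the uniformity of Jessen's almost-periodic analysis (equivalently, the constancy or continuity of $\chi\mapsto\mathscr{J}_{f_\chi}(\sigma)$) rather than pure ergodic-theoretic considerations on the polydisc.
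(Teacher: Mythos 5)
Your proposal follows essentially the same route as the paper: existence of $\mathscr{J}_f(\sigma)$ and the constancy of $\chi\mapsto\mathscr{J}_{f_\chi}(\sigma)$ come from Jessen's Satz~A together with the fact that vertical limit functions are uniform limits of vertical translations, and the identification with the integral then follows from Birkhoff's ergodic theorem for the Kronecker flow, with integrability and convexity supplied by Theorem~\ref{thm:logintdown}. (Your unique-ergodicity/truncation detour is superfluous once the constancy of $\mathscr{J}_{f_\chi}$ is established.) One correction: $\sigma_{\operatorname{u}}(f)\leq 0$ does \emph{not} imply $\mathscr{B}f\in H^p(\mathbb{D}^\infty)$ --- boundedness on each $\overline{\mathbb{C}_\varepsilon}$ gives no uniform control as $\varepsilon\to 0^+$, and already $f(s)=\sum_{n\geq 1}2^{-ns}$, with Bohr lift $\chi_1/(1-\chi_1)$, fails to lie in $H^1(\mathbb{D}^\infty)$. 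The fix is to apply Theorem~\ref{thm:logintdown} to $f(\cdot+\sigma_0)$, whose Bohr lift is in $H^\infty(\mathbb{D}^\infty)\subset H^p(\mathbb{D}^\infty)$; since the lemma only concerns $\sigma>0$ and $\sigma_0>0$ is arbitrary, this suffices, but your claims that $F\in H^p(\mathbb{D}^\infty)$ and that $J(\sigma)$ is finite at $\sigma=0$ should be dropped.
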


\begin{proof}
	Jessen \cite[Satz~A]{Jessen33} proved that if $\sigma_{\operatorname{u}}(f)\leq0$, then the limit \eqref{eq:Jessen} exists for every $\sigma>0$ and that if $(f_j)_{j\geq1}$ converges uniformly to $f$, then $\mathscr{J}_{f_j}(\sigma)\to \mathscr{J}_f(\sigma)$ in the half-plane of uniform convergence. Combining this with \eqref{eq:chitauk} and the fact that \eqref{eq:Jessen} is invariant under vertical translations, i.e. that the Jessen functions of $f$ and $T_\tau f$ are identical, we conclude that $\mathscr{J}_f(\sigma)=\mathscr{J}_{f_\chi}(\sigma)$ for every $\chi \in \mathbb{T}^\infty$.
	
	If $\sigma_{\operatorname{u}}(f)\leq0$ and $\sigma>0$, then $f(\cdot+\sigma)\in\mathscr{H}^p$ for every $0<p<\infty$. We therefore know that $\log|f_\chi(\sigma)|$ belongs to $L^1(\mathbb{T}^\infty)$, by Theorem~\ref{thm:logintdown}. This allows us to apply Birkhoff's ergodic theorem (see e.g. \cite[Thm.~2.1.12]{QQ13}) for the Kronecker flow
	\[\mathscr{T}_t = (2^{-it},3^{-it},\ldots,p_j^{-it},\ldots)\]
	on $\mathbb{T}^\infty$, to conclude that there is at least one $\chi' \in\mathbb{T}^{\infty}$ such that
	\[\int_{\mathbb{T}^\infty} \log|f_\chi(\sigma)|\,dm_\infty(\chi) =\lim_{T\to\infty} \frac{1}{2T}\int_{-T}^T \log|f_{\chi'}(\sigma + it)|\,dt = \mathscr{J}_{f_{\chi'}}(\sigma) = \mathscr{J}_f(\sigma).\]
	Hence we have established that the integrals \eqref{eq:Jessen} and \eqref{eq:Jessenchi} coincide when $\sigma_{\operatorname{u}}(f)\leq0$ and $\sigma>0$. The fact that the resulting function is convex and non-increasing follows either by Jessen's work \cite{Jessen33} or by our Theorem~\ref{thm:logintdown}.
\end{proof}

Note that for general $f \in \mathscr{H}^p$, the Jessen function \eqref{eq:Jessen} does not necessarily exist for $\sigma < 1/2$. In Theorem~\ref{thm:logintdown} we have therefore extended Jessen's work to $\mathscr{H}^p$-functions, by defining the Jessen function by \eqref{eq:Jessenchi} and removing the assumption that $\sigma_{\operatorname{u}}(f)\leq0$. This should be compared with the two versions of Carlson's formula \eqref{eq:carlsonchi} and \eqref{eq:carlsonp}.  Jessen proved the convexity of $\mathscr{J}_f(\sigma)$ through a different argument which relies on the Cauchy--Riemann equations and the argument principle to connect $\mathscr{J}_f'(\sigma)$ with an unweighted mean counting function; see Section~\ref{sec:JT}.

When we use the Jessen function to study the mean counting function in Section~\ref{sec:mean}, it is crucial that the limit of $\mathscr{J}_f(\sigma)$ exists when $\sigma\to0^+$. For this purpose, we introduce the Nevanlinna class $\mathscr{N}_{\operatorname{u}}$ of Dirichlet series $f$ with $\sigma_{\operatorname{u}}(f) \leq 0$ such that
\begin{equation} \label{eq:Nudef}
	\varlimsup_{\sigma \to 0^+}  \lim_{T \to \infty} \frac{1}{2T} \int_{-T}^T \log^+ |f(\sigma + it)| \, dt < \infty.
\end{equation}
Note that if $\sigma_{\operatorname{u}}(f) \leq 0$ and $f \in \mathscr{H}^p$, then $f \in \mathscr{N}_{\operatorname{u}}$, as follows from the inequality $\log^+ |f| \leq (1/p)|f|^p$ from \eqref{eq:log+ineq} and Carlson's formula \eqref{eq:carlsonp}.

\begin{remark}
	Using der $m$te Abschnitt as above, it is possible to define a more general Nevanlinna--type class of Dirichlet series, better suited to $\mathscr{H}^p$-theory. However, the class $\mathscr{N}_{\operatorname{u}}$ is natural for the study of the mean counting function. In either case, the subclass consisting of periodic Dirichlet series $\psi(2^{-s})$ is, under the identification $z=2^{-s}$, the usual Nevanlinna class $N$ of functions holomorphic in the unit disc $\mathbb{D}$.
\end{remark}

The mean integral in \eqref{eq:Nudef} certainly exists for every $\sigma > 0$, since $\log^+$ is continuous on $[0,\infty)$ so that, $t \mapsto \log^+ |f(\sigma + it)|$ is almost periodic. Here is a version of Lemma~\ref{lem:Jessen} for these mean integrals, which is much easier to prove. We will not actually need the result, but include it here for the sake of completeness.

\begin{lemma}
	Suppose that $\sigma_{\operatorname{u}}(f)\leq 0$. For every $\sigma>0$,
	\begin{equation} \label{eq:logplusint}
		\lim_{T\to\infty} \frac{1}{2T}\int_{-T}^T \log^+|f(\sigma+it)|\,dt = \int_{\mathbb{T}^\infty} \log^+|f_\chi(\sigma)|\,dm_\infty(\chi)
	\end{equation}
	and \eqref{eq:logplusint} defines a convex non-increasing function of $\sigma$.
\end{lemma}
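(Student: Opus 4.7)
The plan is to mirror the proof of Lemma~\ref{lem:Jessen}, exploiting the simplification that $\log^+|{\cdot}|$ is continuous everywhere (unlike $\log|{\cdot}|$, which has logarithmic singularities at zeros of $f$). Fix $\sigma > 0$. Since $\sigma_{\operatorname{u}}(f) \leq 0$, Bohr's theorem (Lemma~\ref{lem:bohr}) yields that $f$ is bounded on $\overline{\mathbb{C}_\sigma}$ and $t \mapsto f(\sigma+it)$ is almost periodic. Composing with the continuous function $z \mapsto \log^+|z|$ preserves boundedness and almost periodicity (via Bochner's compactness characterization), so the mean on the left of \eqref{eq:logplusint} exists.

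For the identification with the polydisc integral, the first step is to upgrade $F_\sigma$ to a continuous function on $\mathbb{T}^\infty$. The uniform convergence of the Dirichlet series for $f$ on $\overline{\mathbb{C}_\sigma}$ transfers, via density of $\{(n^{-it})_n : t \in \mathbb{R}\}$ in $\mathbb{T}^\infty$ (Kronecker's theorem), to uniform convergence in $\chi \in \mathbb{T}^\infty$ of the Dirichlet polynomial partial sums $S_N F_\sigma(\chi) = \sum_{n \leq N} a_n \chi(n) n^{-\sigma}$. Hence $\log^+|F_\sigma|$ is a continuous bounded function on $\mathbb{T}^\infty$. Using the Lipschitz bound $|\log^+ x - \log^+ y| \leq |x - y|$ (which is \eqref{eq:log+ineq} with $p = 1$), both sides of \eqref{eq:logplusint} depend continuously on $f$ in the sup norm, so it suffices to prove \eqref{eq:logplusint} when $f$ is a Dirichlet polynomial. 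In that case only finitely many primes $p_1, \ldots, p_m$ appear, and the identity reduces to Weyl's equidistribution theorem for the linear flow $t \mapsto (p_j^{-it})_{j \leq m}$ on $\mathbb{T}^m$, the frequencies $\log p_1, \ldots, \log p_m$ being rationally independent. The main subtlety is that we need \eqref{eq:logplusint} for the specific character $\chi = 1$, not merely for almost every $\chi$: Birkhoff's ergodic theorem alone only yields the weaker statement, whereas the unique ergodicity of the Kronecker flow (equivalent to Weyl equidistribution) secures the pointwise conclusion once the integrand is known to be continuous.

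For the convexity statement, I would apply Lemma~\ref{lem:convex} to a sequence of $C^2$ convex functions $\psi_j$ that satisfy its support hypothesis and converge pointwise to $x \mapsto x^+$ with $\psi_j \leq x^+$ (for instance, obtained by a standard smoothing of $x^+$ in a shrinking neighborhood of the origin). Lemma~\ref{lem:convex} yields convexity in $\sigma$ of $\int_{\mathbb{T}^\infty} \psi_j(\log|A_m F_\sigma|)\,dm_\infty$ for each $m$ and $j$. Since $A_m F_\sigma \in L^\infty(\mathbb{T}^\infty)$ uniformly in $m$, dominated convergence lets me pass $j \to \infty$ to obtain convexity of $\int \log^+|A_m F_\sigma|\,dm_\infty$, and the Lipschitz bound for $\log^+$ combined with the convergence $A_m F_\sigma \to F_\sigma$ in $H^p(\mathbb{D}^\infty)$ (e.g.\ $p = 1$) lets me pass $m \to \infty$ preserving convexity. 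Finally, the non-increasing property is automatic: as $\sigma \to \infty$, $F_\sigma \to a_1$ uniformly on $\mathbb{T}^\infty$ (since $\sigma_{\operatorname{u}}(f) \leq 0$), so $\int \log^+|F_\sigma|\,dm_\infty \to \log^+|a_1|$ as $\sigma \to \infty$, and any convex function on $(0,\infty)$ with a finite limit at $+\infty$ must be non-increasing, since its non-decreasing right-derivative cannot be positive at any point.
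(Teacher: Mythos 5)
Your proof is correct, and for most steps it coincides with the paper's: the existence of the left-hand mean via almost periodicity of $t\mapsto\log^+|f(\sigma+it)|$, and the convexity via Lemma~\ref{lem:convex} applied to smooth convex approximations of $x\mapsto\max(x,0)$ --- this is exactly the paper's ``proof of Theorem~\ref{thm:logintdown} with $\beta=0$''. For the identity \eqref{eq:logplusint} the paper offers two routes, Birkhoff's ergodic theorem as in Lemma~\ref{lem:Jessen} or a Weierstrass-approximation argument, and you have carried out the second in the equivalent form of Weyl equidistribution for the finite-dimensional Kronecker flow after truncating to Dirichlet polynomials; your observation that Birkhoff alone only gives the identity for almost every $\chi$ is fair, though the paper's Birkhoff route circumvents this by first noting that the mean is the same for every vertical limit $f_\chi$ (by translation invariance of the mean and uniform approximation) and only then invoking Birkhoff to produce a single $\chi'$ realizing the integral. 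The one genuinely different step is the monotonicity: the paper iterates the corresponding one-variable result for the Nevanlinna class $N$ of the disc, whereas you deduce it from convexity together with the finite limit $\log^+|a_1|$ at $\sigma=+\infty$; your argument is shorter and in fact mirrors the paper's own treatment of the non-increasing property in Theorem~\ref{thm:logintdown}. One small point of hygiene: Lemma~\ref{lem:convex} assumes $F\in H^p(\mathbb{D}^\infty)$, which need not hold for a general $f$ with $\sigma_{\operatorname{u}}(f)\leq0$; since all evaluations occur at $\sigma>0$, one should apply it to $F_{\sigma_*}$ for small $\sigma_*>0$ (which lies in $H^\infty(\mathbb{D}^\infty)$) and let $\sigma_*\to0^+$, the same elision the paper makes.
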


\begin{proof}
	We have already seen that the mean integral on the left hand side of \eqref{eq:logplusint} exists, and since $f(\cdot+\sigma)\in\mathscr{H}^p$ for every $\sigma>0$, the existence of the right hand side follows from the proof of Theorem~\ref{thm:logintdown} with $\beta=0$. To see that they are equal, we can again use Birkhoff's ergodic theorem as in the proof of Lemma~\ref{lem:Jessen}, or employ an argument involving the Weierstrass approximation theorem (see~\cite[Sec.~3]{SS09}). That the right hand side of \eqref{eq:logplusint} is convex as a function of $\sigma$ also follows from the proof of Theorem~\ref{thm:logintdown}. The fact that it is non-increasing can be obtained by iterating the corresponding result for the usual Nevanlinna class $N$ (see e.g. \cite[Sec.~2.1]{Duren70}), in a similar fashion to the proof of Theorem~\ref{thm:hardy} (i).
\end{proof}

We need the following result in the proof of Theorem~\ref{thm:mean}.

\begin{lemma} \label{lem:Nu}
	If $f \in \mathscr{N}_{\operatorname{u}}$, then
	\begin{equation} \label{eq:Nudef2}
		\varlimsup_{\sigma \to 0^+}  \lim_{T \to \infty} \frac{1}{2T} \int_{-T}^T \big|\log |f(\sigma + it)|\big| \, dt < \infty.
	\end{equation}
\end{lemma}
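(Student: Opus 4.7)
The plan is to split $|\log|f||$ into its positive and negative parts, invoke $\mathscr{N}_{\operatorname{u}}$ for the positive part, and exploit the monotonicity of the Jessen function for the negative part. Concretely, I will use the elementary identities $\log^{-} x = \log^{+} x - \log x$ and hence
\[
\bigl|\log x\bigr| \;=\; \log^{+} x + \log^{-} x \;=\; 2\log^{+} x - \log x
\]
for every $x > 0$. Applying this pointwise to $x = |f(\sigma+it)|$ and integrating over $t \in (-T,T)$ gives
\[
\frac{1}{2T}\int_{-T}^{T}\bigl|\log|f(\sigma+it)|\bigr|\,dt
= 2\cdot\frac{1}{2T}\int_{-T}^{T}\log^{+}|f(\sigma+it)|\,dt \;-\; \frac{1}{2T}\int_{-T}^{T}\log|f(\sigma+it)|\,dt.
\]

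The first step is to let $T \to \infty$ on the right hand side. The mean of $\log^{+}|f(\sigma+it)|$ exists because $\log^{+}|f|$ is continuous and almost periodic on each line $\mre s = \sigma > 0$, as $f$ itself is almost periodic there by Bohr's theorem (Lemma~\ref{lem:bohr}) together with the assumption $\sigma_{\operatorname{u}}(f)\leq 0$. The mean of $\log|f(\sigma+it)|$ exists and equals $\mathscr{J}_f(\sigma)$ by Lemma~\ref{lem:Jessen}. Consequently, the mean of $|\log|f(\sigma+it)||$ exists too, and
\[
\lim_{T\to\infty}\frac{1}{2T}\int_{-T}^{T}\bigl|\log|f(\sigma+it)|\bigr|\,dt
= 2A(\sigma)-\mathscr{J}_f(\sigma),
\]
where $A(\sigma) := \lim_{T\to\infty}\tfrac{1}{2T}\int_{-T}^T \log^{+}|f(\sigma+it)|\,dt$.

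The second step is to take $\varlimsup_{\sigma\to 0^{+}}$ of both terms. By the very definition of $\mathscr{N}_{\operatorname{u}}$,
\[
\varlimsup_{\sigma\to 0^{+}} A(\sigma) < \infty.
\]
For the Jessen term, Lemma~\ref{lem:Jessen} asserts that $\mathscr{J}_f$ is convex and non-increasing on $(0,\infty)$. Fix any $\sigma_0 > 0$; then $\mathscr{J}_f(\sigma_0)$ is a finite real number, and for every $0<\sigma\leq\sigma_0$ we have $\mathscr{J}_f(\sigma)\geq \mathscr{J}_f(\sigma_0)$, i.e. $-\mathscr{J}_f(\sigma) \leq -\mathscr{J}_f(\sigma_0)$. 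Therefore
\[
\varlimsup_{\sigma\to 0^{+}} \bigl[-\mathscr{J}_f(\sigma)\bigr] \;\leq\; -\mathscr{J}_f(\sigma_0) < \infty.
\]
Combining the two bounds via subadditivity of $\varlimsup$ yields
\[
\varlimsup_{\sigma\to 0^{+}}\lim_{T\to\infty}\frac{1}{2T}\int_{-T}^{T}\bigl|\log|f(\sigma+it)|\bigr|\,dt
\;\leq\; 2\,\varlimsup_{\sigma\to 0^{+}} A(\sigma) - \mathscr{J}_f(\sigma_0) < \infty,
\]
which is exactly \eqref{eq:Nudef2}.

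I do not anticipate any substantive obstacle here: the hypothesis of $\mathscr{N}_{\operatorname{u}}$ handles $\log^{+}$, and the only fact we need about $\log^{-}$ is a uniform upper bound, which follows for free from the monotonicity of $\mathscr{J}_f$. The one point worth being slightly careful about is the existence of the mean of $|\log|f||$, which is why I split off $\log^{+}|f|$ (continuous and almost periodic) before invoking Jessen's theorem for the remaining $\log|f|$ part.
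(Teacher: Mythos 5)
Your proof is correct and follows essentially the same route as the paper's: both use the identity $|\log x| = 2\log^{+}x - \log x$, bound the $\log^{+}$ term by the defining condition of $\mathscr{N}_{\operatorname{u}}$, and bound $-\mathscr{J}_f(\sigma)$ from above via the monotonicity of the Jessen function from Lemma~\ref{lem:Jessen}. Your version simply spells out the details that the paper leaves as ``obvious.''
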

\begin{proof}
	Note that
	\[\lim_{T \to \infty} \frac{1}{2T} \int_{-T}^T \big|\log |f(\sigma + it)|\big| \, dt = \lim_{T \to \infty} \frac{1}{T} \int_{-T}^T \log^+ |f(\sigma + it)| \, dt - \mathscr{J}_f(\sigma).\]
	By Lemma~\ref{lem:Jessen}, $\mathscr{J}_f(\sigma)$ is non-increasing in $\sigma$. It is therefore obvious that \eqref{eq:Nudef} implies \eqref{eq:Nudef2}.
\end{proof}

Our next goal is to generalize Rudin's version of the Frostman theorem for $H^p(\mathbb{D}^m)$ to $H^p(\mathbb{D}^\infty)$, by following what is essentially Rudin's original argument, found in \cite{Rudin67} or \cite[Sec.~3.6]{Rudin69}. This result will be crucial in the proof of Theorem~\ref{thm:sharpiro}.

\begin{theorem} \label{thm:rudinfrostman}
	Suppose that $F\in H^p(\mathbb{D}^\infty)$ for some $0<p<\infty$. Then for quasi-every $\alpha\in\mathbb{C}$ we have
	\begin{equation} \label{eq:rudinfrostman}
		\lim_{\sigma\to0^+} \int_{\mathbb{T}^\infty} \log{|F_\sigma(\chi)-\alpha|}\,dm_\infty(\chi) = \int_{\mathbb{T}^\infty} \log{|F(\chi)-\alpha|}\,dm_\infty(\chi).
	\end{equation}
\end{theorem}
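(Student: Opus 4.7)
My plan is to follow Rudin's classical argument from \cite{Rudin67}, using Theorem~\ref{thm:logintdown} as the substitute for the subharmonicity arguments employed in the finite-polydisc case. First, I would apply Theorem~\ref{thm:logintdown} to $F - \alpha \in H^p(\mathbb{D}^\infty)$ (which is non-trivial for all but at most one $\alpha \in \mathbb{C}$), concluding that $\sigma \mapsto \int_{\mathbb{T}^\infty}\log|F_\sigma - \alpha|\,dm_\infty$ is convex, non-increasing, and upper semi-continuous at $\sigma = 0$. In particular, the limit as $\sigma \to 0^+$ exists and, for every such $\alpha$,
\[
g(\alpha) := \int_{\mathbb{T}^\infty}\log|F - \alpha|\,dm_\infty - \lim_{\sigma\to 0^+}\int_{\mathbb{T}^\infty}\log|F_\sigma - \alpha|\,dm_\infty \geq 0.
\]
The theorem then reduces to showing that $E := \{g > 0\}$ has logarithmic capacity zero.

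The heart of the argument is to fix an arbitrary positive Borel measure $\mu$ of compact support in $\mathbb{C}$ whose logarithmic potential $U^\mu(w) = \int \log\tfrac{1}{|w-\alpha|}\,d\mu(\alpha)$ is continuous on $\mathbb{C}$, and to show that $\int_{\mathbb{C}} g\,d\mu = 0$. Setting $V(w) = -U^\mu(w) = \int\log|w-\alpha|\,d\mu(\alpha)$, continuity of $U^\mu$ together with its logarithmic growth at infinity yields the bound $|V(w)| \leq C_\mu (1 + \log^+|w|)$. Combining this with the elementary estimate $\log^+ x \leq x^p/p$, Fubini's theorem yields
\[
\int_{\mathbb{C}} g \, d\mu = \int_{\mathbb{T}^\infty} V(F)\,dm_\infty - \lim_{\sigma \to 0^+}\int_{\mathbb{T}^\infty} V(F_\sigma)\,dm_\infty.
\]
Since $F_\sigma \to F$ almost everywhere on $\mathbb{T}^\infty$ by the existence of generalized boundary values, continuity of $V$ gives $V \circ F_\sigma \to V \circ F$ almost everywhere on $\mathbb{T}^\infty$. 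The domination $|V(F_\sigma)| \leq C_\mu(1 + |F_\sigma|^p/p)$ together with the uniform integrability of $\{|F_\sigma|^p\}_{\sigma > 0}$ guaranteed by the $L^p$-convergence in Lemma~\ref{lem:Hpintdown} would then allow Vitali's convergence theorem to deliver $\int g\,d\mu = 0$.

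Since $g \geq 0$, this forces $\mu\{g > 0\} = 0$ for every compactly supported positive measure $\mu$ on $\mathbb{C}$ with continuous potential. The final step invokes the standard potential-theoretic fact that any Borel set of positive logarithmic capacity carries such a measure (obtained, for instance, by suitably regularising an equilibrium measure on a compact subset), whence $E$ must have capacity zero. I anticipate the main technical obstacle to lie in the Fubini--Vitali passage to the limit, where the delicate point is to verify the required integrability and uniform integrability even though the Dirichlet series $f_\chi$ underlying $F$ need not be bounded on the imaginary axis; continuity of $V$ together with the $L^p$-convergence from Lemma~\ref{lem:Hpintdown} provides precisely what is needed. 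The closing capacity argument is entirely classical and identical to the one-variable case.
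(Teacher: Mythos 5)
Your proposal is correct and follows essentially the same route as the paper: both reduce the statement to showing that the non-negative defect vanishes quasi-everywhere, integrate against a compactly supported measure with continuous potential, pass to the limit via Fubini and a convergence theorem, and conclude from the fact that a set of positive capacity carries such a measure. The only (cosmetic) difference is that the paper first disposes of the $\log^+$ part for every $\alpha$ using \eqref{eq:log+ineq} and Lemma~\ref{lem:Hpintdown}, so that the potential-theoretic step involves only the bounded function $E(\lambda)=\int\log^-|\lambda-\alpha|\,d\mu(\alpha)$ and dominated convergence, whereas you keep the full logarithm and compensate for the logarithmic growth of $V$ with uniform integrability of $|F_\sigma|^p$ and Vitali's theorem.
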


\begin{proof}
	By the elementary inequality \eqref{eq:log+ineq} and the fact that $F_\sigma\to F$ in $H^p(\mathbb{D}^\infty)$ from Lemma~\ref{lem:Hpintdown}, we find that 
	\begin{equation} \label{eq:smirnovinfrostpf}	
		\lim_{\sigma\to0^+} \int_{\mathbb{T}^\infty} \log^+{|F_\sigma(\chi)-\alpha|}\,dm_\infty(\chi) = \int_{\mathbb{T}^\infty} \log^+{|F(\chi)-\alpha|}\,dm_\infty(\chi)
	\end{equation}
	for every $\alpha\in\mathbb{C}$. For $\sigma > 0$, let
	\[B_\sigma(\alpha) = \int_{\mathbb{T}^\infty} \log^{-} |F_\sigma(\chi) - \alpha| \, dm_{\infty}(\chi).\]
	By Theorem~\ref{thm:logintdown} and \eqref{eq:smirnovinfrostpf}, we know that the limit $B(\alpha) = \lim_{\sigma \to 0^+} B_\sigma(\alpha)$ exists, and it remains to show that
	\begin{equation} \label{eq:frostman}
		B(\alpha) = \int_{\mathbb{T}^\infty} \log^{-} |F(\chi) - \alpha| \, dm_{\infty}(\chi)
	\end{equation}
	for quasi-every $\alpha\in\mathbb{C}$.  Suppose to the contrary that there is a compact set $K \subset \mathbb{C}$ of positive logarithmic capacity such that \eqref{eq:frostman} is false for every $\alpha \in K$. By the continuity principle of potential theory \cite[Cor.~2.6.6]{AH96}, there is a positive non-zero measure $\mu$ supported in $K$ such that
	\[E(\lambda) =  \int_K \log^{-} | \lambda - \alpha| \, d\mu(\alpha), \qquad \lambda \in \mathbb{C}\]
	defines a continuous and bounded function. By Fatou's lemma, Fubini's theorem, the dominated convergence theorem (exploiting the continuity and boundedness of $E$), and Fubini's theorem again, we have that
	\begin{align*}
		\int_K B(\alpha) \, d\mu(\alpha) &\leq \varliminf_{\sigma \to 0^+} \int_K B_\sigma(\alpha) \, d\mu(\alpha) \\
		&= \varliminf_{\sigma \to 0^+} \int_{\mathbb{T}^\infty}  E(F_\sigma(\chi)) \, dm_\infty(\chi) \\
		&= \int_{\mathbb{T}^\infty}  E(F(\chi)) \, dm_\infty(\chi) \\ 
		&= \int_K \int_{\mathbb{T}^\infty} \log^{-}|F(\chi) - \alpha| \, dm_\infty(\chi) \, d\mu(\alpha) 
	\end{align*}
	Since $B(\alpha) \geq \int_{\mathbb{T}^\infty} \log^{-}|F(\chi) - \alpha| \, dm_\infty(\chi)$ by Fatou's lemma, we conclude that 
	\[B(\alpha) = \int_{\mathbb{T}^\infty} \log^{-}|F(\chi) - \alpha| \, dm_\infty(\chi)\]
	for $\mu$-almost every $\alpha$. This is a contradiction.
\end{proof}

Suppose that $\sigma_{\operatorname{u}}(f)\leq0$ and that $f$ and $F$ are related as in \eqref{eq:Ffsigma}. Then, by Lemma~\ref{lem:Jessen}, the left hand side of \eqref{eq:rudinfrostman} can be understood in terms of the Jessen function of $f-\alpha$ and the right hand side is an integral involving the generalized boundary function $f^\ast-\alpha$. We can use this to obtain a version of the classical Frostman theorem for Dirichlet series.

Let $\mathscr{H}^\infty$ denote the Banach space of bounded analytic functions in $\mathbb{C}_0$ which may be represented by a Dirichlet series in some half-plane. We set
\[\|f\|_{\mathscr{H}^\infty} = \sup_{\mre{s}>0} |f(s)|.\]
By Lemma~\ref{lem:bohr}, $\sigma_{\operatorname{u}}(f)\leq0$ for every $f \in \mathscr{H}^\infty$. Under the Bohr correspondence we have that $\|f\|_{\mathscr{H}^\infty}=\|f^\ast\|_{L^\infty(\mathbb{T}^\infty)}$ and, consequently, we identify $\mathscr{H}^\infty$ with the Hardy space $H^\infty(\mathbb{D}^\infty)$. The latter space consists, via Poisson extension, of convergent power series in $\mathbb{D}^\infty \cap c_0$ (see~\cite{CG86,HLS97}). From this it is also clear that $\mathscr{H}^\infty\subset \mathscr{H}^p$ for every $0<p<\infty$.

\begin{remark}
	Is is easy to see that the statements of Theorem~\ref{thm:hardy} also hold for $\mathscr{H}^\infty$. In this case (i) follows from the maximum principle and (ii) from the Hadamard three-lines theorem.
\end{remark}

For $\alpha \in \mathbb{D}$, let $\phi_\alpha$ be the automorphism of $\mathbb{D}$ given by
\[\phi_\alpha(z) = \frac{\alpha-z}{1-\overline{\alpha}z}.\]
If $f$ is an analytic function with range contained in $\mathbb{D}$, then the \emph{Frostman shifts} of $f$ are the functions $\phi_\alpha \circ f$ for $\alpha \in \mathbb{D}$.

We say that $f \in \mathscr{H}^\infty$ is \emph{inner} if $|f^\ast(\chi)|=1$ for almost every $\chi \in \mathbb{T}^\infty$. The following result illustrates how Theorem~\ref{thm:rudinfrostman} really is a generalization of the classical Frostman theorem.

\begin{theorem} \label{thm:sigmainner}
	Suppose that $f\in\mathscr{H}^\infty$ is inner. Then the Dirichlet series $\phi_\alpha \circ f$ is inner for every $\alpha\in\mathbb{D}$. Moreover, 
	\[\lim_{\sigma\to0^+} \lim_{T\to\infty} \frac{1}{2T}\int_{-T}^T \log|\phi_\alpha \circ f(\sigma+it)|\,dt = 0\]
	for quasi-every $\alpha \in \mathbb{D}$.
\end{theorem}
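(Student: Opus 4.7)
The plan is to first dispatch the inner-function claim, then reduce the main limit assertion to Theorem~\ref{thm:rudinfrostman} via the identity $\log|\phi_\alpha \circ f| = \log|f-\alpha| - \log|1-\overline{\alpha}f|$, using Jessen's formula (Lemma~\ref{lem:Jessen}) to convert the mean integrals to integrals over $\mathbb{T}^\infty$.

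For the first claim, I first check that $\phi_\alpha \circ f \in \mathscr{H}^\infty$. If $f$ is constant, then $f \equiv c$ with $|c|=1$ and $\phi_\alpha \circ f$ is a constant of modulus one, so the whole theorem is trivial. Otherwise I argue that $|f(+\infty)|<1$: indeed, since $|f^\ast(\chi)|=1$ almost everywhere on $\mathbb{T}^\infty$, $|f(+\infty)|=1$ forces $f^\ast$ to be the essentially constant unimodular function obtained from the equality case in the triangle inequality for $\int f^\ast\,dm_\infty$, whence $f$ is constant. With $|f(+\infty)|<1$, the expansion of $\phi_\alpha(z)$ as a power series around $z=f(+\infty)$ yields that $\phi_\alpha\circ f$ admits a convergent Dirichlet series in some half-plane, and boundedness by one in $\mathbb{C}_0$ follows from $\phi_\alpha(\mathbb{D})\subseteq\mathbb{D}$. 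Next, the vertical limit of a composition is the composition of vertical limits, $(\phi_\alpha\circ f)_\chi = \phi_\alpha\circ f_\chi$, because uniform convergence of translates $T_{\tau_j}f$ on each $\overline{\mathbb{C}_\theta}$ passes through the continuous function $\phi_\alpha$. Taking $\sigma\to 0^+$ and using that $f^\ast(\chi)\in\mathbb{T}$ is never equal to the pole $1/\overline{\alpha}\in\mathbb{C}\setminus\overline{\mathbb{D}}$ gives $(\phi_\alpha\circ f)^\ast = \phi_\alpha\circ f^\ast$ almost everywhere, so $|(\phi_\alpha\circ f)^\ast|=1$ a.e.\ since $\phi_\alpha$ maps $\mathbb{T}$ to $\mathbb{T}$. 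Hence $\phi_\alpha \circ f$ is inner.

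For the limit statement, I use Lemma~\ref{lem:Jessen} applied to $\phi_\alpha\circ f$ (which has $\sigma_{\operatorname{u}}\leq 0$) to rewrite
\[\lim_{T\to\infty}\frac{1}{2T}\int_{-T}^T \log|\phi_\alpha\circ f(\sigma+it)|\,dt = \int_{\mathbb{T}^\infty}\log|\phi_\alpha\circ f_\chi(\sigma)|\,dm_\infty(\chi),\]
and then split the right-hand side as
\[\int_{\mathbb{T}^\infty}\log|f_\chi(\sigma)-\alpha|\,dm_\infty(\chi) \;-\; \int_{\mathbb{T}^\infty}\log|1-\overline{\alpha}f_\chi(\sigma)|\,dm_\infty(\chi).\]
For the second integrand the quantity $1-\overline{\alpha}f_\chi(\sigma)$ is uniformly bounded away from $0$ (and above) because $|f_\chi(\sigma)|<1$ and $|\alpha|<1$, so dominated convergence together with the pointwise a.e.\ convergence $f_\chi(\sigma)\to f^\ast(\chi)$ produces the limit $\int_{\mathbb{T}^\infty}\log|1-\overline{\alpha}f^\ast(\chi)|\,dm_\infty(\chi)$. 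For the first integrand I invoke Theorem~\ref{thm:rudinfrostman}, which for quasi-every $\alpha \in \mathbb{C}$ (and hence quasi-every $\alpha\in\mathbb{D}$) gives the limit $\int_{\mathbb{T}^\infty}\log|f^\ast(\chi)-\alpha|\,dm_\infty(\chi)$.

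Combining the two limits, the $\sigma\to 0^+$ limit equals $\int_{\mathbb{T}^\infty}\log|\phi_\alpha\circ f^\ast(\chi)|\,dm_\infty(\chi)$, which vanishes because $|\phi_\alpha\circ f^\ast(\chi)|=1$ a.e.\ by the first part. The only real substance is Theorem~\ref{thm:rudinfrostman}, already in hand; the genuine obstacle is the first integral, where the integrand has no uniform bound and Frostman's theorem is essential, while every other step is either an elementary Möbius identity, a dominated-convergence argument, or an application of Lemma~\ref{lem:Jessen}.
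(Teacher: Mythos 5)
Your proposal is correct and follows essentially the same route as the paper: split $\log|\phi_\alpha\circ f|$ via the M\"obius identity, apply Lemma~\ref{lem:Jessen} to pass to integrals over $\mathbb{T}^\infty$, and invoke Theorem~\ref{thm:rudinfrostman} for the term $\log|f-\alpha|$, with innerness supplying the final cancellation. The only (harmless) variation is in the term $\log|1-\overline{\alpha}f|$, which you handle by dominated convergence using that it is uniformly bounded away from $0$, whereas the paper evaluates both sides exactly as $\log|1-\overline{\alpha}f(+\infty)|$ via almost periodicity and the Poisson extension.
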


\begin{proof}
	The fact that $\phi_\alpha \circ f \in \mathscr{H}^\infty$ follows at once from Lemma~\ref{lem:bohr}. Since $\phi_\alpha(\mathbb{T})=\mathbb{T}$ we also see that $|f^\ast(\chi)|=1$ if and only if $|(\phi_\alpha \circ f)^\ast(\chi)|=1$. Since the Dirichlet series $\phi_\alpha \circ f$, $f-\alpha$ and $1-\overline{\alpha} f$ are in $\mathscr{H}^\infty$, we can apply Lemma~\ref{lem:Jessen} to each of them. By Theorem~\ref{thm:rudinfrostman}, we therefore need to prove that
	\[\lim_{\sigma\to 0^+} \lim_{T\to\infty} \frac{1}{2T} \int_{-T}^T \log{|1-\overline{\alpha}f(\sigma+it)|}\,dt = \int_{\mathbb{T}^\infty} \log{|f^\ast(\chi)-\alpha|}\,dm_\infty(\chi).\]
	Since $f$ is inner,
	\[\int_{\mathbb{T}^\infty} \log{|f^\ast(\chi)-\alpha|}\,dm_\infty(\chi) = \int_{\mathbb{T}^\infty} \log{|1-\overline{\alpha} f^\ast(\chi)|}\,dm_\infty(\chi).\]
	However, for every $\sigma>0$ we have
	\begin{align*}
		\lim_{T\to\infty} \frac{1}{2T} \int_{-T}^T \log{|1-\overline{\alpha}f(\sigma+it)|}\,dt &= \mre{\lim_{T\to\infty} \frac{1}{2T} \int_{-T}^T \log{\big(1-\overline{\alpha}f(\sigma+it)\big)}\,dt} \\
		&= \log|1 - \overline{\alpha} f(+\infty)|
	\end{align*}
	by analyticity and almost periodicity. Similarly, since $\log(1-\overline{\alpha} f^\ast)$ is in $H^\infty(\mathbb{D}^\infty)$ for $|\alpha|<1$, we have that
	\begin{align*}
		\int_{\mathbb{T}^\infty} \log{|1 - \overline{\alpha} f^\ast(\chi)|} \, dm_{\infty}(\chi) &= \mre{\int_{\mathbb{T}^\infty} \log{\big(1 - \overline{\alpha} f^\ast(\chi)\big)} \, dm_{\infty}(\chi)} \\
		&= \log|1 - \overline{\alpha} f^\ast(0)|,
	\end{align*}
	where $f^\ast(0)$ denotes the value of the Poisson extension of $f^\ast$ at $0$. But of course, $f^\ast(0) = f(+\infty)$, and so, actually,
	\[\lim_{T\to \infty} \frac{1}{2T} \int_{-T}^T \log{|1 - \overline{\alpha}f(\sigma + it)|} \, dt = \int_{\mathbb{T}^\infty} \log{|1 - \overline{\alpha} f^\ast(\chi)|} \, dm_{\infty}(\chi),\]
	for every $\sigma > 0$.
\end{proof}

\begin{remark}
	In the case that $f$ is periodic, for example when $f(s) = \psi(2^{-s})$ for an inner function $\psi \in H^\infty(\mathbb{D})$, this is the classical Frostman theorem. The fact that $\phi_\alpha \circ \psi$ is inner and satisfies
	\[\lim_{\sigma \to 0^+} \int_{0}^{2\pi} \log|\phi_\alpha \circ \psi(2^{-\sigma} e^{i\theta})| \, \frac{d\theta}{2\pi} = 0\]
	is exactly saying that $\phi_\alpha \circ \psi$ has no singular inner factor, and hence is a Blaschke product.
\end{remark}

In Section~\ref{sec:prelim}, we defined in \eqref{eq:Phiw} the auxiliary function
\[\Phi_w(s) = \frac{w-\varphi(s)}{\overline{w}+\varphi(s)-1}\]
for $\varphi \in \mathscr{G}_0$ and $w\in\mathbb{C}_{1/2}$. By the mapping properties of $\varphi$, we have that $\Phi_w \in \mathscr{H}^\infty$ and $\|\Phi_w\|_{\mathscr{H}^\infty}\leq 1$. It is also clear that $\Phi_w$ is inner (for any $w \in \mathbb{C}_{1/2}$) if and only if $\mre{\varphi^\ast(\chi)}=1/2$ for almost every $\chi \in \mathbb{T}^\infty$.

Moreover, a computation reveals that the Frostman shifts of $\Phi_w$ are given by
\[\phi_\alpha \circ \Phi_w = \lambda_\alpha \Phi_{w_\alpha},\qquad\text{where}\quad  \lambda_w = -\frac{1+\alpha}{1+\overline{\alpha}}\quad\text{and} \quad w_\alpha = \frac{w+\alpha(1-\overline{w})}{1+\alpha}.\]
Note that $|\lambda_\alpha|=1$ for every $\alpha \in \mathbb{D}$. Note also that if we fix $w \in \mathbb{C}_{1/2}$, then $\alpha \mapsto w_\alpha$ is a conformal map from $\mathbb{D}$ to $\mathbb{C}_{1/2}$ which extends to a conformal map of the Riemann sphere onto itself. Hence we obtain the following from Theorem~\ref{thm:sigmainner}.

\begin{corollary} \label{cor:Phiwinner}
	Suppose that $\varphi\in\mathscr{G}_0$ is such that $\mre{\varphi^\ast(\chi)}=1/2$ for almost every $\chi \in \mathbb{T}^\infty$. Then
	\[\lim_{\sigma\to0^+} \lim_{T\to\infty} \frac{1}{2T}\int_{-T}^T \log{|\Phi_w(\sigma+it)|}\,dt=0\]
	for quasi-every $w \in \mathbb{C}_{1/2}$.
\end{corollary}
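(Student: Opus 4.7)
The proof is essentially a direct application of Theorem~\ref{thm:sigmainner} to the inner function $\Phi_{w_0}$ for one fixed $w_0 \in \mathbb{C}_{1/2}$, combined with the explicit Frostman shift identity displayed immediately before the corollary. The hypothesis $\mre \varphi^\ast = 1/2$ a.e.\ on $\mathbb{T}^\infty$ is precisely what guarantees that $\Phi_w$ is inner for every $w \in \mathbb{C}_{1/2}$, as noted in the text; in particular $\Phi_{w_0} \in \mathscr{H}^\infty$ is inner, and Theorem~\ref{thm:sigmainner} applies.

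Fix therefore any $w_0 \in \mathbb{C}_{1/2}$ and apply Theorem~\ref{thm:sigmainner} to $f = \Phi_{w_0}$. This yields a set $E \subset \mathbb{D}$ of full logarithmic capacity such that for every $\alpha \in E$,
\[
\lim_{\sigma \to 0^+} \lim_{T\to \infty} \frac{1}{2T} \int_{-T}^T \log \bigl| (\phi_\alpha \circ \Phi_{w_0})(\sigma + it) \bigr| \, dt = 0.
\]
Now invoke the identity $\phi_\alpha \circ \Phi_{w_0} = \lambda_\alpha \Phi_{(w_0)_\alpha}$ with $|\lambda_\alpha| = 1$ recorded just before the corollary. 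Since $\log|\lambda_\alpha| = 0$, the above display reads
\[
\lim_{\sigma \to 0^+} \lim_{T \to \infty} \frac{1}{2T} \int_{-T}^T \log \bigl| \Phi_{(w_0)_\alpha}(\sigma + it) \bigr| \, dt = 0
\]
for all $\alpha \in E$. Substituting $w = (w_0)_\alpha$, the conclusion holds for every $w$ in the image $\{(w_0)_\alpha : \alpha \in E\}$.

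It remains to transfer the "quasi-every" statement from $\mathbb{D}$ to $\mathbb{C}_{1/2}$. The map $\alpha \mapsto (w_0)_\alpha$ is, as pointed out in the text, a Möbius transformation extending to a conformal self-map of the Riemann sphere, mapping $\mathbb{D}$ bijectively onto $\mathbb{C}_{1/2}$. Möbius transformations are locally bi-Lipschitz away from their pole, and polar sets (that is, sets of logarithmic capacity zero) are preserved under locally bi-Lipschitz maps. Hence the complement of $\{(w_0)_\alpha : \alpha \in E\}$ inside $\mathbb{C}_{1/2}$ is the image under this Möbius transformation of the polar set $\mathbb{D} \setminus E$, and is therefore itself polar. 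Consequently the displayed limit is zero for quasi-every $w \in \mathbb{C}_{1/2}$, as required.

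No step here is the genuine obstacle; the content of the result is already concentrated in Theorems~\ref{thm:rudinfrostman} and \ref{thm:sigmainner}. The only small care needed is the last paragraph — verifying that the quasi-everywhere conclusion transfers through a Möbius change of variables — which is standard from potential theory, since polar sets are a Möbius-invariant notion.
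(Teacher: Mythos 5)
Your proof is correct and follows exactly the route the paper intends: the paper derives the corollary from Theorem~\ref{thm:sigmainner} via the Frostman shift identity $\phi_\alpha \circ \Phi_{w_0} = \lambda_\alpha \Phi_{(w_0)_\alpha}$ and the conformal map $\alpha \mapsto w_\alpha$, which is precisely your argument. You have merely made explicit the final step (that polar sets transfer through the M\"obius change of variables), which the paper leaves implicit.
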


\section{A digression on the work of Jessen and Tornehave} \label{sec:JT}
Suppose that $f$ is a Dirichlet series with $\sigma_{\operatorname{u}}(f) \leq 0$. For each $\sigma > 0$, we can determine an argument $t \mapsto \arg^+ f(\sigma+it)$ which is continuous except at zeros of $f$; if $f(\sigma+it_0) = 0$ of multiplicity $j$, we define $\arg^+ f(\sigma+it)$ to have a jump of magnitude $j\pi$, corresponding to moving around the zero in a small arc to the right of it. Lagrange's mean motion problem asks if the right mean motion
\[c^+(\sigma) = \lim_{T\to\infty} \frac{\arg^+ f(\sigma + iT) - \arg^+ f(\sigma - iT) }{2T}\]
exists. This was answered in the affirmative in ground breaking work of Jessen and Tornehave \cite{JT45}, after partial results had been obtained by Bernstein, Bohl, Bohr, Hartman, Weyl,  and several others. Jessen and Tornehave solved Lagrange's problem for a wider class than that of ordinary Dirichlet series, but we limit our discussion to this setting. To simplify our discussion we will also assume that $f(+\infty) \neq 0$.

By the Cauchy--Riemann equations, there is an evident formal relationship between the Jessen function and the mean motion $c^+(\sigma)$, namely
\[c^+(\sigma) = \mathscr{J}'_f(\sigma + 0).\]
Note that the right derivative $\mathscr{J}'_f(\sigma + 0)$ exists, owing to the convexity of the Jessen function, which from our point of view is a consequence of Theorem~\ref{thm:logintdown} and Lemma~\ref{lem:Jessen}.

Similarly, recalling our assumption that $f(+\infty) \neq 0$ and applying the argument principle, the mean motion $c^+(\sigma)$ formally coincides, up to a factor of $-2\pi$, with the \emph{unweighted mean counting function}
\[\mathscr{Z}_f(\sigma) = \lim_{T \to \infty} \frac{1}{2T} \sum_{\substack{s \in f^{-1}(\{0\}) \\ \,\,|\mim{s}|<T \\ \sigma<\mre{s} < \infty}} 1,\]
that is,
\begin{equation} \label{eq:mmcounting1}
	\mathscr{Z}_f(\sigma) = - \frac{1}{2\pi} c^+(\sigma) = - \frac{1}{2\pi}\mathscr{J}'_f(\sigma + 0).
\end{equation}
Jessen and Tornehave \cite[Thm.~31]{JT45} proved that $\mathscr{Z}_f(\sigma)$ and $c^+(\sigma)$ both exist and that \eqref{eq:mmcounting1} is valid, for every $\sigma > 0$. Since the right-derivative of a convex function in an open interval must be right-continuous, we conclude the following.
\begin{lemma} \label{lem:Hfcont}
	If $\sigma_{\operatorname{u}}(f)\leq0$, then the unweighted mean counting function $\mathscr{Z}_f$ exists and is a right-continuous function of $\sigma > 0$.
\end{lemma}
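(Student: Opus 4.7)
The plan is to derive both assertions directly from the material already assembled in the excerpt, combining the Jessen--Tornehave theorem with the convexity of the Jessen function.

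For existence, we may assume $f \not\equiv 0$. In the case $f(+\infty) \neq 0$, the existence of $\mathscr{Z}_f(\sigma)$ for every $\sigma > 0$ and the identity
\[
	\mathscr{Z}_f(\sigma) = -\frac{1}{2\pi} \mathscr{J}'_f(\sigma + 0)
\]
are exactly \cite[Thm.~31]{JT45} as recorded in \eqref{eq:mmcounting1}. If instead $f(+\infty) = 0$, I would reduce to the previous case by letting $N$ be the smallest integer with $a_N \neq 0$ and considering $\tilde f(s) = N^s f(s)$. Although $\tilde f$ is not an ordinary Dirichlet series, it is a generalized Dirichlet series of the type treated in \cite{JT45}, it satisfies $\tilde f(+\infty) = a_N \neq 0$, and it has the same zeros as $f$. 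Since $\mathscr{J}_{\tilde f}(\sigma) = \mathscr{J}_f(\sigma) + \sigma \log N$, the two Jessen functions have equal right-derivatives up to an additive constant, and the Jessen--Tornehave identity for $\tilde f$ transfers verbatim to $f$, giving the existence of $\mathscr{Z}_f(\sigma)$ for every $\sigma > 0$.

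For right-continuity, I would invoke the classical fact that the right-derivative of a convex function on an open interval is a right-continuous function. By Lemma~\ref{lem:Jessen} the Jessen function $\mathscr{J}_f$ is convex on $(0, \infty)$, so $\sigma \mapsto \mathscr{J}'_f(\sigma + 0)$ is right-continuous there. Composing with the identity \eqref{eq:mmcounting1} (suitably adjusted as above if $f(+\infty)=0$) yields right-continuity of $\mathscr{Z}_f$ on $(0, \infty)$.

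The main obstacle, such as it is, is purely bookkeeping: handling the possibility that $f(+\infty) = 0$, which is swept under the rug in the excerpt's discussion of \cite{JT45}. Beyond this minor reduction, the lemma is a direct corollary of the Jessen--Tornehave theorem and the convexity established earlier in the paper, with no new computation required.
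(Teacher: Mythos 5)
Your argument is correct and is essentially the paper's own: existence and the identity $\mathscr{Z}_f(\sigma) = -\tfrac{1}{2\pi}\mathscr{J}'_f(\sigma+0)$ are quoted from \cite[Thm.~31]{JT45}, and right-continuity follows because the right-derivative of the convex Jessen function is right-continuous on an open interval. Your reduction of the case $f(+\infty)=0$ via $\tilde f(s)=N^s f(s)$ is a correct way to fill in a case the paper disposes of only by assumption (``we will also assume that $f(+\infty)\neq0$'').
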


In the process of solving Lagrange's mean motion problem, Jessen and Tornehave established a number of other properties of the Jessen function that we now list. None of these are strictly needed for the development of the main results of this paper, but are included for their intrinsic interest and for the fact that they are reflected in the mean counting function $\mathscr{M}_f(w, \sigma)$, see Corollary~\ref{cor:Mprop}.

We have already seen that $\mathscr{J}_f(\sigma) = \log |f(+\infty)|$ for all sufficiently large $\sigma$, in the proof of Lemma~\ref{lem:Nout} (with $a_1 = f(+\infty) \neq 0$). More generally, $\sigma \mapsto \mathscr{J}_f(\sigma)$ is linear in an interval $0 < \sigma_0 < \sigma < \sigma_1 \leq \infty$ if and only if $f$ is zero-free in the strip $\sigma_0 < \mre s < \sigma_1$. In each such linearity interval we have that $\mathscr{J}'_f(\sigma) = -\log n$, for some positive integer $n$. The number of linearity intervals in any compact subinterval of $(0, \infty)$ is always finite. For a periodic function $f(s) = \psi(2^{-s})$, it is obvious that every point $\sigma > 0$ is either a point of non-differentiability or belongs to a linearity interval of $\mathscr{J}_f$. This statement is not true for non-periodic functions, which generally exhibit complicated zero sets.

The number of points where $\mathscr{J}_f$ is not differentiable is also locally finite. By convexity, at any point $\sigma_0 > 0$ of non-differentiability, we have that
\begin{equation} \label{eq:mmcounting2}
	\lim_{\sigma \to \sigma_0^{\pm}} \mathscr{J}'_f(\sigma) = \mathscr{J}'_f(\sigma_0 \pm 0),
\end{equation}
where the right-hand side denotes the right/left derivative of the Jessen function at $\sigma_0$.  Furthermore, for any $\sigma > 0$,
\begin{equation}\label{eq:nondiff1}
	\begin{aligned} 
			\lim_{\varepsilon \to 0^+} (\mathscr{Z}_f(\sigma - \varepsilon) - \mathscr{Z}_f(\sigma + \varepsilon)) &= \frac{1}{2\pi}(\mathscr{J}'_f(\sigma + 0) - \mathscr{J}'_f(\sigma - 0)) \\
			&= \lim_{T \to \infty} \frac{1}{2T} \sum_{\substack{s \in f^{-1}(\{0\}) \\ |\mim{s}| < T \\ \mre{s} = \sigma}} 1. 
	\end{aligned}
\end{equation}
For the second equality, we refer to the proofs of Theorems~26 and 27 in \cite{JT45}. See also \cite[Corollary]{Fav08} for an explicit statement of \eqref{eq:nondiff1} in the case that $f$ is a Dirichlet polynomial. In particular, $\mathscr{J}_f$ is non-differentiable at a point $\sigma > 0$ if and only if the quantities in \eqref{eq:nondiff1} are non-zero, and there are only finitely many such points in any compact subinterval of $(0,\infty)$.

\section{The mean counting function} \label{sec:mean}
Throughout this section we assume that $f$ is a Dirichlet series with $\sigma_{\operatorname{u}}(f) \leq 0$. For $\sigma_0 > 0$, the connection between the Jessen function and our weighted mean counting function 
\begin{equation} \label{eq:Mphisigma0}
	\mathscr{M}_f(w, \sigma_0) = \lim_{T \to \infty} \frac{\pi}{T} \sum_{\substack{s \in f^{-1}(\{w\}) \\ \,\,\,\,\,|\mim{s}|<T \\ \sigma_0<\mre{s}<\infty}} \mre{s}
\end{equation}
is provided by Littlewood's lemma \cite{Littlewood24}. Littlewood's lemma is a rectangular version of Jensen's formula that is particularly well-suited to analyzing the zeros of Dirichlet series. The lemma is frequently used in analytic number theory (see \cite[Sec.~9.9]{Titchmarsh86}). Since evidently $\mathscr{M}_f(w,\sigma_0)=\mathscr{M}_{f-w}(0,\sigma_0)$, we will for simplicity formulate the next two results for $w=0$.

\begin{lemma} \label{lem:littlewoodlemma}
	Suppose that $\sigma_{\operatorname{u}}(f)\leq0$ and that $f(+\infty)\neq0$. For every $\sigma_0>0$ such that $f$ is zero-free on $\mre{s}=\sigma_0$,
	\begin{equation} \label{eq:Jensenfunc}
		\lim_{T\to\infty} \frac{\pi}{T} \sum_{\substack{s \in f^{-1}(\{0\}) \\ \,\,\,\,\,|\mim{s}|<T \\ \sigma_0<\mre{s}<\infty}} \big(\mre{s}-\sigma_0\big) = \mathscr{J}_f(\sigma_0)  - \log |f(+\infty)|.
	\end{equation}
\end{lemma}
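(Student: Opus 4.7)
My plan is to apply Littlewood's lemma~\cite{Littlewood24} to the rectangle $R_T = [\sigma_0, \sigma_1] \times [-T, T]$, for a suitably large $\sigma_1$. Because $f(+\infty) \neq 0$ and $f$ converges uniformly in every $\mathbb{C}_\varepsilon$ with $\varepsilon > 0$, one can fix $\sigma_1$ so that $|f(s) - f(+\infty)| < |f(+\infty)|/2$ throughout $\overline{\mathbb{C}_{\sigma_1}}$. This forces $f$ to be zero-free on $\overline{\mathbb{C}_{\sigma_1}}$, so the sum in \eqref{eq:Jensenfunc} coincides with the corresponding sum over zeros in $R_T$, and also gives $\mathscr{J}_f(\sigma_1) = \log|f(+\infty)|$ via Lemma~\ref{lem:Nout}. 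For a generic sequence of $T$, along which $f$ is zero-free also on the horizontal edges $t = \pm T$, Littlewood's lemma yields
\[
2\pi \sum_{\rho \in R_T}(\mre\rho - \sigma_0) = \int_{-T}^T \log|f(\sigma_0+it)|\,dt - \int_{-T}^T \log|f(\sigma_1+it)|\,dt + I(T),
\]
where $I(T)$ collects the integrals of $\arg f$ along the top and bottom edges of $R_T$, with $\arg f$ taken as a continuous determination along the boundary of $R_T$.

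Dividing by $2T$ and letting $T \to \infty$, the two $\log$-integrals tend to $\mathscr{J}_f(\sigma_0)$ and $\mathscr{J}_f(\sigma_1) = \log|f(+\infty)|$ respectively, by Lemma~\ref{lem:Jessen}. The identity \eqref{eq:Jensenfunc} therefore reduces to the estimate $I(T)/(2T) \to 0$ along this generic sequence, the full statement following by continuity in $\sigma_0$ and the generic-$\sigma_0$ hypothesis of the lemma.

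The main obstacle is the bound $I(T) = o(T)$. The choice of $\sigma_1$ ensures that the continuous boundary branch of $\arg f$ satisfies $\arg f(\sigma_1 \pm iT) = \arg f(+\infty) + O(1)$ uniformly in $T$; the remaining difficulty is to control the variation of $\arg f$ along the horizontal segments from $\sigma_1$ back to $\sigma_0$. The point here is that almost periodicity of $f$ on $\overline{\mathbb{C}_{\sigma_0/2}}$ combined with $f$ being zero-free on $t = \pm T$ (for generic $T$) keeps $|f|$ bounded below on these segments by a constant independent of $T$, whence the arc-length of $f(\sigma \pm iT)$ as $\sigma$ varies is $O(1)$ and so is its winding around the origin; by the Jessen--Tornehave mean-motion theorem \eqref{eq:mmcounting1}, the $O(T)$ portion of the boundary winding is concentrated on the vertical edges and does not leak into $I(T)$. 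Should this direct bound prove too delicate to execute cleanly, an alternative is to avoid Littlewood's lemma entirely and write the left-hand side of \eqref{eq:Jensenfunc} via Fubini as $2\pi\int_{\sigma_0}^\infty [N_T(\sigma)/(2T)]\,d\sigma$, where $N_T(\sigma) = |\{\rho \in f^{-1}(\{0\}) : \mre\rho > \sigma,\ |\mim\rho| < T\}|$. Dominated convergence, justified by the pointwise limit $N_T(\sigma)/(2T) \to \mathscr{Z}_f(\sigma)$ from Lemma~\ref{lem:Hfcont} together with a uniform bound $N_T(\sigma) \leq CT$ from Jensen's formula applied to the bounded holomorphic function $f/f(+\infty)$, yields the limit $\int_{\sigma_0}^\infty \mathscr{Z}_f(\sigma)\,d\sigma = (\mathscr{J}_f(\sigma_0) - \log|f(+\infty)|)/(2\pi)$ after integrating $\mathscr{Z}_f = -\mathscr{J}_f'/(2\pi)$ from \eqref{eq:mmcounting1} and invoking Lemma~\ref{lem:Nout}.
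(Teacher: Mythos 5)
Your primary argument follows the paper's own route (Littlewood's lemma on $[\sigma_0,\sigma_1]\times[-T,T]$, with the argument integrals controlled by a lower bound for $|f|$ on the horizontal edges together with the decay $|f'(s)|\ll 2^{-\mre{s}}$), but it has one genuine gap: the passage from the limit along your ``generic sequence'' of $T$'s to the limit over \emph{all} $T\to\infty$, which is what \eqref{eq:Jensenfunc} asserts. Your stated justification --- ``continuity in $\sigma_0$ and the generic-$\sigma_0$ hypothesis'' --- concerns the wrong variable; no regularity in $\sigma_0$ bridges the gaps between consecutive good ordinates $T$. What is needed is (i) to choose the good ordinates as a \emph{relatively dense} set $(T_j)_{j\geq1}$ on which $|f(\sigma\pm iT_j)|\geq c>0$ uniformly in $j$ and $\sigma\geq\sigma_0$ --- this is exactly what almost periodicity provides, and note that zero-freeness of a single horizontal segment only yields a $T$-dependent lower bound, so this selection must actually be made --- and then (ii) to control the increment of the monotone counting sum between $T_j$ and $T_{j+1}$. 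For (ii) one can either use monotonicity in $T$ together with $T_{j+1}/T_j\to1$, or, as the paper does, bound the number of zeros in any region $T^\ast-d<|\mim{s}|<T^\ast+d$, $\sigma_0\leq\mre{s}\leq\gamma$ by a constant independent of $T^\ast$ via normal families and Hurwitz's theorem. Separately, the appeal to the mean-motion theorem to claim that the ``$O(T)$ portion of the boundary winding is concentrated on the vertical edges'' is both unnecessary and unjustified: once $|f|\geq c$ on the horizontal segments, $I(T_j)=O(1)$ follows directly from $\int_{\sigma_0}^{\sigma_1}|f'(\sigma\pm iT_j)|\,d\sigma=O(1)$.

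Your fallback argument, by contrast, is sound and genuinely different from the paper's proof. Writing the weighted sum as $2\pi\int_{\sigma_0}^\infty N_T(\sigma)/(2T)\,d\sigma$, observing that $f$ is zero-free in $\overline{\mathbb{C}_\gamma}$ so that the integrand is supported in $[\sigma_0,\gamma]$ and dominated there by $\sup_{T\geq1}N_T(\sigma_0)/(2T)<\infty$, and invoking the pointwise limit $N_T(\sigma)/(2T)\to\mathscr{Z}_f(\sigma)$ from Lemma~\ref{lem:Hfcont}, one obtains $2\pi\int_{\sigma_0}^\infty\mathscr{Z}_f(\sigma)\,d\sigma=-\int_{\sigma_0}^\infty\mathscr{J}_f'(\sigma+0)\,d\sigma=\mathscr{J}_f(\sigma_0)-\log|f(+\infty)|$ by convexity and Lemma~\ref{lem:Nout}. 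This route imports the full strength of Jessen--Tornehave (the existence of $\mathscr{Z}_f$ and the identity \eqref{eq:mmcounting1}), which the paper deliberately postpones to Theorem~\ref{thm:littlewoodlemma}; in exchange it avoids all boundary-argument estimates and even dispenses with the hypothesis that $f$ be zero-free on $\mre{s}=\sigma_0$. If you pursue the write-up, I would recommend making this second argument the main one.
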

\begin{proof}
	Fix $\sigma_0>0$ such that $f$ is zero-free on $\mre{s}=\sigma_0$. By almost periodicity there exists an increasing sequence $(T_j)_{j\geq1}$ of positive real numbers which are relatively dense in $\mathbb{R}_+$, and for which $|f(\sigma \pm iT_j)| \geq c > 0$ for some constant $c$, for all $\sigma \geq \sigma_0$. Since $f(+\infty)\neq0$, we can, by Lemma~\ref{lem:bohr} and \eqref{eq:au12}, find $\gamma>\sigma_0$ such that the Dirichlet series
	\begin{equation} \label{eq:logdifff}
		\frac{f'(s)}{f(s)} = \sum_{n=2}^\infty b_n n^{-s}
	\end{equation}
	is absolutely convergent in $\overline{\mathbb{C}_\gamma}$. In particular, $f$ does not vanish in $\overline{\mathbb{C}_\gamma}$. For $T \in (T_j)_{j\geq1}$ and $\sigma_1>\gamma$, Littlewood's lemma says that
	\begin{multline*}
		2\pi \sum_{\substack{s \in f^{-1}(\{0\}) \\ \,\,\,\,|\mim{s}| < T \\ \sigma_0 < \mre{s}<\sigma_1}} \big(\mre{s}-\sigma_0\big) = \int_{-T}^T \log|f(\sigma_0+it)|\,dt - \int_{-T}^T \log|f(\sigma_1+it)|\,dt \\
		+\int_{\sigma_0}^{\sigma_1}\arg f(\sigma+iT)\,d\sigma-\int_{\sigma_0}^{\sigma_1} \arg f(\sigma-iT)\,d\sigma.
	\end{multline*}
	Here $\arg f(s)$ refers to a continuous branch of the argument in a simply connected set containing $+\infty$.
	
	First let $\sigma_1\to\infty$. Then clearly, 
	\[\lim_{\sigma_1\to \infty}\int_{-T}^T \log|f(\sigma_1+it)|\,dt =2T\log|f(+\infty)|.\]
	For $\sigma>\gamma$, integrating term-wise, we find that
	\[|\arg f(\sigma+iT) -\arg f(\sigma-iT)| = \left|\mim \int_{-T}^T \frac{f'(\sigma + it)}{f(\sigma + it)} \, dt \right| \leq \sum_{n=2}^\infty \frac{2 |b_n|}{n^{-\sigma}\log{n}} \ll 2^{-\sigma},\]
	since \eqref{eq:logdifff} is absolutely convergent in $\overline{\mathbb{C}_\gamma}$. Thus we have that
	\[\lim_{\sigma_1 \to \infty} \left(\int_{\gamma}^{\sigma_1}\arg f(\sigma+iT)\,d\sigma-\int_{\gamma}^{\sigma_1} \arg f(\sigma-iT)\,d\sigma \right) = O(1),\]
	independently of $T$. Note that this limit certainly exists, since the left-hand side in Littlewood's lemma is constant for fixed $T$ and $\sigma_1 > \gamma$.

	Secondly, a simple argument shows that $|f'(s)| \ll 2^{-\mre s}$ for $\mre s \geq \sigma_0>0$. Thus, by our choice of $(T_j)_{j\geq1}$,
	\[\arg f(\sigma \pm iT_j) = \arg f(+\infty) - \mim \int_{\sigma}^\infty \frac{f'(\varsigma \pm iT_j)}{f(\varsigma \pm iT_j)} \, d\varsigma = O(1)\]
	for all $\sigma \geq \sigma_0$. Therefore
	\[\int_{\sigma_0}^{\gamma} \arg f(\sigma \pm iT_j)\,d\sigma = O(1),\]
	independently of $j$.

	Taking the limit $\sigma_1 \to \infty$ in Littlewood's lemma, dividing by $2T_j$, and letting $j\to\infty$, we have now demonstrated that
	\[\lim_{j \to \infty} \frac{\pi}{T_j}\sum_{\substack{s \in f^{-1}(\{0\}) \\ \,\,\,\,\,\,\,|\mim{s}| < T_j \\ \sigma_0 < \mre s<\infty}} \big(\mre{s}-\sigma_0\big) = \mathscr{J}_f(\sigma_0) - \log|f(+\infty)|.\]
	Let $d = \sup_{j\geq1} (T_{j+1} - T_j)$. To show \eqref{eq:Jensenfunc} it is now sufficient to argue that
	\begin{equation} \label{eq:Tast}
		\lim_{T^\ast \to \infty} \frac{\pi}{T^\ast}\sum_{\substack{s \in f^{-1}(\{0\}) \\ T^\ast -d <|\mim{s}| < T^\ast + d \\ \sigma_0 < \mre{s}<\infty}} \big(\mre{s}-\sigma_0\big) = 0.
	\end{equation}
	But by almost periodicity, $\{f(s + iT^\ast)\}_{T^\ast > 0}$ is a normal family in $\sigma_0 \leq \mre s \leq \gamma$ which includes no sequence convergent to zero. Thus Hurwitz's theorem shows that there is a number $N$, independent of $T^\ast$, such that $f(s + iT^\ast) = 0$ has at most $N$ zeros for $\sigma_0 \leq \mre s \leq \gamma$ and $|\mim s| < d$. Since $f$ can not have any zeros for $\mre s > \gamma$, we therefore have that
	\[\frac{\pi}{T^\ast}\sum_{\substack{s \in f^{-1}(\{0\}) \\ T^* -d <|\mim{s}| < T^* + d \\ \sigma_0 < \mre{s}<\infty}} \big(\mre{s}-\sigma_0\big) \leq \frac{2\pi N(\gamma - \sigma_0)}{T^\ast},\]
	from which \eqref{eq:Tast} follows at once.
\end{proof}

We will now bring the work of Jessen and Tornehave into play. Namely, we will use Lemma~\ref{lem:Hfcont} to extend the validity of \eqref{eq:Jensenfunc} to every $\sigma_0 > 0$, and to establish the existence of the mean counting function \eqref{eq:Mphisigma0}.

\begin{theorem} \label{thm:littlewoodlemma}
	Suppose that $\sigma_{\operatorname{u}}(f)\leq0$ and that $f(+\infty)\neq0$. For every $\sigma_0>0$ 
	\[\mathscr{M}_f(0, \sigma_0) = \lim_{T\to\infty} \frac{\pi}{T} \sum_{\substack{s \in f^{-1}(\{0\}) \\ \,\,\,\,\,|\mim{s}|<T \\ \sigma_0<\mre{s}<\infty}} \mre{s}\]
	exists and is right-continuous. Moreover,
	\begin{equation} \label{eq:countingJHrep}
		\mathscr{M}_f(0, \sigma_0) = \mathscr{J}_f(\sigma_0) - \log |f(+\infty)| + 2\pi\sigma_0\mathscr{Z}_f(\sigma_0).
	\end{equation}
\end{theorem}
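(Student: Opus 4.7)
The plan is to reduce to the case where Lemma~\ref{lem:littlewoodlemma} directly applies, and then extend to all $\sigma_0 > 0$ by a sandwich/right-continuity argument. Let $E \subset (0,\infty)$ denote the set of $\sigma > 0$ for which $f$ has a zero on the line $\mre s = \sigma$. Since $f \not\equiv 0$ is analytic in $\mathbb{C}_0$, its zero set is discrete and hence at most countable, so $E$ is countable, and in particular $(\sigma_0,\infty) \setminus E$ is dense in $(\sigma_0,\infty)$ for every $\sigma_0 \geq 0$.

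First I would prove \eqref{eq:countingJHrep} for $\sigma_0 \notin E$. Writing $N(\sigma_0, T)$ for the number of zeros of $f$ in the rectangle $\sigma_0 < \mre s < \infty$, $|\mim s|<T$, Lemma~\ref{lem:littlewoodlemma} yields
\[\lim_{T\to\infty} \frac{\pi}{T} \sum_{\substack{s \in f^{-1}(\{0\}) \\ |\mim s|<T \\ \sigma_0<\mre s<\infty}} (\mre s - \sigma_0) = \mathscr{J}_f(\sigma_0) - \log|f(+\infty)|,\]
while by Lemma~\ref{lem:Hfcont} the limit $\lim_{T\to\infty} \frac{1}{2T} N(\sigma_0, T) = \mathscr{Z}_f(\sigma_0)$ exists. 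Adding $\sigma_0 \cdot \frac{\pi}{T} N(\sigma_0,T)$ to both sides and passing to the limit gives the existence of $\mathscr{M}_f(0,\sigma_0)$ and the identity \eqref{eq:countingJHrep} for every $\sigma_0 \notin E$.

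Next I would extend the result to arbitrary $\sigma_0 > 0$. Pick any $\sigma_0' \in (\sigma_0,\infty)\setminus E$ and split the counting sum at $\sigma_0'$. The tail over $\sigma_0' < \mre s < \infty$ has limit $\mathscr{M}_f(0,\sigma_0')$ by the previous step, while the remainder is sandwiched via the trivial bounds
\[\sigma_0\bigl(N(\sigma_0,T) - N(\sigma_0',T)\bigr) \leq \sum_{\substack{s \in f^{-1}(\{0\}) \\ |\mim s|<T \\ \sigma_0 < \mre s \leq \sigma_0'}} \mre s \leq \sigma_0'\bigl(N(\sigma_0,T) - N(\sigma_0',T)\bigr).\]
Dividing by $T/\pi$ and letting $T\to\infty$, and then substituting the formula already established for $\mathscr{M}_f(0,\sigma_0')$, both the resulting upper and lower bounds converge as $\sigma_0' \to \sigma_0^+$ through $(\sigma_0,\infty)\setminus E$ to the common value $\mathscr{J}_f(\sigma_0) - \log|f(+\infty)| + 2\pi\sigma_0\mathscr{Z}_f(\sigma_0)$. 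Here I would use continuity of the Jessen function (Lemma~\ref{lem:Jessen}) and right-continuity of $\mathscr{Z}_f$ (Lemma~\ref{lem:Hfcont}). This pinches the liminf and limsup of the full sum together, proving existence of $\mathscr{M}_f(0,\sigma_0)$ and \eqref{eq:countingJHrep} for every $\sigma_0 > 0$.

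Once \eqref{eq:countingJHrep} is established, right-continuity of $\sigma_0 \mapsto \mathscr{M}_f(0,\sigma_0)$ is immediate, since the right-hand side is a sum of a continuous function and a right-continuous one. The main subtlety I anticipate is bookkeeping in the sandwich step: zeros lying exactly on the line $\mre s = \sigma_0$ do not contribute to $\mathscr{M}_f(0,\sigma_0)$ because the sum is strict, yet they can enter the difference $\mathscr{Z}_f(\sigma_0) - \mathscr{Z}_f(\sigma_0')$ for $\sigma_0' > \sigma_0$. Right-continuity of $\mathscr{Z}_f$ ensures that this contribution vanishes in the limit $\sigma_0' \to \sigma_0^+$, which is exactly what makes the sandwich collapse. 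Beyond this point the argument is routine, all the heavy lifting having been done by Lemma~\ref{lem:littlewoodlemma} and the Jessen--Tornehave theory.
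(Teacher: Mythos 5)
Your proof is correct and follows essentially the same route as the paper: both arguments first obtain the identity on the dense set of zero-free vertical lines via Lemma~\ref{lem:littlewoodlemma}, and both control the contribution of the strip $\sigma_0 < \mre s \leq \sigma_0'$ by $2\pi\sigma_0'\bigl(\mathscr{Z}_f(\sigma_0)-\mathscr{Z}_f(\sigma_0')\bigr)$, letting the right-continuity of $\mathscr{Z}_f$ from Lemma~\ref{lem:Hfcont} collapse the sandwich as $\sigma_0'\to\sigma_0^+$. The only cosmetic difference is that the paper phrases this as right-continuity of the limsup and liminf versions $\mathscr{M}^{\pm}_f(0,\sigma)$ before identifying them, whereas you squeeze the full sum directly; the inputs and the mechanism are identical.
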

\begin{proof}
	From Lemma~\ref{lem:Hfcont} we know that $\mathscr{Z}_f$ is right-continuous on $(0, \infty)$. Therefore the same is true of
	\[\mathscr{M}^+_{f}(0, \sigma) = \varlimsup_{T\to\infty} \frac{\pi}{T} \sum_{\substack{s \in f^{-1}(\{0\}) \\ \,\,|\mim{s}| < T \\ \sigma < \mre{s}<\infty}} \mre s,\]
	and
	\[\mathscr{M}^-_{f}(0, \sigma) = \varliminf_{T\to\infty} \frac{\pi}{T} \sum_{\substack{s \in f^{-1}(\{0\}) \\ \,\,|\mim{s}| < T \\ \sigma < \mre{s}<\infty}} \mre s,\]
	as can be seen from the inequalities
	\begin{align*}
		0 \leq \mathscr{M}^{\pm}_{f}(0, \sigma) - \mathscr{M}^\pm_{f}(0, \sigma + \varepsilon) &\leq \varlimsup_{T\to\infty} \frac{\pi}{T} \sum_{\substack{s \in f^{-1}(\{0\}) \\ |\mim{s}| < T \\ \,\,\,\sigma < \mre{s} \leq \sigma + \varepsilon}} \mre s \\ 
		&\leq 2\pi(\sigma + \varepsilon) (\mathscr{Z}_f(\sigma) - \mathscr{Z}_f(\sigma + \varepsilon)), \qquad \varepsilon > 0.
	\end{align*}
	Since we know from Lemma~\ref{lem:littlewoodlemma} that \eqref{eq:Jensenfunc} holds when $f$ is zero-free on $\mre s = \sigma_0$, the existence of $\mathscr{Z}_f(\sigma_0)$ implies that $\mathscr{M}^+_{f}(0, \sigma_0) = \mathscr{M}^-_{f}(0, \sigma_0)$ in this case. By the just demonstrated right continuity, this continues to hold for every $\sigma_0 > 0$, and thus
\[\mathscr{M}_f(0, \sigma) = \lim_{T\to\infty} \frac{\pi}{T} \sum_{\substack{s \in f^{-1}(\{0\}) \\ \,\,|\mim{s}| < T \\ \sigma < \mre{s}<\infty}} \mre s\]
	exists and is right-continuous for $\sigma > 0$. Approaching $\mre s = \sigma_0$ from the right on zero-free lines, and using that the Jessen function itself is continuous, we conclude that \eqref{eq:Jensenfunc} holds for every $\sigma_0 > 0$, which is \eqref{eq:countingJHrep}.
\end{proof}

By \eqref{eq:countingJHrep}, \eqref{eq:mmcounting2}, and the continuity of the Jessen function, we see that if $\sigma_{\operatorname{u}}(f)\leq0$, $f(+\infty)\neq0$ and $\sigma>0$, then
\[\lim_{\varepsilon \to 0^+} \big(\mathscr{M}_f(0, \sigma- \varepsilon) - \mathscr{M}_f(0, \sigma+\varepsilon)\big) = \sigma\big(\mathscr{J}'_{f}(\sigma + 0) - \mathscr{J}'_{f}(\sigma - 0)\big).\]
It is also self-evident that $\mathscr{M}_f(0, \sigma)$ is a non-increasing function of $\sigma$. By the differentiability properties of the Jessen function outlined in Section~\ref{sec:JT}, we therefore obtain the following result, after recalling that $\mathscr{M}_f(w,\sigma_0)=\mathscr{M}_{f-w}(0,\sigma)$.

\begin{corollary} \label{cor:Mprop}
	Suppose that $\sigma_{\operatorname{u}}(f)\leq 0$ and that $w \neq f(+\infty)$. Then $\mathscr{M}_f(w, \sigma)$ is non-increasing and right-continuous in $\sigma > 0$. The number of discontinuities is finite in any compact subset of $(0,\infty)$, and the points of discontinuity are precisely the points of non-differentiability of the Jessen function $\mathscr{J}_{f-w}.$
\end{corollary}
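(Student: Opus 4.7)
The plan is to reduce the result to Theorem~\ref{thm:littlewoodlemma} together with the Jessen--Tornehave facts outlined in Section~\ref{sec:JT}. First, since $\mathscr{M}_f(w,\sigma) = \mathscr{M}_{f-w}(0,\sigma)$ and $(f-w)(+\infty) = f(+\infty)-w \neq 0$, we may assume $w=0$ and $f(+\infty)\neq 0$.

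That $\mathscr{M}_f(0,\sigma)$ is non-increasing in $\sigma$ is immediate from the definition \eqref{eq:Mphisigma0}: enlarging $\sigma$ can only remove non-negative terms from the sum, since each term satisfies $\mre s > \sigma > 0$. Right-continuity in $\sigma > 0$ has already been established in Theorem~\ref{thm:littlewoodlemma}.

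The identification of the discontinuities relies on the representation
\[\mathscr{M}_f(0, \sigma) = \mathscr{J}_f(\sigma) - \log |f(+\infty)| + 2\pi\sigma\mathscr{Z}_f(\sigma)\]
from Theorem~\ref{thm:littlewoodlemma}. Since $\mathscr{J}_f$ is convex on $(0,\infty)$ by Lemma~\ref{lem:Jessen}, it is continuous there, and the factor $2\pi\sigma$ is continuous and non-vanishing; hence the discontinuities of $\mathscr{M}_f(0,\cdot)$ on $(0,\infty)$ coincide with those of $\mathscr{Z}_f$. By \eqref{eq:nondiff1}, a point $\sigma > 0$ is a discontinuity of $\mathscr{Z}_f$ if and only if
\[\mathscr{J}'_f(\sigma + 0) - \mathscr{J}'_f(\sigma - 0) > 0,\]
i.e., if and only if $\mathscr{J}_f$ fails to be differentiable at $\sigma$. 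As recalled in Section~\ref{sec:JT}, Jessen and Tornehave showed that these points are locally finite in $(0,\infty)$, which gives the finiteness of discontinuities in any compact subinterval.

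Combining these computations also yields the jump formula
\[\lim_{\varepsilon \to 0^+}\bigl(\mathscr{M}_f(0, \sigma - \varepsilon) - \mathscr{M}_f(0, \sigma+\varepsilon)\bigr) = 2\pi\sigma\lim_{\varepsilon \to 0^+}\bigl(\mathscr{Z}_f(\sigma - \varepsilon) - \mathscr{Z}_f(\sigma + \varepsilon)\bigr) = \sigma\bigl(\mathscr{J}'_f(\sigma + 0) - \mathscr{J}'_f(\sigma - 0)\bigr),\]
which was also noted just before the statement of the corollary. Since essentially all substantive work has already been carried out in Theorem~\ref{thm:littlewoodlemma} and in the Jessen--Tornehave discussion of Section~\ref{sec:JT}, there is no real obstacle here: the corollary is obtained simply by reading off consequences from the identity linking $\mathscr{M}_f(0,\sigma)$, $\mathscr{J}_f$, and $\mathscr{Z}_f$.
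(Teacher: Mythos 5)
Your argument is correct and follows the paper's own route: reduce to $w=0$, invoke the identity $\mathscr{M}_f(0,\sigma) = \mathscr{J}_f(\sigma) - \log|f(+\infty)| + 2\pi\sigma\mathscr{Z}_f(\sigma)$ and the right-continuity from Theorem~\ref{thm:littlewoodlemma}, and then read off the discontinuities from \eqref{eq:nondiff1}, \eqref{eq:mmcounting2}, the continuity of the Jessen function, and the Jessen--Tornehave local finiteness of its points of non-differentiability. This matches the paper's (more compressed) justification essentially verbatim.
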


We now prove the existence of the mean counting function for $f \in \mathscr{N}_{\operatorname{u}}$.
\begin{theorem} \label{thm:Mexistgeneral}
	Let $f \in \mathscr{N}_{\operatorname{u}}$ and $w \neq f(+\infty)$. Then the limits
	\[\mathscr{M}_f(w) = \lim_{\sigma_0 \to 0^+} \mathscr{M}_{f}(w, \sigma_0) = \lim_{\sigma_0 \to 0^+}  \mathscr{J}_{f- w}(\sigma_0) - \log |f(+\infty) - w|\]
	exist and are equal.
\end{theorem}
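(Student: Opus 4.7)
The plan is to reduce to Theorem~\ref{thm:littlewoodlemma}, applied to $g = f - w$. Since $\sigma_{\operatorname{u}}(g) = \sigma_{\operatorname{u}}(f) \leq 0$ and $g(+\infty) = f(+\infty) - w \neq 0$, that theorem gives, for every $\sigma_0 > 0$, the identity
\begin{equation} \label{eq:proofplan}
\mathscr{M}_f(w, \sigma_0) = \mathscr{J}_{f-w}(\sigma_0) - \log |f(+\infty) - w| + 2\pi \sigma_0 \mathscr{Z}_{f-w}(\sigma_0).
\end{equation}
The claim therefore reduces to showing that $\lim_{\sigma_0 \to 0^+} \mathscr{J}_{f-w}(\sigma_0)$ exists and is finite, and that $\sigma_0 \mathscr{Z}_{f-w}(\sigma_0) \to 0$.

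First I would verify that $g = f-w$ still lies in $\mathscr{N}_{\operatorname{u}}$, which is immediate from the pointwise estimate $\log^+|f-w| \leq \log^+|f| + \log^+|w| + \log 2$ inside the mean integral defining \eqref{eq:Nudef}. Applying Lemma~\ref{lem:Nu} to $g$, the mean of $\big|\log|g(\sigma+it)|\big|$ is bounded as $\sigma \to 0^+$. Since by Lemma~\ref{lem:Jessen} the Jessen function $\mathscr{J}_g(\sigma)$ coincides with the mean of $\log|g(\sigma+it)|$ and is a non-increasing function of $\sigma > 0$, we deduce that $\mathscr{J}_g$ is bounded from above near $0$. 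Combined with monotonicity, the limit $\mathscr{J}_g(0^+) := \lim_{\sigma_0 \to 0^+} \mathscr{J}_g(\sigma_0)$ exists and is finite.

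The remaining step is to show $\sigma_0 \mathscr{Z}_g(\sigma_0) \to 0$. By \eqref{eq:mmcounting1}, $\mathscr{Z}_g(\sigma_0) = -\tfrac{1}{2\pi} \mathscr{J}_g'(\sigma_0 + 0)$, and since $\mathscr{J}_g$ is convex and non-increasing, the right-derivative $\mathscr{J}_g'(\cdot + 0)$ is non-decreasing and non-positive on $(0,\infty)$. For any $0 < \sigma < \sigma_1$, monotonicity of the right-derivative yields
\begin{equation*}
\mathscr{J}_g(\sigma) - \mathscr{J}_g(\sigma_1) = -\int_{\sigma}^{\sigma_1} \mathscr{J}_g'(\tau + 0) \, d\tau \geq (\sigma_1 - \sigma) \cdot \bigl(-\mathscr{J}_g'(\sigma + 0)\bigr),
\end{equation*}
so letting $\sigma_1 \to 0^+$ from above $\sigma$ (using that $\mathscr{J}_g$ is continuous on $(0,\infty)$ by convexity and that the limit at $0^+$ exists) gives, after relabelling,
\begin{equation*}
\sigma_0 \cdot \bigl|\mathscr{J}_g'(\sigma_0 + 0)\bigr| \leq \mathscr{J}_g(0^+) - \mathscr{J}_g(\sigma_0) + \sigma_0 \cdot \bigl|\mathscr{J}_g'(\sigma_0 + 0)\bigr| \cdot 0,
\end{equation*}
more cleanly: picking $\sigma_1 = 2\sigma_0$ gives $\sigma_0 |\mathscr{J}_g'(\sigma_0 + 0)| \leq \mathscr{J}_g(\sigma_0) - \mathscr{J}_g(2\sigma_0)$, which tends to $0$ as $\sigma_0 \to 0^+$ since $\mathscr{J}_g(0^+)$ is finite. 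Hence $\sigma_0 \mathscr{Z}_g(\sigma_0) \to 0$, and passing to the limit in \eqref{eq:proofplan} proves the theorem.

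The main conceptual obstacle is step two, namely controlling the vanishing of $\sigma_0 \mathscr{Z}_{f-w}(\sigma_0)$; this is where the hypothesis $f \in \mathscr{N}_{\operatorname{u}}$ is truly used, because it both guarantees existence of $\mathscr{J}_g(0^+)$ and forbids the convex function $\mathscr{J}_g$ from having a vertical cusp at $0$. Everything else is mechanical bookkeeping from the identity \eqref{eq:proofplan}.
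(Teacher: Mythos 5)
Your overall strategy is sound and runs parallel to the paper's: reduce to $g=f-w$, check $g\in\mathscr{N}_{\operatorname{u}}$, invoke Theorem~\ref{thm:littlewoodlemma}, obtain the existence and finiteness of $\lim_{\sigma_0\to0^+}\mathscr{J}_g(\sigma_0)$ from Lemma~\ref{lem:Nu} together with monotonicity, and then show that $\sigma_0\mathscr{Z}_g(\sigma_0)\to0$. However, the convexity estimate you use in the last step points the wrong way. Since $\mathscr{J}_g$ is convex, $\mathscr{J}_g'(\cdot+0)$ is non-decreasing, so for $\tau\in[\sigma_0,2\sigma_0]$ one has $-\mathscr{J}_g'(\tau+0)\leq-\mathscr{J}_g'(\sigma_0+0)$, and therefore
\[
\mathscr{J}_g(\sigma_0)-\mathscr{J}_g(2\sigma_0)=-\int_{\sigma_0}^{2\sigma_0}\mathscr{J}_g'(\tau+0)\,d\tau\leq\sigma_0\,\bigl|\mathscr{J}_g'(\sigma_0+0)\bigr|.
\]
Your claimed inequality $\sigma_0\,|\mathscr{J}_g'(\sigma_0+0)|\leq\mathscr{J}_g(\sigma_0)-\mathscr{J}_g(2\sigma_0)$ is the reverse of this and is false in general (it is a lower bound, not an upper bound), so the step as written fails. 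The repair is immediate: integrate over the interval to the \emph{left} of $\sigma_0$. For $\tau\in[\sigma_0/2,\sigma_0]$ one has $-\mathscr{J}_g'(\tau+0)\geq-\mathscr{J}_g'(\sigma_0+0)$, whence
\[
\sigma_0\,\bigl|\mathscr{J}_g'(\sigma_0+0)\bigr|\leq 2\bigl(\mathscr{J}_g(\sigma_0/2)-\mathscr{J}_g(\sigma_0)\bigr),
\]
and the right-hand side tends to $0$ as $\sigma_0\to0^+$ precisely because the limit $\mathscr{J}_g(0^+)$ exists and is finite. With this one-line correction your argument is complete.

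For comparison, the paper handles the term $\sigma_0\mathscr{Z}_g(\sigma_0)$ on the counting-function side rather than the Jessen-function side: it first proves that $\mathscr{M}_g(0,\sigma_0)$ converges as $\sigma_0\to0^+$ (using the elementary bound $\mre s<2(\mre s-\sigma_0/2)$ together with Theorem~\ref{thm:littlewoodlemma}), and then dominates the zeros in a strip $\sigma_0<\mre s<\varepsilon$ by $\mathscr{M}_g(0,\sigma_0)-\mathscr{M}_g(0,\varepsilon)$. Your route instead leans on the Jessen--Tornehave identity \eqref{eq:mmcounting1} and convexity; once corrected it is slightly more streamlined, at the cost of explicitly invoking that identity for $g$.
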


\begin{proof}
	By the inequality $\log^+|f-w| \leq |w| + \log^+|f|$, we have that $f - w \in \mathscr{N}_{\operatorname{u}}$. We may therefore assume without loss of generality that $w=0$ and that $f(+\infty)\neq0$. Since $\mathscr{J}_f(\sigma_0)$ is non-increasing in $\sigma_0$, we conclude that $\lim_{\sigma_0\to 0^+} \mathscr{J}_f(\sigma_0)$ exists, by Lemma~\ref{lem:Nu}. By Theorem~\ref{thm:littlewoodlemma}, we therefore have that
	\[0 \leq \mathscr{M}_{f}(0, \sigma_0) - 2\pi\sigma_0 \mathscr{Z}_f(\sigma_0) \leq \lim_{\sigma_0 \to 0^+} \mathscr{J}_f(\sigma_0) -  \log |f(+\infty)| < \infty,\]
	for all $\sigma_0>0$. Since $\mre{s}< 2 (\mre{s}-\sigma_0/2)$ for $\mre{s}>\sigma_0$, we find that
	\begin{align*}
		\mathscr{M}_{f}(0, \sigma_0) &< 2 \big(\mathscr{M}_{f}(0, \sigma_0/2) - 2\pi\sigma_0 \mathscr{Z}_f(\sigma_0/2)\big) \\ &\leq 2\left(\lim_{\sigma_0 \to 0^+} \mathscr{J}_f(\sigma_0) -  \log |f(+\infty)|\right) < \infty.
	\end{align*}
	Therefore
	\[\mathscr{M}_f(0) = \lim_{\sigma_0 \to 0^+} \mathscr{M}_f(0,\sigma_0)\]
	exists, since $\mathscr{M}_f(0,\sigma_0)$ is obviously non-increasing in $\sigma_0$.
	
	By Theorem~\ref{thm:littlewoodlemma} it remains to prove that
	\[\lim_{\sigma_0 \to 0^+} \sigma_0 \mathscr{Z}_f(\sigma_0)=0.\]
	For $\varepsilon>\sigma_0 >0$ such that $f$ is zero-free on $\mre{s}=\varepsilon$, we write
	\[\sigma_0 \mathscr{Z}_f(\sigma_0) = \sigma_0 \mathscr{Z}_f(\varepsilon) + \lim_{T \to \infty} \frac{1}{2T} \sum_{\substack{s \in f^{-1}(\{0\}) \\ \,\,\,\,\,\,\,\,|\mim{s}| < T \\ \sigma_0 < \mre{s} < \varepsilon}} \sigma_0.\]
	Clearly,
	\[\lim_{T \to \infty} \frac{1}{2T} \sum_{\substack{s \in f^{-1}(\{0\}) \\ \,\,\,\,\,\,\,\,|\mim{s}| < T \\ \sigma_0 < \mre{s} < \varepsilon}} \sigma_0\leq \frac{2}{\pi}(\mathscr{M}_f(0, \sigma_0) - \mathscr{M}_f(0, \varepsilon)).\]
	We have already seen that the right-hand side converges to zero as $\varepsilon \to 0^+$, uniformly in $0 < \sigma_0 < \varepsilon$. Hence, by first choosing $\varepsilon$ small and then letting $\sigma_0 \to 0^+$, we see that $\lim_{\sigma_0 \to 0^+} \sigma_0 \mathscr{Z}_f(\sigma_0) = 0.$
\end{proof}

The following result is needed in the proof of Theorem~\ref{thm:compact}. 

\begin{lemma} \label{lem:submean}
	Suppose that $f\in\mathscr{N}_{\operatorname{u}}$. The mean counting function $\mathscr{M}_f$ satisfies the submean value property,
	\begin{equation} \label{eq:submean}
		\mathscr{M}_f(w) \leq \frac{1}{\pi r^2} \int_{\mathbb{D}(w,r)} \mathscr{M}_f(s)\,ds,
	\end{equation}
	for every open disc $\mathbb{D}(w,r)$ which does not contain $f(+\infty)$.
\end{lemma}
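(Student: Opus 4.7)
The plan is to reduce the submean inequality for $\mathscr{M}_f$ to a corresponding mean value inequality for the Jessen function $\mathscr{J}_{f-\nu}$, exploiting the subharmonicity of $\nu \mapsto \log|f_\chi(\sigma) - \nu|$. By Theorem~\ref{thm:Mexistgeneral}, for every $\nu \in \mathbb{C} \setminus \{f(+\infty)\}$ we have
\[\mathscr{M}_f(\nu) + \log|f(+\infty) - \nu| = \lim_{\sigma \to 0^+} \mathscr{J}_{f-\nu}(\sigma),\]
and Lemma~\ref{lem:Jessen} provides the representation $\mathscr{J}_{f-\nu}(\sigma) = \int_{\mathbb{T}^\infty} \log|f_\chi(\sigma) - \nu|\,dm_\infty(\chi)$ for every $\sigma > 0$.

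The heart of the argument is as follows. For each fixed $\chi \in \mathbb{T}^\infty$ and $\sigma > 0$, the function $\nu \mapsto \log|f_\chi(\sigma) - \nu|$ is subharmonic on $\mathbb{C}$, so the area submean inequality holds on $\mathbb{D}(w,r)$. Integrating in $\chi$ and applying Fubini---whose hypotheses are routine to verify, using that $\int_{\mathbb{D}(w,r)} \log^{-}|f_\chi(\sigma) - \nu|\,d\nu$ is bounded uniformly in $\chi$ and that $\log^{+}|f_\chi(\sigma)| \in L^1(\mathbb{T}^\infty)$ since $f(\,\cdot\, + \sigma)\in\mathscr{H}^p$---yields
\[\mathscr{J}_{f-w}(\sigma) \leq \frac{1}{\pi r^2}\int_{\mathbb{D}(w,r)} \mathscr{J}_{f-\nu}(\sigma)\,d\nu.\]
Next, I would pass to the limit as $\sigma \to 0^+$. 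By Theorem~\ref{thm:logintdown} (or Lemma~\ref{lem:Jessen}), the Jessen function is non-increasing in $\sigma$, so $\mathscr{J}_{f-\nu}(\sigma)$ is monotone non-decreasing as $\sigma \to 0^+$, with a finite limit at every $\nu \in \mathbb{D}(w,r)$ by Theorem~\ref{thm:Mexistgeneral} and the assumption $f(+\infty) \notin \mathbb{D}(w,r)$. After subtracting the integrable function $\mathscr{J}_{f-\nu}(\sigma_1)$ for some fixed $\sigma_1 > 0$ to obtain a non-negative, monotone integrand, the monotone convergence theorem gives
\[\lim_{\sigma \to 0^+}\mathscr{J}_{f-w}(\sigma) \leq \frac{1}{\pi r^2}\int_{\mathbb{D}(w,r)} \big(\mathscr{M}_f(\nu) + \log|f(+\infty) - \nu|\big)\,d\nu.\]

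The final step is to eliminate the auxiliary logarithmic terms. Since $f(+\infty) \notin \mathbb{D}(w,r)$, the function $\nu \mapsto \log|f(+\infty) - \nu|$ is harmonic on $\mathbb{D}(w,r)$, so the area mean value property gives
\[\log|f(+\infty) - w| = \frac{1}{\pi r^2}\int_{\mathbb{D}(w,r)} \log|f(+\infty) - \nu|\,d\nu.\]
Subtracting this identity from the previous display and using Theorem~\ref{thm:Mexistgeneral} once more on the left produces exactly \eqref{eq:submean}. The only technical care is needed in the interchange of limit and integral in $\sigma$, but this is handled cleanly by the monotonicity of the Jessen function; no machinery beyond subharmonicity is required.
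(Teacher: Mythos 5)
Your proposal is correct and follows essentially the same route as the paper's proof: the Jessen-function representation from Lemma~\ref{lem:Jessen}, subharmonicity of $\nu \mapsto \log|f_\chi(\sigma)-\nu|$ plus Fubini, monotone convergence in $\sigma$ (the paper uses the lower bound $\mathscr{J}_{f-s}(\sigma) \geq \log|f(+\infty)-s|$ where you subtract $\mathscr{J}_{f-\nu}(\sigma_1)$, but these are equivalent devices), and finally the harmonicity of $\nu \mapsto \log|f(+\infty)-\nu|$ on the disc. No gaps.
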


\begin{proof}
	By Lemma~\ref{lem:Jessen}, we have for $\sigma>0$ that
	\[\mathscr{J}_{f-w}(\sigma) = \int_{\mathbb{T}^\infty} \log|f_\chi(\sigma)-w|\,dm_\infty(\chi).\]
	Since $w \mapsto \log|f_\chi(\sigma)-w|$ is subharmonic for fixed $\chi\in\mathbb{T}^\infty$, it follows from Fubini's theorem that
	\begin{equation} \label{eq:Jsubmean}
		\mathscr{J}_{f-w}(\sigma) \leq \frac{1}{\pi r^2} \int_{\mathbb{D}(w,r)} \mathscr{J}_{f-s}(\sigma)\,ds
	\end{equation}
	for every disc $\mathbb{D}(w,r)$. By Theorem~\ref{thm:Mexistgeneral}, we have that
	\[\lim_{\sigma\to0^+} \mathscr{J}_{f-s}(\sigma) = \mathscr{M}_f(s) + \log|f(+\infty)-s|\]
	for every $s \neq f(+\infty)$. Furthermore, 
\[\mathscr{J}_{f-s}(\sigma) \geq  \log|f(+\infty)-s|\]
for every $\sigma > 0$. Since $\sigma \mapsto \mathscr{J}_{f-s}(\sigma)$ is non-increasing in $\sigma$, we may therefore apply the monotone convergence theorem to \eqref{eq:Jsubmean}. If $\mathbb{D}(w,r)$ does not contain $f(+\infty)$ this gives us \eqref{eq:submean}, by the harmonicity of $s \mapsto \log|f(+\infty)-s|$ in this disc. 
\end{proof}

We are now in a position to prove the first of our main results.

\begin{proof}[Proof of Theorem~\ref{thm:mean}]
	Theorem~\ref{thm:Mexistgeneral} directly gives the existence of the mean counting function for symbols $\varphi \in \mathscr{G}_0$ in the Gordon--Hedenmalm class, since Lemma~\ref{lem:map0p1} implies that $\mathscr{G}_0 \subset \mathscr{N}_{\operatorname{u}}$. We give an alternative argument here, which also yields the point-wise estimate \eqref{eq:littlewoodineqM}. For a point $w \in \mathbb{C}_{1/2}\setminus\{\varphi(+\infty)\}$, consider as in \eqref{eq:Phiw} the Dirichlet series
	\[\Phi_w(s) = \frac{w-\varphi(s)}{\overline{w}+\varphi(s)-1},\]
	and observe that
	\[\mathscr{M}_\varphi(w)  = \mathscr{M}_{\Phi_w}(0) = \lim_{\sigma \to 0^+}  \mathscr{M}_{\Phi_w}(0, \sigma).\]
	Here we have noted that $\Phi_w \in \mathscr{H}^\infty \subset \mathscr{N}_{\operatorname{u}}$ and applied Theorem~\ref{thm:Mexistgeneral}. Because $\Phi_w$ maps $\mathbb{C}_0$ to $\mathbb{D}$, we see furthermore that
	\[\mathscr{M}_\varphi(w) = \lim_{\sigma \to 0^+} \mathscr{J}_{\Phi_w}(\sigma) - \log|\Phi_w(+\infty)| \leq -\log|\Phi_w(+\infty)|.\]
	Since
	\[-\log|\Phi_w(+\infty)| = \log\left|\frac{\overline{w}+\varphi(+\infty)-1}{w-\varphi(+\infty)}\right|\]
	this finishes the proof.
\end{proof}

Suppose that $\varphi \in \mathscr{G}_0$ with $\varphi(+\infty) = \nu \in \mathbb{C}_{1/2}$, and let $\varphi_\nu$ be the extremal symbol of \eqref{eq:varphinu}. Then $\|\mathscr{C}_\varphi f\|_{\mathscr{H}^2} \leq \|\mathscr{C}_{\varphi_\nu} f\|_{\mathscr{H}^2}$ for every $f\in\mathscr{H}^2$ (see \cite[Sec.~2]{Brevig17}). In \cite[Thm.~21]{BP20}, it was proven that the following are equivalent:
\begin{enumerate}
	\item[(a)] $\mre \varphi^\ast(\chi) = 1/2$ for almost every $\chi \in \mathbb{T}^\infty$.
	\item[(b)] $\|\mathscr{C}_\varphi f\|_{\mathscr{H}^2} = \|\mathscr{C}_{\varphi_\nu} f\|_{\mathscr{H}^2}$ for every $f \in \mathscr{H}^2$.
	\item[(c)] $\|\mathscr{C}_\varphi\|_{\mathscr{H}^2} = \|\mathscr{C}_{\varphi_\nu}\|_{\mathscr{H}^2}$.
\end{enumerate}
Condition (a) is perhaps less tangible than it may seem. Indeed, in \cite[Thm.~1]{SS09} a function $g \in \mathscr{H}^\infty$ is constructed for which $\|g\|_{\mathscr{H}^\infty} = 1$ and $|g(it)| = 1$ for almost every $t \in \mathbb{R}$, but such that $g$ is not inner, in the sense that $|g^\ast(\chi)| < 1$ on a set of positive measure in $\mathbb{T}^\infty$. Here $g(it)$ refers to the usual boundary values of $g$ in $\mathbb{C}_0$,
\[g(it) = \lim_{\sigma \to 0^+} g(\sigma + it).\]
This is easily translated to an example of a function $\varphi \in \mathscr{G}_0$ such that $\mre \varphi(it) = 1/2$ for almost every $t \in \mathbb{R}$, but for which $\mre \varphi^\ast(\chi) > 1/2$ on a set of positive measure.

Our second main result provides an alternative to condition (a) in terms of the mean counting function.

\begin{proof}[Proof of Theorem~\ref{thm:sharpiro}]
	We begin by using Lemma~\ref{lem:Jessen}, Theorem~\ref{thm:Mexistgeneral}, and the method of proof of Theorem~\ref{thm:mean}, to establish that
	\begin{equation} \label{eq:sharpiropf}
		\mathscr{M}_\varphi(w) =  \log\left|\frac{\overline{w}+\nu-1}{w-\nu}\right|  + \lim_{\sigma \to 0^+} \int_{\mathbb{T}^\infty} \log |(\Phi_w)_\chi(\sigma)| \, dm_\infty(\chi)
	\end{equation}
	for every $\varphi \in \mathscr{G}_0$ with $\varphi(+\infty)=\nu$ and every $w \in \mathbb{C}_{1/2}\setminus\{\nu\}$. 
	
	Suppose that $\mre{\varphi^\ast(\chi)}=1/2$ for almost every $\chi \in \mathbb{T}^\infty$. Then Corollary~\ref{cor:Phiwinner} and \eqref{eq:sharpiropf} show that
\[\mathscr{M}_\varphi(w) = \log\left|\frac{\overline{w}+\nu-1}{w-\nu}\right|\]
for quasi-every $w \in \mathbb{C}_{1/2}$. 

	Conversely, suppose that $\mathscr{M}_\varphi(w) = \log\big|\frac{\overline{w}+\nu-1}{w-\nu}\big|$ for some $w \in \mathbb{C}_{1/2}$. Since $|(\Phi_w)_\chi (\sigma)| \leq 1$ for every $\chi\in\mathbb{T}^\infty$ and all $\sigma>0$, Fatou's lemma and \eqref{eq:sharpiropf} hence imply that
	\[\int_{\mathbb{T}^\infty} \log |\Phi_w^\ast(\chi)| \, dm_\infty(\chi) \geq \lim_{\sigma \to 0^+} \int_{\mathbb{T}^\infty} \log |(\Phi_w)_\chi(\sigma)| \, dm_\infty(\chi) = 0.\]
	Therefore $|\Phi_w^\ast(\chi)| = 1$ for almost every $\chi$, that is, $\Phi_w$ is inner. This is equivalent to the fact that $\mre \varphi^\ast(\chi) = 1/2$ for almost every $\chi \in \mathbb{T}^\infty$.
\end{proof}

We close this section by noting that simple modifications of the proofs of Theorems~\ref{thm:mean} and \ref{thm:sharpiro} also yield the following analogue of the classical Littlewood inequality.
\begin{theorem} \label{thm:innertfae}
	Suppose that $f \in \mathscr{H}^\infty$ with $\|f\|_{\mathscr{H}^\infty} \leq 1$ and that $\varphi(+\infty) = \nu$ for some $\nu \in \mathbb{D}$. Then the mean counting function $\mathscr{M}_\varphi$ exists for every $\xi \in \mathbb{D}\setminus\{\nu\}$ and enjoys the point-wise estimate
	\[\mathscr{M}_f(\xi) \leq \log\left|\frac{1-\overline{\xi}\nu}{\xi-\nu}\right|. \]
	Furthermore, the following are equivalent:
	\begin{enumerate}
		\item[(i)] $f$ is inner.
		\item[(ii)] $\mathscr{M}_f(\xi) = \log\big|\frac{1-\overline{\xi}\nu}{\xi-\nu}\big|$ for quasi-every $\xi \in \mathbb{D}$.
		\item[(iii)] $\mathscr{M}_f(\xi) = \log\big|\frac{1-\overline{\xi}\nu}{\xi-\nu}\big|$ for one $\xi \in \mathbb{D}$.
	\end{enumerate}
\end{theorem}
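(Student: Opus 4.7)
The plan is to mirror the strategy used for Theorems~\ref{thm:mean} and \ref{thm:sharpiro}, with the disc automorphisms $\phi_\alpha(z) = (\alpha-z)/(1-\overline{\alpha}z)$ playing the role that the pseudo-hyperbolic map $\Phi_w$ of \eqref{eq:Phiw} played there. For $\xi \in \mathbb{D}\setminus\{\nu\}$, I would introduce the auxiliary Dirichlet series $g_\xi = \phi_\xi \circ f$. Since $\|f\|_{\mathscr{H}^\infty} \leq 1$ and $\phi_\xi$ is an automorphism of $\mathbb{D}$, Lemma~\ref{lem:bohr} gives $g_\xi \in \mathscr{H}^\infty \subset \mathscr{N}_{\operatorname{u}}$ with $\|g_\xi\|_{\mathscr{H}^\infty} \leq 1$; moreover $f(s) = \xi$ if and only if $g_\xi(s) = 0$, so $\mathscr{M}_f(\xi) = \mathscr{M}_{g_\xi}(0)$. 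Since $g_\xi(+\infty) = \phi_\xi(\nu) \neq 0$, Theorem~\ref{thm:Mexistgeneral} delivers the existence of the mean counting function together with the representation
\[\mathscr{M}_f(\xi) = \lim_{\sigma \to 0^+} \mathscr{J}_{g_\xi}(\sigma) - \log|\phi_\xi(\nu)|.\]
Because $|g_\xi| \leq 1$ in $\mathbb{C}_0$, Lemma~\ref{lem:Jessen} yields $\mathscr{J}_{g_\xi}(\sigma) \leq 0$, and so
\[\mathscr{M}_f(\xi) \leq -\log|\phi_\xi(\nu)| = \log\left|\frac{1-\overline{\xi}\nu}{\xi-\nu}\right|,\]
which is the desired Littlewood-type estimate.

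The equivalences follow by examining when this bound is saturated. The implication (ii) $\Rightarrow$ (iii) is trivial. For (iii) $\Rightarrow$ (i), equality at a single $\xi \in \mathbb{D}$ forces $\lim_{\sigma \to 0^+} \mathscr{J}_{g_\xi}(\sigma) = 0$. Rewriting the Jessen function via Lemma~\ref{lem:Jessen} as $\int_{\mathbb{T}^\infty} \log|(g_\xi)_\chi(\sigma)| \, dm_\infty(\chi)$ and invoking the upper semicontinuity at $\sigma = 0$ established in Theorem~\ref{thm:logintdown}, I obtain
\[\int_{\mathbb{T}^\infty} \log|g_\xi^\ast(\chi)| \, dm_\infty(\chi) \geq 0;\]
combined with the pointwise bound $|g_\xi^\ast| \leq 1$ almost everywhere, this forces $|g_\xi^\ast(\chi)| = 1$ almost everywhere. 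Hence $g_\xi$ is inner, and since $\phi_\xi$ is a bijection of $\overline{\mathbb{D}}$ sending $\mathbb{T}$ to $\mathbb{T}$ for $\xi \in \mathbb{D}$, this is equivalent to $f$ being inner. For (i) $\Rightarrow$ (ii), assuming that $f$ is inner, Theorem~\ref{thm:sigmainner} furnishes a set of full capacity in $\mathbb{D}$ on which $\lim_{\sigma \to 0^+} \mathscr{J}_{\phi_\alpha \circ f}(\sigma) = 0$; substituting this into the representation above yields $\mathscr{M}_f(\alpha) = \log\bigl|(1 - \overline{\alpha}\nu)/(\alpha-\nu)\bigr|$ for quasi-every $\alpha \in \mathbb{D}$.

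The only step requiring a genuine ingredient beyond the Möbius reduction is the upper semicontinuity argument in (iii) $\Rightarrow$ (i); this is precisely the boundary case $\sigma = 0$ of Theorem~\ref{thm:logintdown}, which applies to $g_\xi$ because $\mathscr{H}^\infty \subset \mathscr{H}^p$. Everything else is a routine transposition of the half-plane arguments of Theorems~\ref{thm:mean} and \ref{thm:sharpiro} to the disc setting, with $-\log|\phi_\xi(\nu)|$ taking the role of $-\log|\Phi_w(+\infty)|$.
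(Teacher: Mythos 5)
Your proposal is correct and is precisely the ``simple modification'' of the proofs of Theorems~\ref{thm:mean} and \ref{thm:sharpiro} that the paper invokes without writing out: you replace $\Phi_w$ by $\phi_\xi\circ f$, apply Theorem~\ref{thm:Mexistgeneral} and Lemma~\ref{lem:Jessen} for existence and the upper bound, and use Theorem~\ref{thm:sigmainner} (here even more directly than Corollary~\ref{cor:Phiwinner}, since no conformal reparametrization of the parameter is needed) together with the boundary case of Theorem~\ref{thm:logintdown} for the equivalences. The only cosmetic difference is that the paper's argument for (iii) $\Rightarrow$ (i) cites Fatou's lemma directly rather than the upper semicontinuity at $\sigma=0$, but these are the same estimate.
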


\begin{remark}
	In the special case when $f$ is periodic, that is, when $f(s)=\psi(2^{-s})$ for some $\psi \in H^\infty(\mathbb{D})$ with $\psi(0)=\nu$, Theorem~\ref{thm:innertfae} is precisely the Littlewood inequality \eqref{eq:littlewoodineq} and the corresponding result for the Nevanlinna counting function \eqref{eq:nevanlinnadisc} \cite[Sec.~4.2]{Shapiro87}.
\end{remark}

\section{Compact composition operators} \label{sec:compact}
The first step toward the proof of Theorem~\ref{thm:compact} is to deduce the change of variable formula of Theorem~\ref{thm:cov}. Recall that if $\varphi\in\mathscr{G}_0$, then 
\[\mathscr{M}_\varphi(w,\sigma)=\lim_{T\to\infty}\frac{\pi}{T}\sum_{\substack{s \in \varphi^{-1}(\{w\}) \\ \,\,|\mim{s}|<T \\ \sigma<\mre{s}<\infty}}\mre s \]
exists for $w\neq\varphi(+\infty)$ and $\sigma>0$. Moreover, the mean counting function $\mathscr{M}_\varphi(w)$ is obtained by letting $\sigma\to0^+$,
\[\mathscr{M}_\varphi(w) = \lim_{\sigma \to 0^+} \mathscr{M}_\varphi(w,\sigma).\]
Note also that $\mathscr{M}_\varphi(w,\sigma)\to0$ as $\sigma\to\infty$, since $\varphi(s)=w$ lacks solutions for $s$ with sufficiently large real part, seeing as $w\neq\varphi(+\infty)$. To justify taking the limit $T\to\infty$ in the change of variables formula, we will use the uniform estimates of Lemma~\ref{lem:ThetaLsub}.

\begin{proof}[Proof of Theorem~\ref{thm:cov}]
	Fix $\varphi \in \mathscr{G}_0$. Suppose first that $f$ is a Dirichlet polynomial and that $0<\sigma_0<\sigma_1<\infty$. A non-injective change of variables, as in Section~\ref{sec:prelim}, then yields that
	\[\frac{2}{T} \int_{-T}^T \int_{\sigma_0}^{\sigma_1} |(f\circ\varphi)'(s)|^2 \,dt \, \sigma d\sigma = \frac{2}{\pi}\int_{\mathbb{C}_{1/2}} |f'(w)|^2 \,\frac{\pi}{T} \sum_{\substack{s \in \varphi^{-1}(\{w\}) \\ \,\,\,\,|\mim{s}|<T \\ \sigma_0<\mre{s} \leq \sigma_1}}\mre s\,dw.\]
	The dominated convergence theorem together with the estimate \eqref{eq:ThetaLsub2} of Lemma~\ref{lem:ThetaLsub} shows that
	\begin{multline*}
		\lim_{T\to\infty} \frac{2}{\pi}\int_{\mathbb{C}_{1/2}\setminus \mathbb{D}(\varphi(+\infty),\delta)} |f'(w)|^2 \,\frac{\pi}{T} \sum_{\substack{s \in \varphi^{-1}(\{w\}) \\ \,\,\,\,|\mim{s}|<T \\ \sigma_0<\mre{s} \leq \sigma_1}}\mre s\,dw \\
		= \frac{2}{\pi}\int_{\mathbb{C}_{1/2}\setminus \mathbb{D}(\varphi(+\infty),\delta)} |f'(w)|^2 \, \big(\mathscr{M}_\varphi(w,\sigma_0)-\mathscr{M}_\varphi(w,\sigma_1)\big)\,dw
	\end{multline*}
	for every $\delta>0$. To handle the points close to $\varphi(+\infty)$, let $0<\delta< (1/2+ \mre \varphi(+\infty))/2$, so that $\mathbb{D}(\varphi(+\infty),\delta) \subset \mathbb{C}_{1/2}$. For $T > \sigma_1$, we use the estimate \eqref{eq:ThetaLsub1} of Lemma~\ref{lem:ThetaLsub} and a point-wise estimate for $|f'(w)|^2$ to find that
	\begin{multline*}
		\frac{2}{\pi}\int_{\mathbb{D}(\varphi(+\infty),\delta)} |f'(w)|^2 \,\frac{\pi}{T} \sum_{\substack{s \in \varphi^{-1}(\{w\}) \\ \,\,\,\,|\mim{s}|<T \\ \sigma_0<\mre{s}\leq\sigma_1}}\mre s\,dw  \\ \ll \|f\|_{\mathscr{H}^2}^2 \int_{\mathbb{D}(\varphi(+\infty),\delta)} \log \left|\frac{\overline{w}+ \varphi(2T)-1}{w - \varphi(2T)} \right|\,dw,
	\end{multline*}
	where the implied constant only depends on $\mre{\varphi(+\infty)}$. For $T>\sigma_1>0$, it is clear that $\varphi(2T)$ is contained in a bounded subset of $\mathbb{C}_{1/2}$. Hence the integral on the right hand side is bounded by $\delta^2 \log{\delta^{-1}}$ as $\delta \to 0^+$, and we conclude that
	\begin{multline*}
		\lim_{T\to\infty}\frac{2}{T} \int_{-T}^T \int_{\sigma_0}^{\sigma_1} |(f\circ\varphi)'(s)|^2 \,dt \sigma d\sigma \\
		= \frac{2}{\pi}\int_{\mathbb{C}_{1/2}} |f'(w)|^2 \, \big(\mathscr{M}_\varphi(w,\sigma_0)-\mathscr{M}_\varphi(w,\sigma_1)\big)\,dw
	\end{multline*}
We now let $\sigma_1\to \infty$ and $\sigma_0\to 0^+$ in this equation. Since $\sigma_{\operatorname{u}}(f\circ \varphi)\leq0$ we may as in Lemma~\ref{lem:curlyLP} work out the limit of the left hand side by calculating with coefficients. On the right hand side we simply apply the monotone convergence theorem.  The result, as in \eqref{eq:curlyLP}, is that
	\begin{equation*}
		\|\mathscr{C}_\varphi f\|_{\mathscr{H}^2}^2 - |f(\varphi(+\infty))|^2 = \frac{2}{\pi}\int_{\mathbb{C}_{1/2}} |f'(w)|^2 \mathscr{M}_\varphi(w)\,dw.
	\end{equation*}
To extend this formula to general $f \in \mathscr{H}^2$ we approximate with polynomials, helped by the fact that $\mathscr{C}_\varphi \colon \mathscr{H}^2 \to \mathscr{H}^2$ is bounded.
\end{proof}

In order to deduce Theorem~\ref{thm:compact} from Theorem~\ref{thm:cov}, several estimates are required. We begin with the following basic result, which demonstrates that it is only the behavior of the mean counting function $\mathscr{M}_\varphi(w)$ near $\mre{w}=1/2$ which is relevant for the compactness of $\mathscr{C}_\varphi$.

\begin{lemma} \label{lem:weakuniform}
	Suppose that $(f_j)_{j\geq1}$ is a sequence in $\mathscr{H}^2$ such that $\|f_j\|_{\mathscr{H}^2}=1$ for every $j\geq1$ and such that $f_j$ converges weakly to $0$. For every $\theta>1/2$ and $\varepsilon>0$ there is some $J=J(\theta,\varepsilon)$ such that
	\[|f_j(\varphi(+\infty))|^2+\frac{2}{\pi} \int_{\mre{w}\geq \theta} |f_j'(w)|^2 \,\mathscr{M}_\varphi(w)\,dw \leq 2\varepsilon^2, \qquad j \geq J.\]
\end{lemma}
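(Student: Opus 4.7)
The plan is to verify pointwise convergence $f_j(w) \to 0$ and $f_j'(w) \to 0$ on $\mathbb{C}_{1/2}$, to dominate the integrand by a $j$-independent integrable function, and to conclude via the dominated convergence theorem.

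First I would record the basic pointwise convergences. Point evaluation $f \mapsto f(w)$ at $w \in \mathbb{C}_{1/2}$ is a bounded linear functional on $\mathscr{H}^2$ with reproducing kernel $K_w(s) = \zeta(s+\overline{w})$, so weak convergence of $f_j$ to $0$ gives $f_j(\varphi(+\infty)) \to 0$ and hence $|f_j(\varphi(+\infty))|^2 \to 0$. The same applies to the functional $f \mapsto f'(w) = -\sum_n a_n (\log n) n^{-w}$, which is represented by an element of $\mathscr{H}^2$ as long as $\mre w > 1/2$. Thus $f_j'(w) \to 0$ pointwise on $\mathbb{C}_{1/2}$, and by Cauchy–Schwarz we have the uniform pointwise bound
\[|f_j'(w)|^2 \leq \|f_j\|_{\mathscr{H}^2}^2 \sum_{n \geq 2} \frac{(\log n)^2}{n^{2\mre w}} =: g(\mre w),\]
with $g$ finite and decreasing on $(1/2,\infty)$ and decaying like $(\log 2)^2 \, 2^{-2\sigma}$ as $\sigma \to \infty$.

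Next, combining this with Theorem~\ref{thm:mean}, the integrand $|f_j'(w)|^2 \mathscr{M}_\varphi(w)$ on $\{\mre w \geq \theta\}$ is dominated by
\[g(\mre w)\,L(w), \quad \text{where} \quad L(w) := \log\left|\frac{\overline{w}+\nu-1}{w-\nu}\right|, \quad \nu = \varphi(+\infty).\]
To verify that this dominator is integrable, I would split $\{\mre w \geq \theta\}$ along $|w-\nu|\leq 1$ and $|w-\nu|>1$. On the former, $g(\mre w)$ is uniformly bounded (since $\mre w$ stays above $\theta > 1/2$), while $L$ has only an integrable logarithmic singularity at $\nu$. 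On the latter, Lemma~\ref{lem:logest} gives $L(w) \leq \frac{2(\mre w - 1/2)(\mre\nu - 1/2)}{|w-\nu|^2}$; a direct calculation shows that the $t$-integral of $|w-\nu|^{-2}$ over $\{|w-\nu|>1\}$ is bounded uniformly in $\sigma = \mre w$, and the remaining $\sigma$-integral of $g(\sigma)(\sigma-1/2)$ on $[\theta,\infty)$ converges due to the exponential decay of $g$.

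With this integrable majorant, the dominated convergence theorem applies to the pointwise limit $|f_j'(w)|^2 \mathscr{M}_\varphi(w) \to 0$ on $\{\mre w \geq \theta\}$, so
\[\lim_{j \to \infty} \int_{\mre w \geq \theta} |f_j'(w)|^2 \mathscr{M}_\varphi(w)\,dw = 0.\]
Together with $|f_j(\varphi(+\infty))|^2 \to 0$, this lets us choose $J = J(\theta,\varepsilon)$ so that both terms are bounded by $\varepsilon^2$ for $j \geq J$. The main technical point is the integrability of the dominator $g(\mre w)L(w)$; once the neighborhood of $\nu$ is separated from the tail, it rests on the interplay between the rapid decay of $g(\sigma)$ and the $|w-\nu|^{-2}$ bound furnished by Lemma~\ref{lem:logest}.
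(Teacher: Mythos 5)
Your argument is correct, but it takes a different route from the paper. The paper avoids dominated convergence entirely: it splits $f_j'(s)=\sum_{n\leq N}+\sum_{n>N}$, controls the tail uniformly in $j$ by Cauchy--Schwarz (this is where $\theta>1/2$ enters, making the tail small relative to $|e_2'(s)|$ with $e_2(s)=2^{-s}$), and uses weak convergence only to make the finitely many head coefficients small; this yields the uniform estimate $|f_j'(s)|\leq\varepsilon|e_2'(s)|$ on $\mre s\geq\theta$ for $j$ large, whence
\[
\frac{2}{\pi}\int_{\mre w\geq\theta}|f_j'(w)|^2\,\mathscr{M}_\varphi(w)\,dw\leq\varepsilon^2\,\frac{2}{\pi}\int_{\mathbb{C}_{1/2}}|e_2'(w)|^2\,\mathscr{M}_\varphi(w)\,dw\leq\varepsilon^2\|\mathscr{C}_\varphi e_2\|_{\mathscr{H}^2}^2\leq\varepsilon^2,
\]
the point being that Theorem~\ref{thm:cov} applied to the single function $e_2$ absorbs all questions about the integrability of $\mathscr{M}_\varphi$. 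Your route instead establishes pointwise convergence $f_j'(w)\to0$ (weak convergence against the derivative-evaluation kernel, which lies in $\mathscr{H}^2$ precisely because $\mre w>1/2$), the $j$-independent bound $|f_j'(w)|^2\leq g(\mre w)$, and then must verify that $g(\mre w)\,\mathscr{M}_\varphi(w)$ is integrable on $\{\mre w\geq\theta\}$; your verification via the Littlewood-type bound of Theorem~\ref{thm:mean}, the split at $|w-\nu|=1$, and the decay estimate of Lemma~\ref{lem:logest} is sound (the logarithmic singularity at $\nu$ is locally integrable in the plane, and the $|w-\nu|^{-2}$ tail integrates uniformly in $\mre w$ against the exponentially decaying $g$). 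What the paper's approach buys is economy — no separate integrability computation for the majorant, just a reuse of the change-of-variables formula; what yours buys is that it is routine measure theory once the majorant is checked, and it does not need the cleverness of comparing against $|e_2'|$.
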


\begin{proof}
	Since $f_j$ converges weakly to $0$, there is $J=J(\varepsilon)$ such that $|f_j(\varphi(+\infty))|\leq \varepsilon$ for $j\geq J$. Set $e_2(s) = 2^{-s}$. If we can prove that there is some $J=J(\theta,\varepsilon)$ such that
	\begin{equation} \label{eq:e2lol}
		|f_j'(s)| \leq \varepsilon |e_2'(s)|
	\end{equation}
	for $\mre{s}\geq \theta$ and $j\geq J$, then also
	\[\frac{2}{\pi} \int_{\mre{w}\geq \theta} |f_j'(w)|^2 \,\mathscr{M}_\varphi(w)\,dw \leq \frac{2}{\pi} \int_{\mathbb{C}_{1/2}} \varepsilon^2 |e_2'(w)|^2 \mathscr{M}_\varphi(w)\,dw \leq \varepsilon^2 \|\mathscr{C}_\varphi e_2\|_{\mathscr{H}^2}^2 \leq \varepsilon^2.\]
	 Let us therefore prove \eqref{eq:e2lol}. Write $f_j(s) = \sum_{n\geq1} a_n(j) n^{-s}$. By the Cauchy--Schwarz inequality and the fact that $\|f_j\|_{\mathscr{H}^2}=1$ we get for every $N\geq1$ the estimate
	\begin{equation} \label{eq:tailest}
		\left|\sum_{n=N}^\infty a_n(j) (\log{n})n^{-s}\right| \leq \left(\sum_{n=N}^\infty \frac{(\log{n})^2}{n^{2\sigma}}\right)^{\frac{1}{2}}.
	\end{equation}
	Since $|e_2'(s)|= (\log{2}) 2^{-\sigma}$ and since $\theta>1/2$, we see that there is some $N=N(\theta,\varepsilon)$ such that the left hand side of \eqref{eq:tailest} is bounded by $(\varepsilon/2) |e_2'(s)|$ for $\mre{s}\geq \theta$. Since $(f_j)_{j\geq1}$ converges weakly to $0$, there is $J=J(N,\varepsilon)$ such that $|a_n(j)| \leq \varepsilon/(2N)$ holds for $n=1,\ldots,N$ whenever $j\geq J$. We then find that
	\begin{equation} \label{eq:faceest}
		\left|\sum_{n=1}^N a_n(j) (\log{n}) n^{-s} \right| \leq \frac{\varepsilon}{2N} \sum_{n=1}^N \frac{\log{n}}{n^{\sigma}} \leq \frac{\varepsilon}{2} |e_2'(s)|
	\end{equation}
	for $\mre{s} > 1/2$. We obtain the desired estimate \eqref{eq:e2lol} from \eqref{eq:tailest}, \eqref{eq:faceest} and the triangle inequality.
\end{proof}

Next we describe a class of Carleson measures for $\mathscr{H}^2$.

\begin{lemma} \label{lem:carleson}
	Suppose that $\omega \colon [0,\infty) \to [0,\infty)$ is decreasing and in $L^1 \cap L^\infty$. There is an absolute constant $C$ such that the embedding inequality
	\begin{equation} \label{eq:carleson}
		\int_{-\infty}^\infty |f(1/2+it)|^2 \,\omega(|t|)\,dt \leq C\left(\|\omega\|_{L^\infty}+\|\omega\|_{L^1}\right) \|f\|_{\mathscr{H}^2}^2
	\end{equation}
	for every $f \in \mathscr{H}^2$.
\end{lemma}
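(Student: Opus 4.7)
The plan is to reduce the global weighted estimate to the well-known local Carleson-type embedding for $\mathscr{H}^2$: there is an absolute constant $C_0$ such that
\[\int_{\tau}^{\tau+1} |f(1/2+it)|^2\,dt \leq C_0 \|f\|_{\mathscr{H}^2}^2\]
for every $f \in \mathscr{H}^2$ and every $\tau \in \mathbb{R}$. This is a standard result (see e.g.~\cite{HLS97}); the independence of $C_0$ from $\tau$ is immediate from the invariance of the $\mathscr{H}^2$-norm under vertical translation $s \mapsto s-i\tau$, which corresponds to twisting the coefficients $a_n$ by the unimodular character $n\mapsto n^{i\tau}$.

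Given the local embedding, I would partition $\mathbb{R}$ into the unit intervals $I_k = [k, k+1)$, $k\in\mathbb{Z}$, and bound $\omega(|t|)$ on each $I_k$ by its supremum there. Since $\omega$ is decreasing, this supremum is achieved at the endpoint of $I_k$ closest to the origin, and we obtain
\[\int_{-\infty}^{\infty}|f(1/2+it)|^2\omega(|t|)\,dt \leq \sum_{k\in\mathbb{Z}}\Bigl(\sup_{t\in I_k}\omega(|t|)\Bigr)\int_{I_k}|f(1/2+it)|^2\,dt \leq C_0\|f\|_{\mathscr{H}^2}^2\sum_{k\in\mathbb{Z}}\sup_{t\in I_k}\omega(|t|).\]

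It remains to control the sum of suprema. Using monotonicity of $\omega$ and comparing the tail of the sum to an integral,
\[\sum_{k\in\mathbb{Z}}\sup_{t\in I_k}\omega(|t|) \leq 2\omega(0) + 2\sum_{k=1}^{\infty}\omega(k) \leq 2\|\omega\|_{L^\infty} + 2\int_{0}^{\infty}\omega(r)\,dr \leq 2\bigl(\|\omega\|_{L^\infty}+\|\omega\|_{L^1}\bigr).\]
Combining these estimates yields \eqref{eq:carleson} with $C=2C_0$.

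The only substantive input is the local $\mathscr{H}^2$-embedding on a unit interval; once that is quoted, the remainder is a routine decomposition-and-summation argument, so I do not anticipate a genuine obstacle. The only thing to be slightly careful about is the interpretation of $f(1/2+it)$ for $f \in \mathscr{H}^2$, which one may take as a non-tangential limit, or alternatively prove the estimate first for $\mre s=1/2+\varepsilon$ and then let $\varepsilon\to 0^+$ using Fatou's lemma.
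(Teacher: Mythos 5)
Your proof is correct and follows essentially the same route as the paper: both reduce to the local embedding $\int_\tau^{\tau+1}|f(1/2+it)|^2\,dt \leq C_1\|f\|_{\mathscr{H}^2}^2$ from \cite[Thm.~4.11]{HLS97}, decompose $\mathbb{R}$ into unit intervals, use monotonicity of $\omega$ to bound the weight by its value at the endpoint nearest the origin, and compare the resulting sum $\sum_k \omega(k)$ with $\|\omega\|_{L^\infty}+\|\omega\|_{L^1}$. No issues.
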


\begin{proof}
	The local embedding \cite[Thm.~4.11]{HLS97} states that there is a constant $C_1$, independent of $\tau\in\mathbb{R}$, such that
	\begin{equation} \label{eq:localemb}
		\int_{\tau}^{\tau+1} |f(1/2+it)|^2 \,dt \leq C_1 \|f\|_{\mathscr{H}^2}^2.
	\end{equation}
	 Define $\omega_k(t) = \omega(k) \mathbf{1}_{[k,k+1)}(t)$ for $k=0,1,2,\ldots$. Since $\omega$ is decreasing, we find that
	\[\int_0^\infty |f(1/2+it)|^2 \,\omega(t)\,dt \leq \sum_{k=0}^\infty \omega(k) \int_{k}^{k+1} |f(1/2+it)|^2\, dt \leq C_1 \|f\|_{\mathscr{H}^2}^2 \sum_{k=0}^\infty \omega(k)\]
	where $C_1$ is as in \eqref{eq:localemb}. Using again that $\omega$ is decreasing, we find that
	\[\sum_{k=0}^\infty \omega(k) \leq \omega(0) + \int_0^\infty \omega(t)\,dt = \|\omega\|_{L^\infty}+\|\omega\|_{L^1}.\]
	By symmetry we obtain \eqref{eq:carleson} with $C=2C_1$.
\end{proof}

\begin{remark}
	An interesting open problem is to describe the non-negative weights $\omega$ for which there is a finite constant $C(\omega)$ such that
	\[\int_{-\infty}^\infty |f(1/2+it)|^2 \omega(t)\,dt \leq C(\omega) \|f\|_{\mathscr{H}^2}^2.\]
	It is necessary that $\omega \in L^1(\mathbb{R})\cap L^\infty(\mathbb{R})$, as can be seen from for example \cite[Sec.~2]{OS12}.
\end{remark}

We will use the following consequence of Lemma~\ref{lem:carleson} in the proof of Theorem~\ref{thm:compact}. To appreciate its relevance, recall from the Littlewood--type inequality of Theorem~\ref{thm:mean} and the upper bound in Lemma~\ref{lem:logest} (with $\nu=\varphi(+\infty)$) that for every $\varphi \in \mathscr{G}_0$, the counting function $\mathscr{M}_\varphi$ satisfies the estimate
\begin{equation} \label{eq:Llogest}
	\mathscr{M}_\varphi(w) \leq \log\left|\frac{\overline{w}+\varphi(+\infty)-1}{w-\varphi(+\infty)}\right| \leq 2\frac{(\mre{w}-1/2)(\mre{\varphi(+\infty)}-1/2)}{|w-\varphi(+\infty)|^2}
\end{equation}
for $w \in \mathbb{C}_{1/2}\setminus\{\varphi(+\infty)\}$.
\begin{lemma} \label{lem:carleson2}
	Let $\nu \in \mathbb{C}_{1/2}$ and $\delta>0$. Then there is a constant $C=C(\delta)$ such that 
	\[\int_{1/2<\mre{w}<\theta} |f'(w)|^2 \frac{\mre{w}-1/2}{|w-\nu|^{1+\delta}}\,dw \leq \frac{C}{(\mre{\nu}-\theta)^{1+\delta}} \|f\|_{\mathscr{H}^2}^2\]
	for every $f\in\mathscr{H}^2$ and $1/2<\theta<\mre{\nu}$.
\end{lemma}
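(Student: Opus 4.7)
The plan is to reduce the two-dimensional weighted estimate to the one-dimensional Carleson embedding of Lemma~\ref{lem:carleson} by integrating out the imaginary variable first and then assembling the pieces in $\sigma$ via Fubini. Writing $f(s) = \sum_n a_n n^{-s}$, for each fixed $\sigma \in (1/2, \theta)$ I would introduce the auxiliary Dirichlet series
\[g_\sigma(s) = f'\bigl(s + (\sigma - 1/2) + i\mim \nu\bigr),\]
which lies in $\mathscr{H}^2$ since vertical translation is an isometry of $\mathscr{H}^2$ and a horizontal shift by $\sigma - 1/2 > 0$ brings $f'$ into $\mathscr{H}^2$. Computing term-wise,
\[\|g_\sigma\|_{\mathscr{H}^2}^2 = \sum_{n \geq 2} |a_n|^2 (\log n)^2 n^{1 - 2\sigma},\]
and the substitution $u = t - \mim \nu$ identifies $g_\sigma(1/2 + iu) = f'(\sigma + it)$, so the inner $t$-integral becomes a Carleson-type integral on the line $\mre s = 1/2$.

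After this change of variables the inner integral reads $\int_{\mathbb{R}} |g_\sigma(1/2 + iu)|^2 \omega_\sigma(|u|)\,du$, with weight
\[\omega_\sigma(u) = \bigl((\mre \nu - \sigma)^2 + u^2\bigr)^{-(1+\delta)/2}.\]
This weight is even, decreasing in $|u|$, and belongs to $L^1 \cap L^\infty$; rescaling $u = (\mre \nu - \sigma)v$ gives the explicit norms
\[\|\omega_\sigma\|_{L^\infty} = (\mre \nu - \sigma)^{-(1+\delta)}, \qquad \|\omega_\sigma\|_{L^1} = c_\delta (\mre \nu - \sigma)^{-\delta},\]
where $c_\delta = \int_0^\infty (1 + v^2)^{-(1+\delta)/2}\,dv < \infty$. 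Applying Lemma~\ref{lem:carleson} to $g_\sigma$ and $\omega_\sigma$ then yields a pointwise bound on the inner integral in terms of $\|g_\sigma\|_{\mathscr{H}^2}^2$ and the sum $(\mre\nu - \sigma)^{-(1+\delta)} + (\mre\nu - \sigma)^{-\delta}$.

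The final step is to integrate this bound against $(\sigma - 1/2)\,d\sigma$ on $(1/2, \theta)$ and invoke the key identity $\int_{1/2}^\infty (\sigma - 1/2) n^{1 - 2\sigma}\,d\sigma = (2\log n)^{-2}$, which after term-wise integration gives
\[\int_{1/2}^\infty (\sigma - 1/2)\|g_\sigma\|_{\mathscr{H}^2}^2\,d\sigma = \tfrac{1}{4}\sum_{n \geq 2} |a_n|^2 \leq \tfrac{1}{4}\|f\|_{\mathscr{H}^2}^2.\]
For $\sigma \in (1/2, \theta)$ I would use the uniform bounds $(\mre\nu - \sigma)^{-(1+\delta)} \leq (\mre\nu - \theta)^{-(1+\delta)}$ and $(\mre\nu - \sigma)^{-\delta} \leq (\mre\nu - \theta)^{-\delta}$ and collect terms to conclude. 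The main technical point is the bookkeeping required to absorb the $L^1$ contribution, which produces a term of order $(\mre\nu - \theta)^{-\delta}$, into the cleanly stated bound of order $(\mre\nu - \theta)^{-(1+\delta)}$; this is the step where one must be careful that the final constant depends only on $\delta$, rather than on the location of $\nu$.
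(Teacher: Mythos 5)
Your argument is the paper's own proof, essentially line for line: reduce to the vertical-line embedding of Lemma~\ref{lem:carleson} after normalizing away $\mim\nu$ and shifting $f'$ into $\mathscr{H}^2$, compute $\|\omega_\sigma\|_{L^\infty}=(\mre\nu-\sigma)^{-(1+\delta)}$ and $\|\omega_\sigma\|_{L^1}=c_\delta(\mre\nu-\sigma)^{-\delta}$ by scaling, and then integrate against $(\sigma-1/2)\,d\sigma$ using $\int_{1/2}^\infty(\sigma-1/2)n^{1-2\sigma}\,d\sigma=(2\log n)^{-2}$. All of these computations are correct.

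The point you flag at the end is, however, a genuine obstruction and not mere bookkeeping: the $L^1$ contribution $c_\delta(\mre\nu-\theta)^{-\delta}$ can be absorbed into $C(\delta)(\mre\nu-\theta)^{-(1+\delta)}$ only when $\mre\nu-\theta$ is bounded above by a constant depending on $\delta$. For $\mre\nu-\theta$ large the inequality as stated actually fails: take $f(s)=2^{-s}$ and $\nu$ real and large with $\theta$ fixed; then the left-hand side equals $2c_\delta(\log 2)^2\int_{1/2}^{\theta}4^{-\sigma}(\sigma-1/2)(\nu-\sigma)^{-\delta}\,d\sigma\geq c(\theta,\delta)\,\nu^{-\delta}$, while the right-hand side is at most $C(\delta)\,(\nu-\theta)^{-(1+\delta)}$, and the ratio of the two grows like $\nu$. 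So the honest conclusion of your computation is the two-term bound
\[
\int_{1/2<\mre{w}<\theta}|f'(w)|^2\,\frac{\mre{w}-1/2}{|w-\nu|^{1+\delta}}\,dw\leq C(\delta)\left(\frac{1}{(\mre\nu-\theta)^{1+\delta}}+\frac{1}{(\mre\nu-\theta)^{\delta}}\right)\|f\|_{\mathscr{H}^2}^2,
\]
and you should either state the lemma in this form, add the hypothesis $\mre\nu-\theta\leq1$, or allow $C$ to depend on $\mre\nu$. The paper's own proof silently drops the $L^1$ term at the same spot, so this is an imprecision in the statement rather than a defect of your argument; it is harmless in the application to Theorem~\ref{thm:compact}, where $\nu=\varphi(+\infty)$ is fixed, $\mre\nu-\theta\leq\mre\varphi(+\infty)-1/2$, and the implied constants are permitted to depend on $\mre\varphi(+\infty)$. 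Finish your write-up with the two-term bound rather than trying to force the absorption.
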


\begin{proof}
	Since both the $\mathscr{H}^2$-norm and the strip $1/2<\mre{w}<\theta$ are invariant under vertical translations, we may without loss of generality assume that $\nu$ is real. On the vertical line $1/2<\mre{w}=\sigma<\theta$, we apply Lemma~\ref{lem:carleson} with
	\[\omega(t) = \frac{1}{|\sigma+it-\nu|^{1+\delta}} \leq \frac{1}{|\theta+it-\nu|^{1+\delta}}\]
	to the Dirichlet series $g(s)=f'(\sigma-1/2+s)$, yielding that
\[\int_{-\infty}^\infty \frac{|f'(\sigma + it)|^2}{|\sigma+it-\nu|^{1+\delta}} \,dt \ll  \frac{1}{(\mre{\nu}-\theta)^{1+\delta}} \sum_{n=1}^\infty |a_n|^2 \frac{(\log n)^2}{n^{2(\sigma-1/2)}}\]
The proof is completed upon multiplying by $(\sigma - 1/2)$ and integrating in $\sigma$, since
	\[\int_{1/2}^\theta \frac{(\log{n})^2}{n^{2(\sigma-1/2)}}\,(\sigma-1/2)\,d\sigma \leq \int_{1/2}^\infty \frac{(\log{n})^2}{n^{2(\sigma-1/2)}}\,(\sigma-1/2)\,d\sigma = \frac{1}{4}. \qedhere\]
\end{proof}

We are finally ready to proceed with the proof of our last main result. We will rely on Theorem~\ref{thm:cov}, Lemma~\ref{lem:submean}, Lemma~\ref{lem:weakuniform}, and Lemma~\ref{lem:carleson2}.

\begin{proof}[Proof of Theorem~\ref{thm:compact}]
	Let $\varphi \in \mathscr{G}_0$ and suppose that 
		\begin{equation} \label{eq:compact}
	\lim_{\mre w \to \frac{1}{2}^+} \frac{\mathscr{M}_{\varphi}(w)}{\mre w - 1/2} = 0.
	\end{equation}
	We need to show that $\mathscr{C}_\varphi$ is compact. Fix some $0<\delta<1$. We begin with following claim. For every $\varepsilon>0$ there is some $1/2<\theta \leq(1/2+\mre{\varphi(+\infty)})/2$ such that if $1/2 <\mre{w}<\theta$, then
	\begin{equation} \label{eq:claim}
		\mathscr{M}_\varphi(w) \leq \varepsilon^2 \frac{(\mre{w}-1/2)}{|w-\varphi(+\infty)|^{1+\delta}}.
	\end{equation}
	Suppose that the claim does not hold. Then we can find a constant $c>0$ and a sequence $(w_j)_{j\geq1}$ in $\mathbb{C}_{1/2}$ such that $\mre{w_j}\to 1/2^+$ and
	\begin{equation} \label{eq:notclaim}
		\mathscr{M}_\varphi(w_j) \geq c \frac{(\mre{w_j}-1/2)}{|w_j-\varphi(+\infty)|^{1+\delta}}.
	\end{equation}
	If $(\mim{w_j})_{j\geq1}$ is bounded, this is in contradiction with \eqref{eq:compact}. On the other hand, if $(\mim{w_j})_{j\geq1}$ is unbounded, then \eqref{eq:notclaim} contradicts the Littlewood--type inequality \eqref{eq:Llogest}, since $0<\delta<1$. Consequently, \eqref{eq:claim} holds.
	
	We will now apply the claim. The implied constants below depend only on $\mre{\varphi(+\infty)}$ and $\delta$. For every $f \in \mathscr{H}^2$, we get from \eqref{eq:claim} that
	\[\frac{2}{\pi} \int_{1/2 < \mre{w}< \theta} |f'(w)|^2 \,\mathscr{M}_\varphi(w)\,dw \ll \varepsilon^2 \int_{1/2<\mre{w}<\theta} |f'(w)|^2\, \frac{(\mre{w}-1/2)}{|w-\varphi(+\infty)|^{1+\delta}}\,dw.\]
	Using Lemma~\ref{lem:carleson2} with $\nu=\varphi(+\infty)$ and $1/2<\theta \leq(1/2+\mre{\varphi(+\infty)})/2<\mre{\nu}$, we conclude that
	\begin{equation} \label{eq:Cfinal}
		\frac{2}{\pi} \int_{1/2 < \mre{w}< \theta} |f'(w)|^2 \,\mathscr{M}_\varphi(w)\,dw \ll \varepsilon^2 \|f\|_{\mathscr{H}^2}^2.
	\end{equation}
	Suppose now that $(f_j)_{j\geq1}$ is a sequence in $\mathscr{H}^2$ such that $\|f_j\|_{\mathscr{H}^2}=1$ for every $j\geq1$ and such that $f_j$ converges weakly to $0$. To see that $\mathscr{C}_\varphi$ is compact, we need to show that $\|\mathscr{C}_\varphi f_j\|_{\mathscr{H}^2}\to 0$ as $j \to \infty$. Fix $\varepsilon>0$ and let $\theta=\theta(\varepsilon)>1/2$ be such that \eqref{eq:Cfinal} holds. By Lemma~\ref{lem:weakuniform} we can find $J=J(\theta,\varepsilon)$ such that
	\[|f_j(\varphi(+\infty))|^2 + \frac{2}{\pi} \int_{\mre{w}\geq \theta} |f_j'(w)|^2 \,\mathscr{M}_\varphi(w)\,dw \leq 2\varepsilon^2\]
	whenever $j\geq J$. Combining this with \eqref{eq:Cfinal} we find that 
	\[|f_j(\varphi(+\infty))|^2 +\frac{2}{\pi} \int_{\mathbb{C}_{1/2}} |f_j'(w)|^2 \,\mathscr{M}_\varphi(w)\,dw \ll \varepsilon^2\]
	for $j\geq J$. Hence, by Theorem~\ref{thm:cov}, we conclude that $\|\mathscr{C}_\varphi f_j\|_{\mathscr{H}^2} \ll \varepsilon$ when $j\geq J$ and consequently $\mathscr{C}_\varphi$ is compact.
	
	Suppose now that $\mathscr{C}_\varphi$ is compact. We need to show that \eqref{eq:compact} holds. Let $(w_j)_{j\geq1}$ be any sequence in $\mathbb{C}_{1/2}$ such that $\mre w_j \to 1/2^+$ as $j \to \infty$. We may without loss of generality assume that 
	\begin{equation} \label{eq:recond}
		\mre{w_j} \leq \frac{1/2+\mre{\varphi(+\infty)}}{2}.
	\end{equation}
	Set $r_j = (\mre{w_j}-1/2)/2$. The condition \eqref{eq:recond} ensures that $\varphi(+\infty)$ is uniformly bounded away from the discs $\mathbb{D}(w_j,r_j)\subset \mathbb{C}_{1/2}$. By Lemma~\ref{lem:submean}, we therefore have
	\begin{equation} \label{eq:meanrj}
		\frac{\mathscr{M}_\varphi(w_j)}{\mre{w_j}-1/2} \leq \frac{1}{2\pi r_j^3} \int_{\mathbb{D}(w_j,r_j)} \mathscr{M}_\varphi(s)\,ds.
	\end{equation}
	The normalized reproducing kernel of $\mathscr{H}^2$ at $w \in \mathbb{C}_{1/2}$ is 
	\[K_w(s) = \frac{\zeta(s+\overline{w})}{\sqrt{\zeta(2\mre{w})}}.\]
	The sequence $(K_{w_j})_{j\geq1}$ converges weakly to $0$ in $\mathscr{H}^2$. Since $\zeta(s)=(s-1)^{-1}+E(s)$ for an entire function $E$, there is a constant $C>0$ such that
	\[|K_{w_j}'(s)|^2 \geq \frac{C}{r_j^3}\] 
	for every $s \in \mathbb{D}(w_j,r_j)$. Inserting this estimate into \eqref{eq:meanrj} and extending the integral to $\mathbb{C}_{1/2}$, we find by Theorem~\ref{thm:cov} that
	\[\frac{\mathscr{M}_\varphi(w_j)}{\mre{w_j}-1/2} \leq \frac{1}{2\pi C} \int_{\mathbb{D}(w_j,r_j)} |K_{w_j}'(s)|^2 \mathscr{M}_\varphi(s)\,ds \leq \frac{1}{4C} \|\mathscr{C}_\varphi K_{w_j} \|_{\mathscr{H}^2}^2.\]
	Furthermore, by the assumption of compactness, we have that $\mathscr{C}_\varphi K_{w_j} \to 0$ in $\mathscr{H}^2$ as $j \to \infty$. It follows that 
	\[\lim_{j\to\infty} \frac{\mathscr{M}_\varphi(w_j)}{\mre{w_j}-1/2} = 0,\]
	and, since $(w_j)_{j \geq 1}$ was arbitrary, that \eqref{eq:compact} holds.
\end{proof}

\section{Concluding remarks and further work} \label{sec:further}

\subsection{Interchanging the limits} \label{subsec:limswap}
The question left unaddressed in this paper is whether it is possible to interchange the limits $T \to \infty$ and $\sigma \to 0^+$ in the definition \eqref{eq:meancounting} of the mean counting function.
\begin{problem} 
	Let $f \in \mathscr{N}_{\operatorname{u}}$. Is it true that
	\[\mathscr{M}_f(w) = \lim_{T\to\infty} \frac{\pi}{T} \sum_{\substack{s \in f^{-1}(\{w\}) \\ \,\,|\mim{s}|<T \\ 0<\mre{s}<\infty}} \mre{s}\]
	for every $w \neq f(+\infty)$?
\end{problem}
In the same way that we proved \eqref{eq:limswap}, it is easy to see that the answer is affirmative if $f$ is of the form $f(s) = \psi(2^{-s})$ for some $\psi$ belonging to the usual Nevanlinna class $N$ of $\mathbb{D}$. By the work in Section~\ref{sec:mean}, the answer to the problem is also yes if $\sigma_{\operatorname{u}}(f) < 0$. We also note that inequality \eqref{eq:ThetaLsub1} of Lemma~\ref{lem:ThetaLsub} shows that
\[ \varlimsup_{T\to\infty} \frac{\pi}{T} \sum_{\substack{s \in \varphi^{-1}(\{w\}) \\ \,\,|\mim{s}|<T \\ 0<\mre{s}<\infty}} \mre{s} \leq C \log \left|\frac{\overline{w}+ \varphi(+\infty)-1}{w - \varphi(+\infty)} \right|\]
for every $\varphi \in \mathscr{G}_0$ and $w \neq \varphi(+\infty)$, where $C$ is an absolute constant.

\subsection{Composition operators in Schatten classes} \label{subsec:schatten} 
Let $T$ be a bounded linear operator on a Hilbert space $H$. Define the $n$th approximation number $a_n(T)$ as the distance in the operator norm from $T$ to the operators of rank $<n$. Then $T$ is compact if and only if $a_n(T)\to 0$ as $n\to\infty$. The Schatten class $S_p$ consists of the operators $T\colon H\to H$ such that 
\[\|T\|_{S_p}^p = \sum_{n=1}^\infty (a_n(T))^p<\infty.\]

The approximation numbers of composition operators $\mathscr{C}_\varphi \colon \mathscr{H}^2\to \mathscr{H}^2$ have been investigated in the series of papers \cite{Bayart03,BB17,FQV04,QS15} and the survey \cite{Queffelec15}. The relevant results of \cite{Bayart03} and \cite{FQV04} mainly pertain to the membership of $\mathscr{C}_\varphi$ to the Hilbert--Schmidt class $S_2$ for certain polynomial symbols $\varphi$. The paper \cite{QS15} contains general lower bounds on the decay of $a_n(\mathscr{C}_\varphi)$, incorporating a counting function that is not of mean type. This leads to fairly precise estimates for the approximation numbers of composition operators generated by affine symbols
\[\varphi(s) = c_1 + \sum_{j=1}^d c_j p_j^{-s}.\]
In \cite[Sec.~8]{BB17} more general polynomial symbols are also considered within the framework developed in \cite{QS15}. We mention only \cite[Cor.~4]{BB17} which states that for every pair $0<p<q<\infty$, there is a polynomial symbol $\varphi$ such that $\mathscr{C}_\varphi\in S_q\setminus S_p$.
 
Recall that if $(x_n)_{n\geq1}$ is an orthonormal basis for the Hilbert space $H$ and $T\colon H\to H$ is a bounded linear operator, then
\begin{equation} \label{eq:hsnorm}
	\|T\|_{S_2}^2 = \sum_{n=1}^\infty \|T x_n\|_H^2.
\end{equation}
We have the following corollary of Theorem~\ref{thm:cov}, which describes the Hilbert--Schmidt norm of a composition operator on $\mathscr{H}^2$ in terms of $\mathscr{M}_\varphi$.
\begin{corollary} 
	Suppose that $\varphi\in\mathscr{G}_0$. Then
	\begin{equation} \label{eq:hilbertschmidt}
		\|\mathscr{C}_\varphi\|_{S_2}^2 = \zeta(2\mre \varphi(+\infty))+\frac{2}{\pi}\int_{\mathbb{C}_{1/2}}\zeta''(2\mre{w})\,\mathscr{M}_\varphi(w)\,dw,
	\end{equation}
	where $\zeta''(s) = \sum_{n\geq1} (\log{n})^2 n^{-s}$.
\end{corollary}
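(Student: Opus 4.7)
The plan is to apply the Stanton-type formula of Theorem~\ref{thm:cov} term-by-term to the standard orthonormal basis of $\mathscr{H}^2$ and then interchange summation and integration. Recall that $\{e_n\}_{n\geq1}$, where $e_n(s) = n^{-s}$, is an orthonormal basis of $\mathscr{H}^2$, so by \eqref{eq:hsnorm} we have
\[\|\mathscr{C}_\varphi\|_{S_2}^2 = \sum_{n=1}^\infty \|\mathscr{C}_\varphi e_n\|_{\mathscr{H}^2}^2.\]

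First I would compute the two ingredients entering Theorem~\ref{thm:cov} for $f = e_n$. Clearly $|e_n(\varphi(+\infty))|^2 = n^{-2\mre \varphi(+\infty)}$, and since $e_n'(w) = -(\log n) n^{-w}$ we obtain $|e_n'(w)|^2 = (\log n)^2 n^{-2\mre w}$. Theorem~\ref{thm:cov} thus yields
\[\|\mathscr{C}_\varphi e_n\|_{\mathscr{H}^2}^2 = n^{-2\mre \varphi(+\infty)} + \frac{2}{\pi}\int_{\mathbb{C}_{1/2}} (\log n)^2 n^{-2\mre w}\,\mathscr{M}_\varphi(w)\,dw.\]

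Next, I would sum over $n \geq 1$. Since every term under the sum and integral sign is non-negative, Tonelli's theorem justifies the interchange of the series and the integral without any convergence assumption. The constant terms sum to
\[\sum_{n=1}^\infty n^{-2\mre \varphi(+\infty)} = \zeta(2\mre \varphi(+\infty)),\]
while the integrand in the second term becomes
\[\sum_{n=1}^\infty (\log n)^2 n^{-2\mre w} = \zeta''(2\mre w),\]
by definition of $\zeta''$ as given in the statement. Combining, we arrive at \eqref{eq:hilbertschmidt}.

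There is essentially no obstacle: if $\mre \varphi(+\infty) > 1/2$, both sides are finite and the identity is direct; if $\mre \varphi(+\infty) = 1/2$, then $\zeta(2\mre \varphi(+\infty)) = +\infty$, and the formula correctly asserts that $\mathscr{C}_\varphi$ is not Hilbert--Schmidt (consistent with the fact that $\|\mathscr{C}_\varphi e_1\|_{\mathscr{H}^2}^2 \geq 1$ but more importantly $\sum_n n^{-2\mre \varphi(+\infty)}$ already diverges from the point-evaluation contributions). The case $n = 1$ is harmless since $\log 1 = 0$ makes the integral contribution vanish while $1^{-2\mre \varphi(+\infty)} = 1 = \|\mathscr{C}_\varphi e_1\|_{\mathscr{H}^2}^2$.
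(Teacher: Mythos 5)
Your proof is correct and follows essentially the same route as the paper: apply Theorem~\ref{thm:cov} to each basis vector $e_n(s)=n^{-s}$, sum via \eqref{eq:hsnorm}, and interchange summation and integration (justified by non-negativity). The only superfluous part is the discussion of $\mre\varphi(+\infty)=1/2$, which cannot occur since $\varphi(+\infty)\in\mathbb{C}_{1/2}$ for every $\varphi\in\mathscr{G}_0$.
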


\begin{proof}
	We want to use \eqref{eq:hsnorm} with $H=\mathscr{H}^2$ and $x_n(s) = n^{-s}$. By Theorem~\ref{thm:cov},
\begin{align*}	
\|\mathscr{C}_\varphi\|_{S_2}^2 &= \sum_{n=1}^\infty \|\mathscr{C}_\varphi x_n\|_{\mathscr{H}^2}^2 \\
&= \zeta(2\mre \varphi(+\infty)) +\sum_{n=2}^\infty \,\frac{2}{\pi}\int_{\mathbb{C}_{1/2}} \,\left|(-\log{n}) n^{-w}\right|^2 \,\mathscr{M}_\varphi(w)\,dw,
\end{align*}
	which yields \eqref{eq:hilbertschmidt} after changing the order of summation and integration.
\end{proof}

It would be interesting to know if the techniques of the present paper can be applied to the following.

\begin{problem} 
	Describe the symbols $\varphi\in\mathscr{G}_0$ such that $\mathscr{C}_\varphi\in S_p$.
\end{problem}

For composition operators on $H^2(\mathbb{D})$, membership in Schatten classes has been classified in terms of the Nevanlinna counting function by Luecking and Zhu \cite{LZ92}. Their result states that if $\phi \colon \mathbb{D}\to\mathbb{D}$ is an analytic function, then the composition operator generated by $\phi$ on $H^2(\mathbb{D})$ is in $S_p$ if and only if 
\begin{equation} \label{eq:LZ}
	\int_{\mathbb{D}} \frac{(N_\phi(\xi))^{p/2}}{(1-|\xi|^2)^{p/2+2}}\,d\xi < \infty,
\end{equation}
where $N_\phi$ denotes the Nevanlinna counting function \eqref{eq:nevanlinnadisc}. Since
\[\zeta''(2\mre{w}) = \frac{1}{4(\mre{w}-1/2)^3}+O(1)\]
for, say, $1/2<\mre{w}<3/2$, we see that the integrability condition on $N_\phi$ in \eqref{eq:LZ} with $p=2$ is analogous to the (local) integrability condition on $\mathscr{M}_\varphi$ in \eqref{eq:hilbertschmidt}.

\subsection{Compactness in case (b) of $\mathscr{G}$} \label{subsec:b}
In the present paper, we have been exclusively focused on symbols associated with case (a) of the Gordon--Hedenmalm class $\mathscr{G}$. For the purpose of comparison with Theorem~\ref{thm:compact}, we close the paper by compiling some known results for case (b). We recall that the symbols in this case are of the form
\[\varphi(s)= c_0 s + \varphi_0(s),\]
where $c_0$ is a positive integer and either $\varphi_0 \equiv 0$ or $\varphi_0$ is a Dirichlet series mapping $\mathbb{C}_0$ to $\mathbb{C}_0$ with $\sigma_{\operatorname{u}}(\varphi_0)\leq 0$. If $\varphi_0\equiv0$, then $\mathscr{C}_\varphi$ is an isometry, and so this possibility will not be considered further.

A key difference between the cases (a) and (b), is that the symbol $\varphi$ is not almost periodic when $c_0\geq1$. In analogy with the Nevanlinna counting function \eqref{eq:nevanlinnadisc}, Bayart \cite{Bayart03} introduced the counting function
\begin{equation} \label{eq:nevanlinnac0}
	N_\varphi(w) = \sum_{s \in \varphi^{-1}(\{w\})} \mre{s}.
\end{equation}
for $\varphi\in\mathscr{G}$ with $c_0\geq1$. By use of conformal mappings from $\mathbb{C}_0$ to $\mathbb{D}$, it was established in \cite[Prop.~3]{Bayart03} that the counting function \eqref{eq:nevanlinnac0} satisfies the estimate
\begin{equation} \label{eq:bayartlittlewood}
	N_\varphi(w) \leq \frac{\mre{w}}{c_0}
\end{equation}
for every $w \in \mathbb{C}_0$. The estimate \eqref{eq:bayartlittlewood} is best compared with the case $\phi(0)=0$ in the Littlewood inequality \eqref{eq:littlewoodineq}, since $\varphi(+\infty)=+\infty$ for every $\varphi$ in $\mathscr{G}$ with $c_0\geq1$. Note that $\mre{w}$ is the distance from $w$ to the imaginary axis.

In analogy with \eqref{eq:compactdisc} and Theorem~\ref{thm:compact}, it seems plausible that the compactness of $\mathscr{C}_\varphi$ on $\mathscr{H}^2$ could be related to the condition that $N_\varphi(w) = o(\mre{w})$ as $\mre{w}\to 0^+$. In the following result, the first statement is from \cite[Thm.~2]{Bayart03} and the second statement is from the proof of \cite[Thm.~6]{Bailleul15}, which in turn relies on \cite[Thm.~3]{Bayart03}.

We say that a Dirichlet series $f(s) = \sum_{n\geq1} a_n n^{-s}$ is finitely generated if the set $\{n\,:\,a_n\neq0\}$ is multiplicatively generated by a finite set of prime numbers.

\begin{theorem}
	Suppose that $\varphi\in\mathscr{G}$ with $c_0\geq1$.
	\begin{enumerate}
		\item[(i)] Suppose that $\mim \varphi_0$ is bounded. If $N_\varphi(w) = o(\mre{w})$ as $\mre{w}\to 0^+$, then $\mathscr{C}_\varphi$ is compact on $\mathscr{H}^2$.
		\item[(ii)] Suppose that $\varphi$ is finitely valent and that $\varphi_0$ is finitely generated. If $\mathscr{C}_\varphi$ is compact on $\mathscr{H}^2$, then $N_\varphi(w) = o(\mre{w})$ as $\mre{w}\to 0^+$.
	\end{enumerate}
\end{theorem}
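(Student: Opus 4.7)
The plan is to mirror the structure of our arguments for case (a), replacing the mean counting function $\mathscr{M}_\varphi$ with the genuine counting function $N_\varphi$ from \eqref{eq:nevanlinnac0}. The cornerstone for both parts would be a Stanton-type formula: when $\varphi \in \mathscr{G}$ has $c_0 \geq 1$ and $\mim \varphi_0$ is bounded, I would establish the change of variables identity
\[\|\mathscr{C}_\varphi f\|_{\mathscr{H}^2}^2 = |a_1|^2 + \frac{2}{\pi c_0}\int_{\mathbb{C}_0} |f'(w)|^2 \, N_\varphi(w)\, dw\]
for $f(s) = \sum_{n\geq 1} a_n n^{-s} \in \mathscr{H}^2$. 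The derivation would proceed as in the proof of Theorem~\ref{thm:cov}: apply Lemma~\ref{lem:curlyLP} to $f \circ \varphi$ and perform a non-injective change of variables. The boundedness of $\mim \varphi_0$ is crucial because it ensures that $\varphi$ sends the strip $\{|\mim s| < T\}$ into a strip $\{|\mim w| \leq c_0 T + M\}$ of comparable width, so no averaging is needed and the sum defining $N_\varphi$ is already convergent for each $w \neq +\infty$.

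Granted this formula, the proof of \textbf{(i)} follows exactly the pattern of Theorem~\ref{thm:compact}. Given a weakly null sequence $(f_j)$ in $\mathscr{H}^2$ with $\|f_j\|_{\mathscr{H}^2} = 1$, the tail $\{\mre w \geq \theta\}$ is controlled as in Lemma~\ref{lem:weakuniform}, since $|f_j'(w)|$ becomes pointwise negligible compared to $|e_2'(w)|$ there. For the contribution near the imaginary axis, the hypothesis $N_\varphi(w) = o(\mre w)$ lets us choose $\theta$ small so that $N_\varphi(w) \leq \varepsilon \, \mre w$ on $\{0 < \mre w < \theta\}$, and the remaining integral $\int |f_j'(w)|^2 \, \mre w \, dw$ over a vertical strip is bounded by $\|f_j\|_{\mathscr{H}^2}^2$ via a Carleson-type embedding in the spirit of Lemma~\ref{lem:carleson2}.

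For \textbf{(ii)}, the plan is to test compactness against the normalized reproducing kernels and deduce the pointwise decay of $N_\varphi$ from the operator-norm decay. The finitely generated hypothesis on $\varphi_0$ lets us reduce to a finite-dimensional polydisc $\mathbb{D}^d$ indexed by the generating primes, where $\mathscr{H}^2$ is isometric to $H^2(\mathbb{D}^d)$ with a suitable weight. In that setting, the finitely valent hypothesis ensures that $N_\varphi$ enjoys a submean value property (analogue of Lemma~\ref{lem:submean}), obtained by transferring the subharmonicity of $w \mapsto \log|f(w) - \alpha|$ across the change of variables. Combining the Stanton formula with the submean inequality applied to reproducing kernels $K_w$ and using that $\mathscr{C}_\varphi K_w \to 0$ in $\mathscr{H}^2$ whenever $\mre w \to 0^+$ (by compactness), one extracts the pointwise bound $N_\varphi(w)/\mre w \to 0$.

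The principal obstacle in \textbf{(i)} is the rigorous derivation of the Stanton formula: without almost periodicity of $\varphi$, we cannot invoke the Jessen--Tornehave machinery, and must instead work directly from Littlewood's inequality \eqref{eq:bayartlittlewood} together with careful conformal mappings of half-strips into $\mathbb{D}$ (analogous to Lemma~\ref{lem:ThetaLsub}) to justify exchanging the limit $T \to \infty$ with the area integral. The principal obstacle in \textbf{(ii)} is that $N_\varphi$ is not automatically subharmonic, and the transference of the submean value property is precisely what forces the finitely valent and finitely generated hypotheses; removing either assumption would require a substantially different argument, and is in fact related to the main open problem in case (b).
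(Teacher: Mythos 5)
First, a point of order: the paper does not prove this theorem at all. It is quoted verbatim from the literature --- part (i) is \cite[Thm.~2]{Bayart03} and part (ii) comes from the proof of \cite[Thm.~6]{Bailleul15} --- so there is no internal proof to compare your proposal against. Your sketch must therefore stand on its own, and it does not.

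The fatal gap is the proposed Stanton formula. As written, the right-hand side
\[|a_1|^2 + \frac{2}{\pi c_0}\int_{\mathbb{C}_0}|f'(w)|^2\,N_\varphi(w)\,dw\]
is infinite for essentially every nonconstant $f$. Take $\varphi(s)=2s+1$ and $f(s)=2^{-s}$: then $N_\varphi(w)=(\mre{w}-1)/2$ for $\mre{w}>1$ and $|f'(w)|^2=(\log 2)^2 4^{-\mre{w}}$ is constant in $\mim{w}$, so the area integral diverges, while $\|\mathscr{C}_\varphi f\|_{\mathscr{H}^2}^2=1/4$. The reason Theorem~\ref{thm:cov} works in case (a) is precisely that $\mathscr{M}_\varphi(w)$ decays like $|\mim{w}|^{-2}$ by \eqref{eq:littlewoodineqM} and Lemma~\ref{lem:logest}; the paper explicitly warns that the Littlewood-type bound \eqref{eq:bayartlittlewood} provides \emph{no} decay of $N_\varphi(w)$ as $|\mim{w}|\to\infty$. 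Any correct change-of-variables identity in case (b) must retain a vertical averaging $\lim_{T}\frac{1}{2T}\int_{|\mim{w}|<c_0T}$ on the right-hand side (the image of the strip $|\mim{s}|<T$ has height $\approx 2c_0T$, and the $1/T$ normalization cannot simply be absorbed into a pointwise counting function). Two further problems: $f'$ is not defined on $\mathbb{C}_0\setminus\mathbb{C}_{1/2}$, where $N_\varphi$ is supported (for $\varphi(s)=s+1/4$ the image of $\mathbb{C}_0$ is $\mathbb{C}_{1/4}$), and Lemma~\ref{lem:curlyLP} cannot be applied to $f\circ\varphi$ because $\sigma_{\operatorname{u}}(f\circ\varphi)\leq 0$ generally fails when $c_0\geq 1$ (unlike case (a), where it follows from the maximum principle and Lemma~\ref{lem:bohr}). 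Since both halves of your argument are routed through this identity, neither is established. For (ii) there is the additional unaddressed difficulty, which the paper points out, that $\mathscr{H}^2$ has no reproducing kernels at points of $\mathbb{C}_0\setminus\mathbb{C}_{1/2}$, which is exactly where $N_\varphi$ must be tested; this is the actual role of the finitely generated hypothesis, not merely a reduction of dimension.
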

The condition that $\mim\varphi_0$ is bounded seems to be primarily technical. Unlike the situation in the proof of Theorem~\ref{thm:compact}, the upper bound of the Littlewood--type inequality \eqref{eq:bayartlittlewood} does not provide additional decay of the counting function $N_\varphi(w)$ as $|\mim{w}|\to \infty$. The requirement that $\varphi_0$ be finitely generated is related to the fact that the reproducing kernels of $\mathscr{H}^2$ do not generally converge in $\mathbb{C}_0$.

In view of Theorem~\ref{thm:compact}, to completely resolve the compactness problem discussed in \cite[Prob.~4]{Hedenmalm04} and \cite[Prob.~3.3]{SS16}, it remains to solve the following.

\begin{problem}
	Classify the symbols $\varphi\in\mathscr{G}$ which generate compact composition operators on $\mathscr{H}^2$ in the case that $c_0\geq1$.
\end{problem}

\bibliographystyle{amsplain} 
\bibliography{compact} 
\end{document}